\newcommand*\xbar[1]{%
	\hbox{%
		\vbox{%
			\hrule height 0.5pt 
			\kern0.5ex
			\hbox{%
				\kern-0.1em
				\ensuremath{#1}%
				\kern-0.1em
			}%
		}%
	}%
}
\numberwithin{equation}{section}
\newtheorem{thm}{Theorem}[section]
\newtheorem{lemma}[thm]{Lemma}
\newtheorem{definition}[thm]{Definition}
\newtheorem{theorem}[thm]{Theorem}
\newtheorem{corollary}[thm]{Corollary}
\newtheorem{proposition}[thm]{Proposition}
\newtheorem{remark}[thm]{Remark}
\def\R{{\rm I\!R}}
\def\tr{{\rm \,tr}}
\def\argmin{\mathop{\rm arg\,min}}
\def\stdCone{{\cal K}}
\def\stdFace{{\cal F}}
\def\dist{{\rm dist}}
\def\rank{{\rm rank}}
\def\Span{{\rm span\,}} 
\def\ri{{\rm ri\,}}
\def\dim{{\rm dim\,}}
\newcommand{\cl}{{\rm cl\,}}
\def\dpps{d_{{\rm PPS}}}
\newcommand\Fr{\mathcal{F}_{{\rm r}}}
\newcommand\Fd{\mathcal{F}_{{\rm d}}}
\newcommand\Fs{\mathcal{F}_{\#}}
\newcommand\Ff{\mathcal{F}_{\infty}}
\newcommand\Fne{\mathcal{F}_{{\rm ne}}^{\#}}
\newcommand{\psdc}{\mathcal{S}_{+}^d}
\newcommand{\pdc}{\mathcal{S}_{++}^d}
\newcommand{\Kld}{\mathcal{K}_{{\rm logdet}}}
\newcommand{\Kldi}{\mathcal{K}_{{\rm logdet}}^1}
\newcommand{\Kldsi}{\mathcal{K}_{{\rm logdet}}^{*1}}
\newcommand{\Kldii}{\mathcal{K}_{{\rm logdet}}^2}
\newcommand{\Kldie}{\mathcal{K}_{{\rm logdet}}^{1\mathrm{e}}}
\newcommand{\Kldsie}{\mathcal{K}_{{\rm logdet}}^{*1\mathrm{e}}}
\newcommand{\Kldsii}{\mathcal{K}_{{\rm logdet}}^{*2}}
\newcommand{\ldt}{\log \det}
\DeclareRobustCommand{\properideal}{\mathrel{\text{$\m@th\proper@ideal$}}}
\newcommand{\proper@ideal}{%
	\ooalign{$\lneq$\cr\raise.22ex\hbox{$\lhd$}\cr}%
}
\title{Tight error bounds for log-determinant cones without constraint qualifications}
\date{\today}
\author{
	Ying Lin\thanks{Department of Applied Mathematics, the Hong Kong Polytechnic University, Hong Kong, People's Republic of China. E-mail: \href{ying.lin@connect.polyu.hk}{ying.lin@connect.polyu.hk}.}
	\and 
	Scott B.\ Lindstrom\thanks{
		Centre for Optimisation and Decision Science, Curtin University, Australia.
		E-mail: \href{scott.lindstrom@curtin.edu.au}{scott.lindstrom@curtin.edu.au}.}
	\and
	Bruno F. Louren\c{c}o\thanks{Department of Statistical Inference and Mathematics, Institute of Statistical Mathematics, Japan.
		This author was supported partly by the JSPS  Grant-in-Aid for Early-Career Scientists  23K16844 and the Grant-in-Aid for Scientific Research (B)21H03398.
		Email: \href{bruno@ism.ac.jp}{bruno@ism.ac.jp}.}
	\and
	Ting Kei Pong\thanks{
		Department of Applied Mathematics, the Hong Kong Polytechnic University, Hong Kong, People's Republic of China.
		This author was supported partly by the Hong Kong Research Grants Council PolyU153001/22p.
		E-mail: \href{tk.pong@polyu.edu.hk}{tk.pong@polyu.edu.hk}.
	}
}
\begin{document}
\maketitle

\begin{abstract}
 In this paper, without requiring any constraint qualifications, we establish tight error bounds for the log-determinant cone, which is the closure of the hypograph of the perspective function of the log-determinant function.
  This error bound is obtained using the recently developed framework based on one-step facial residual functions.
\end{abstract}

\noindent\textbf{Keywords:} error bounds, facial residual functions, log-determinant cone

\section{Introduction}\label{sec:introduction}

The \textit{convex conic feasibility problem} has attracted a lot of attention due to its power in modeling convex problems.
Specifically, a convex conic feasibility problem admits the following form:
\begin{equation}\label{eq:conic-feasibility-problem} \tag{Feas}
	\text{Find } \quad \bm{x} \in (\mathcal{L} + \bm{a}) \cap \mathcal{K},
\end{equation}
where \( \mathcal{K} \) is a closed convex cone contained in a finite dimensional Euclidean space \( \mathcal{E} \), \( \mathcal{L} \subseteq \mathcal{E} \) is a subspace and \( \bm{a} \in \mathcal{E} \) is given.
Various aspects of \eqref{eq:conic-feasibility-problem} such as numerical algorithms and applications have been studied in the literature; see e.g., \cite{BB96,HM11}.
Here we focus on the theoretical aspects, particularly \textit{error bounds} for \eqref{eq:conic-feasibility-problem}.
To be more precise, assuming the feasibility of \eqref{eq:conic-feasibility-problem}, we want to establish inequalities that give upper bounds on the distance from an arbitrary point to \( (\mathcal{L} + \bm{a}) \cap \mathcal{K} \) based on the individual distances from the point to \( \mathcal{L} + \bm{a} \) and \( \mathcal{K} \).
As a fundamental topic in optimization \cite{HF52,LP98,LT93,Pang97,ZS17}, error bounds possess a wide range of applications, especially in algorithm design and convergence analysis.

In this paper, we consider \eqref{eq:conic-feasibility-problem} with \( \mathcal{K} = \Kld \) being the \textit{log-determinant cone} defined as
\[
	\Kld \coloneqq \left\{ (x, y, Z) \in \R \times \R_{++} \times \pdc : x \leq y \ldt (Z / y) \right\} \cup ( \R_{-} \times \{0\} \times \psdc ),
\]
where \( d \geq 1 \), \( \R_{++} \) is the positive orthant, \( \psdc \) (resp., $\pdc$) is the set of \( d \times d \) positive semidefinite (resp., positive definite) matrices.
We note that the log-determinant cone is the closure of the hypograph of the perspective function of the log-determinant function.

The log-determinant function has both theoretical and practical importance.
It is a \textit{self-concordant barrier function} for \( \psdc \), and hence it is useful for defining the \textit{logarithmically homogeneous self-concordant barrier functions (LHSCBs)} for various matrix cones.
LHSCBs are crucial for complexity analysis of the celebrated \textit{primal-dual interior point methods} for solving conic feasibility problems; see, e.g., \cite{NN94, CKV21}.
In practice, the log-determinant function appears frequently in countless real-world applications, especially in the area of machine learning, to name but a few, the sparse inverse covariance estimation \cite{FHT08}, the fused multiple graphical Lasso problem \cite{YLSWY15, ZZST21}, Gaussian process \cite{RW06, RH05}, sparse covariance selection \cite{D72, ABE08}, finding minimum-volume ellipsoids \cite{AST08, T16, VR09}, the determinantal point process \cite{KT12}, kernel learning \cite{BBFG23}, D-optimal design \cite{A69, BV04} and so on.

An elementary observation is that
\[
t \leq \log \det(Z) , Z \in \pdc \Longleftrightarrow (t,1,Z) \in \Kld,
\]
in this way, a problem that has a log-determinant term in its objective can be recast as a problem over the log-determinant cone $\Kld$. In view of the importance and prevalence of the log-determinant function, the cone $\Kld$ can also be used to handle numerous applications.

That said, if one wishes to use conic linear optimization to solve problems involving
log-determinants, it is not strictly necessary to use $\Kld$.
Indeed, it is possible, for example, to consider a reformulation
using positive semidefinite cones and exponential cones, e.g., \cite[Section 6.2.3]{MC2020}.

A natural question then is whether it is more advantageous to use a reformulation or
handle $\Kld$ directly.
Indeed, Hypatia implements the log-determinant cone as a predefined \textit{exotic cone} \cite{CKV21} and their numerical experiments show that the direct use of the log-determinant cone gives numerical advantages compared to the use of reformulations, see \cite{CKV23} and \cite[Sections~8.4.1, 8.4.2]{CKV22}.
One reason that other formulations may be less efficient is that they increase the dimension of the problem. Another drawback is that they do not capture the geometry of the hypograph of the log determinant function as tightly.


Motivated by these results, we present a study of the facial structure of $\Kld$ and its properties in connection to feasibility problems as in  \eqref{eq:conic-feasibility-problem}.

Specifically, we deduce tight error bounds for \eqref{eq:conic-feasibility-problem} with \( \mathcal{K} = \Kld \) by deploying a recently developed framework \cite{LiLoPo20, LiLoPo21}, which is based on the \textit{facial reduction algorithm} \cite{BW81, Pa13, WM13} and \textit{one-step facial residual functions} \cite[Definition 3.4]{LiLoPo20}.
This framework has been used with success to develop concrete error bounds for symmetric cones \cite{L17}, exponential cones \cite{LiLoPo20}, $p$-cones \cite{LiLoPo21} and power cones \cite{LLLP23}.
Although the log-determinant cone is a high-dimensional generalization of the exponential cone, whose error bounds were studied in depth in \cite{LiLoPo20}, the derivation of the error bounds for the log-determinant cone is not straightforward.
Indeed, the exponential cone is three dimensional and so its facial structure can be visualized explicitly. 
In contrast, with a higher dimension, the log-determinant cone has a more involved facial structure. 

This paper is organized as follows.
In Section \ref{sec:preliminaries}, we recall notation and preliminaries.
In Section \ref{sec:log-det-cone}, we develop tight error bounds for \eqref{eq:conic-feasibility-problem} with \( \mathcal{K} = \Kld \).

\section{Notation and preliminaries}\label{sec:preliminaries}

In this paper, we will use lowercase letters to represent real scalars, bold-faced letters to denote vectors (including ``generalized'' vectors such as \( \bm{x} \in \Kld \), which consists of real scalars and a matrix), capital letters to denote matrices and curly capital letters for spaces, subspaces or cones.

Let \( \mathcal{E} \) be a finite dimensional Euclidean space, and \( \R_{+} \) and \( \R_{++} \) be the set of nonnegative and positive real numbers, respectively.
For a real number \( x \), we denote that \( (x)_+ := \max \{x , 0\} \).
The inner product on \( \mathcal{E} \) is denoted by \( \langle \cdot , \cdot  \rangle  \) and the induced norm is denoted by \( \| \cdot \|  \).
With the induced norm, for any \( \bm{x} \in \mathcal{E} \) and a closed convex set \( \Omega \subseteq \mathcal{E} \), we denote the projection of \( \bm{x} \) onto \( \Omega \) by \( P_\Omega(\bm{x}) := \argmin _{\bm{y} \in \Omega} \| \bm{x} - \bm{y} \|  \) and the distance between \( \bm{x} \) and \( \Omega \) by \( \dist(\bm{x}, \Omega) := \inf_{\bm{y} \in \Omega} \| \bm{x} - \bm{y} \| = \| \bm{x} - P_\Omega(\bm{x}) \|  \).
For any \( \eta \geq 0 \), we denote the ball centered at \( \bm{x}_0 \) with radius \( \eta  \) by \( B(\bm{x}_0; \eta) := \{\bm{x} \in \mathcal{E} \,: \, \| \bm{x} - \bm{x}_0\| \leq \eta \} \).
For notational simplicity, we will use \( B(\eta) \) to denote the ball centered at \( \bm{0} \) with radius \( \eta  \).
Meanwhile, we will denote the orthogonal complement of \( \Omega \) by \( \Omega^{\perp } \coloneqq \left\{ \bm{v} \in \mathcal{E} \,: \,\langle \bm{x}, \bm{v} \rangle = 0 \,\,\,\,\forall \,\bm{x} \in \Omega \right\} \).

\subsection{Matrices}\label{subsec:matrices}
We use \( \R^{m\times n} \) to denote the set of all real \( m \times n \) matrices and $\mathcal{S}^d$ to denote the set of symmetric $d\times d$ matrices.
The \( n \times n \) identity matrix will be denoted by \( I_n \).
Let \( \psdc \) and \( \pdc \) be the set of symmetric \( d \times d \) \textit{positive semidefinite matrices} and \( d \times d \) \textit{positive definite matrices} respectively.
The interior of \( \psdc \) is \( \pdc \).
We write \( X \succ 0 \) (resp., \( X \succeq 0 \)) if \( X \in \pdc \) (resp., \( X \in \psdc \)).
For any \( X \in \mathcal{S}^d \), we let \( \lambda _i(X) \in \R \) denote the \( i \)-th \textit{eigenvalue} of \( X \) such that \( \lambda _d(X) \geq \lambda _{d-1}(X) \geq \dots \geq \lambda _1(X) \).
We will use \( \lambda _{\max }(X) \) and \( \lambda _{\min }(X) \) to denote the maximum and minimum eigenvalues of \( X \), respectively.
The \textit{rank} of \( X \) is defined by the number of non-zero eigenvalues, denoted by \( \rank (X) \).
The \textit{trace} (resp., \textit{determinant}) of \( X \) is defined by \( \tr(X) := \sum_{i=1}^d \lambda _i (X)  \) (resp., \( \det(X) := \prod_{i=1}^d \lambda _i(X)  \)).
With these, we recall that the Frobenius inner product on \( \mathcal{S}^d \) is given by \( \langle X, Y \rangle := \tr(XY) \) for any \( X, Y \in \mathcal{S}^d \), and the Frobenius norm is \( \| X \| _F := \sqrt{\tr(X^2)} \).
For any \( X \in \psdc  \) (resp., \( X \in \pdc  \)), we have \( \lambda _i(X) \geq 0 \) (resp., \( \lambda _i(X) > 0 \)).
We hence also have for any \( X, Y \in \psdc \) that
\begin{equation}\label{eqn:traceproperties}
	\!\tr(XY) \geq \lambda_{\min}(Y)\tr(X) \geq 0 \ \text{and moreover}, \ \tr(XY) = 0 \iff  XY=0.\!\!
\end{equation}

For a given non-zero positive semidefinite matrix, the next result connects its determinant with its trace and rank.

\begin{lemma}\label{lemma:rel-det-tr-Z-rank}
	Let \( Z \in \psdc \setminus \{\bm{0}\} \). Then for any \( \eta > 0 \), there exists \( C>0 \) so that
	\begin{equation}\label{eq:rel-det-tr-Z-rank}
		(\det (R))^{\frac{1}{d}}\le C [\tr(RZ)]^{\frac{\rank(Z)}{d}} \quad\quad \forall  R \in B(\eta) \cap \psdc .
	\end{equation}
\end{lemma}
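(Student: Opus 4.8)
The plan is to reduce to the case where $Z$ is diagonal and then combine Hadamard's inequality with the AM--GM inequality. First, observe that for any orthogonal $Q$ the map $R \mapsto Q^\top R Q$ preserves $\psdc$, preserves the Frobenius norm (hence maps $B(\eta)$ onto itself), preserves the determinant, and satisfies $\tr\bigl((Q^\top R Q)Z\bigr) = \tr\bigl(R(QZQ^\top)\bigr)$. Diagonalizing $Z = QDQ^\top$ and replacing $R$ by $Q^\top R Q$, we may therefore assume without loss of generality that $Z = \mathrm{diag}(\mu_1,\dots,\mu_r,0,\dots,0)$ with $r = \rank(Z) \ge 1$ and $\mu_i > 0$; set $\mu := \min_{1\le i\le r}\mu_i > 0$.

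Next, for $R = (R_{ij}) \in B(\eta)\cap\psdc$ I would invoke Hadamard's inequality for positive semidefinite matrices, $\det(R) \le \prod_{i=1}^d R_{ii}$, and split the product as $\bigl(\prod_{i=1}^r R_{ii}\bigr)\bigl(\prod_{i=r+1}^d R_{ii}\bigr)$. Since each diagonal entry satisfies $0 \le R_{ii} \le \|R\|_F \le \eta$, the second factor is at most $\eta^{d-r}$. For the first factor, AM--GM gives $\prod_{i=1}^r R_{ii} \le \bigl(\tfrac1r\sum_{i=1}^r R_{ii}\bigr)^r$, while $\mu\sum_{i=1}^r R_{ii} \le \sum_{i=1}^r \mu_i R_{ii} = \tr(RZ)$ (with $\tr(RZ)\ge 0$ by \eqref{eqn:traceproperties}), so $\prod_{i=1}^r R_{ii} \le (r\mu)^{-r}[\tr(RZ)]^r$. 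Combining the two bounds yields $\det(R) \le \eta^{d-r}(r\mu)^{-r}[\tr(RZ)]^{\rank(Z)}$, and taking $d$-th roots gives \eqref{eq:rel-det-tr-Z-rank} with $C := \eta^{(d-r)/d}(r\mu)^{-r/d} > 0$.

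In the borderline case $\tr(RZ) = 0$, \eqref{eqn:traceproperties} forces $RZ = 0$, hence $R_{ii} = 0$ for $i \le r$, so Hadamard's inequality gives $\det(R) = 0$ and \eqref{eq:rel-det-tr-Z-rank} holds trivially; thus this case needs no separate constant. I do not expect a serious obstacle here: the only points needing care are the invariance argument legitimizing the reduction to diagonal $Z$, the uniformity of $C$ over all $R \in B(\eta)\cap\psdc$, and — if the paper wants to be self-contained — supplying the one-line proof of Hadamard's inequality via $\det R = \det(D^{1/2})\det(D^{-1/2}RD^{-1/2})\det(D^{1/2})$ with $D = \mathrm{diag}(R_{11},\dots,R_{dd})$ (when all $R_{ii}>0$) together with AM--GM applied to the eigenvalues of the unit-diagonal matrix $D^{-1/2}RD^{-1/2}$.
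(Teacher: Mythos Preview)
Your proof is correct and yields the same constant $C = \eta^{1-\mathsf{r}/d}(\mathsf{r}\sigma_{\mathsf{r}})^{-\mathsf{r}/d}$ as the paper, but the key inequality is obtained differently. After the common diagonalization step, the paper bounds $\det(R) = \prod_i \lambda_i(Q^\top R Q)$ by splitting off the $d-\mathsf{r}$ largest eigenvalues (each $\le\eta$) and applying AM--GM to the $\mathsf{r}$ smallest; it then invokes Cauchy's interlacing theorem to compare $\sum_{i=1}^{\mathsf{r}}\lambda_i(Q^\top RQ)$ with $\tr([Q^\top RQ]_{\mathsf{r}})$, which is in turn bounded by $\sigma_{\mathsf{r}}^{-1}\tr(RZ)$. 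You instead work entirely with diagonal entries via Hadamard's inequality, which makes the link to $\tr(RZ)=\sum_{i\le \mathsf{r}}\mu_i R_{ii}$ immediate and sidesteps the interlacing argument altogether. Your route is slightly more elementary and self-contained (Hadamard for PSD matrices has the one-line proof you sketch), while the paper's route stays in the eigenvalue language throughout; neither offers a sharper constant. Your separate treatment of the case $\tr(RZ)=0$ is harmless but unnecessary, since in that case $R_{ii}=0$ for $i\le \mathsf{r}$ already falls out of $\sum_{i\le\mathsf{r}}\mu_iR_{ii}=0$ and the main chain of inequalities reads $0\le 0$.
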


\begin{proof}
	Let \( Z = Q \Sigma Q^{\top}  \) be an eigendecomposition of \( Z \), where \( Q \) is orthogonal and \( \Sigma \) is diagonal, and let \( \mathsf{r} \) be the rank of \( Z \). Then \( \mathsf{r} \geq 1 \) since \( Z \neq 0 \).
	Without loss of generality, we may suppose that the first \( \mathsf{r} \) diagonal entries of \( \Sigma  \), denoted as \( \sigma_1, \sigma _2, \dots ,\sigma _{\mathsf{r}} \), are nonzero and are arranged in descending order.
	Then \( \sigma _{\mathsf{r}} \) is the smallest positive eigenvalue of \( Z \) and we have for any \( R \in B(\eta) \cap \psdc  \) that
	\begin{equation}\label{eq:aux1-proof-lemma-rel-det-tr}
    \begin{split}
      \tr(RZ) &= \tr(RQ\Sigma Q^{\top} ) = \tr(Q^{\top} R Q\Sigma) = \tr([Q^{\top} R Q]_{\mathsf{r}} [\Sigma]_{\mathsf{r}}) \\
      & \overset{(\text{a})}{\geq } \sigma _{\mathsf{r}} \tr([Q^{\top} R Q]_{\mathsf{r}}) \overset{(\text{b})} \geq \sigma _{\mathsf{r}} \sum_{i=1}^{\mathsf{r}} \lambda _i(Q^{\top} RQ),
    \end{split}
	\end{equation}
	where \( [A]_{\mathsf{r}} \) is the submatrix of \( A \) formed by \( A_{ij} \) for \( 1 \leq i,j \leq \mathsf{r} \), (a) holds since \( [Q^{\top} RQ]_{\mathsf{r}} \succeq 0 \) (thanks to \( R \succeq 0 \)), (b) is true because of the interlacing theorem (see~\cite[Theorem 4.3.8]{HJ90}).

	Next, note that we have for any \( R \in B(\eta) \cap \psdc  \) that
	\begin{equation}\label{eqn:detlambda}
    \begin{split}
      \det(R) & = \det(Q^{\top} RQ) = \prod_{i=1}^d\lambda _i(Q^{\top} RQ) \stackrel{\text{(a)}}{\leq} \eta ^{d-\mathsf{r}} \prod_{i=1}^{\mathsf{r}}\lambda _i(Q^{\top} RQ) \\
      &\stackrel{\text{(b)}}{\leq} \eta ^{d-\mathsf{r}}\left( \frac{1}{\mathsf{r}} \sum_{i=1}^{\mathsf{r}}\lambda _i(Q^{\top} RQ)  \right)^{\mathsf{r}},
    \end{split}
	\end{equation}
	where (a) holds because
	\begin{enumerate}[(i)]
		\item \( \forall \,i = 1, 2, \dots, d,\, \lambda _i(Q^{\top} RQ) = \lambda _i (R) \) since \( Q \) is orthogonal.
		\item \( R \in B(\eta) \cap \psdc  \implies \| R \| _F = \sqrt{\tr(R^2)} \leq \eta \implies \forall \, i = 1, 2, \dots, d,\, \lambda _i(R) \leq \eta \).
	\end{enumerate}
	and (b) comes from the AM-GM inequality.
	Combining \eqref{eqn:detlambda} with~\eqref{eq:aux1-proof-lemma-rel-det-tr} gives
	\[ (\det(R))^{\frac{1}{d}} \leq \eta ^{1 - \frac{\mathsf{r}}{d}} \cdot \left( \frac{1}{\mathsf{r}} \sum_{i=1}^{\mathsf{r}} \lambda _i(Q^{\top} RQ)  \right)^{\frac{\mathsf{r}}{d}} \leq \eta ^{1-\frac{\mathsf{r}}{d}} \cdot \left( \frac{1}{\mathsf{r}\sigma _{\mathsf{r}}} \tr(RZ) \right)^{\frac{\mathsf{r}}{d}} \]
  whenever \( R \in B(\eta) \cap \psdc \).
	Hence, we see that~\eqref{eq:rel-det-tr-Z-rank} holds with \( C = \eta ^{1 - \frac{\mathsf{r}}{d}}(\mathsf{r} \sigma_{\mathsf{r}})^{-\frac{\mathsf{r}}{d}} \).
\end{proof}

\subsection{Error bounds for conic feasibility problems}\label{subsec:error-bounds-feas}
We first recall the definition of error bounds.

\begin{definition}[Error bounds~\cite{LL22, Pang97}]\label{def:error-bounds}
Suppose \eqref{eq:conic-feasibility-problem} is feasible. We say that \eqref{eq:conic-feasibility-problem} satisfies an error bound with a residual function \( r:\R_+ \to \R_+ \) if for every bounded set \( B \subseteq \mathcal{E} \), there exists a constant \( c_B > 0 \) such that
	\[
    \dist(\bm{x}, \stdCone \cap (\mathcal{L} + \bm{a})) \leq c_B r \left( \max \left\{\dist(\bm{x}, \stdCone), \dist(\bm{x}, \mathcal{L} + \bm{a})\right\} \right)  \quad\quad \forall \bm{x} \in B.
  \]
\end{definition}
We remark that typically it is required that $r$  satisfy  $r(0) = 0$, be nondecreasing  and be right-continuous at $0$. Under these conditions, the error bound in Definition~\ref{def:error-bounds} can be understood in the context of \textit{consistent error bound functions}, see \cite[Definition~3.1]{LL22} and this footnote.\footnote{For $B_b = B(b)$ (where $B(b)$ is the ball centered at the origin with radius $b$), if Definition~\ref{def:error-bounds} holds, then $c_{B_b}$ can be taken to be a nondecreasing function of $b$ (since considering a larger constant still preserves the error bound inequality). In this way, the function $\Phi: \R_{+}\times \R_+ \to \R_+$ given by
$\Phi(a,b) \coloneqq c_{B_b}r(a)$ satisfies \cite[Definition~3.1]{LL22}, provided that $r$ has the aforementioned properties.}
With different residual functions, we will have different error bounds, among which the Lipschitzian and H\"olderian error bounds are most widely studied in the literature.

Particularly, we say that \eqref{eq:conic-feasibility-problem} satisfies a \textit{uniform H\"olderian error bound} with exponent \( \gamma  \in (0, 1] \) if  Definition~\ref{def:error-bounds} holds with $r = (\cdot)^\gamma$ for every bounded set $B$.
That is, for every bounded set \( B \subseteq \mathcal{E} \), there exists a constant \( \kappa _B > 0 \) such that
\[
\dist(\bm{x}, \stdCone \cap (\mathcal{L} + \bm{a})) \!\leq\! \kappa _B \max\left\{\dist(\bm{x}, \stdCone), \dist(\bm{x}, \mathcal{L} + \bm{a})\right\}^{\gamma },
\]
for all \( \bm{x} \in B \).
If \( \gamma  = 1 \), then the error bound is said to be \textit{Lipschitzian}. H\"olderian error bounds are a particular case of a {consistent error bound}, see \cite[Theorem~3.5]{LL22}.

Let \( \mathcal{K} \) be a closed convex cone contained in \( \mathcal{E} \) and \( \mathcal{K}^{*} \) be its dual cone.
We will denote the boundary, relative interior, linear span, and dimension of \( \mathcal{K} \) by \( \partial \mathcal{K}, \ri\mathcal{K}, \Span \mathcal{K} \) and \( \dim\mathcal{K} \), respectively.
If \( \mathcal{K} \cap -\mathcal{K} = \{\bm{0}\} \), then \( \mathcal{K} \) is said to be \textit{pointed}.
If \( \mathcal{F} \subseteq  \mathcal{K} \) is a face of \( \mathcal{K} \), i.e., for any \( \bm{x}, \bm{y} \in \mathcal{K} \) such that \( \bm{x} + \bm{y} \in \mathcal{F} \), we have \( \bm{x}, \bm{y} \in \mathcal{F} \), then we write \( \mathcal{F} \unlhd \mathcal{K} \).\footnote{By convention, we only consider nonempty faces.}
If further \( \mathcal{F} = \mathcal{K} \cap \{\bm{n}\}^{\perp } \) for some \( \bm{n} \in \mathcal{K}^{*} \), we say that \( \mathcal{F} \) is an \textit{exposed face} of \( \mathcal{K} \).
A face \( \mathcal{F} \) is said to be \textit{proper} if \( \mathcal{F} \neq \mathcal{K} \), and we denote it by \( \mathcal{F} \properideal \mathcal{K} \).
If \( \mathcal{F} \) is proper and \( \mathcal{F} \neq \mathcal{K} \cap - \mathcal{K} \), then \( \mathcal{F} \) is said to be a \textit{nontrivial} face of \( \mathcal{K} \).

The \textit{facial reduction} algorithm~\cite{BW81,Pa13,WM13} and the \textit{FRA-poly} algorithm~\cite{LMT18} play important roles in making full use of the facial structure of a cone; see also~\cite[Section 3]{LiLoPo20}.
More precisely, assuming \eqref{eq:conic-feasibility-problem} is feasible, the facial reduction algorithm aims at finding the minimal face that contains the feasible region and satisfies some \textit{constraint qualification}.
One of the most commonly used constraint qualification is the so-called \textit{partial-polyhedral Slater (PPS) condition} \cite[Definition 3]{L17}.
For \eqref{eq:conic-feasibility-problem}, if \( \mathcal{K} \) and \( \mathcal{L} + \bm{a} \) satisfy the PPS condition, then a Lipschitzian error bound holds for \( \mathcal{K} \) and \( \mathcal{L} + \bm{a} \); see \cite[Corollary~3]{BBL99} and the discussion preceding \cite[Proposition~2.3]{LiLoPo20}.
Thanks to this property, we can apply the facial reduction algorithm to deduce the error bounds based on the \textit{one-step facial residual function} \cite[Definition 3.4]{LiLoPo20} without requiring any constraint qualifications, as in the framework developed recently in \cite{L17}; see also \cite{LiLoPo20, LiLoPo21}.
This framework is highly inspired by the fundamental work of Sturm on error bound for LMIs, see \cite{St00}.
For the convenience of the reader, we recall the definition of the one-step facial residual function as follows.
\begin{definition}[One-step facial residual function (\( \mathds{1} \)-FRF)]\label{def:one-step-facial-residual-functions}
	Let \( \mathcal{K} \) be a closed convex cone and \( \bm{n} \in \mathcal{K}^{*} \).
	Suppose that \( \psi_{\mathcal{F}, \bm{n}} :\R_{+} \times \R_{+} \to \R_{+} \) satisfies the following properties:
	\begin{enumerate}[(i)]
		\item \( \psi _{\mathcal{F}, \bm{n}} \) is nonnegative, nondecreasing in each argument and it holds that \( \psi _{\mathcal{F}, \bm{n}} (0, t) = 0 \) for every \( t \in \R_{+} \).
		\item The following implication holds for any \(\bm{x} \in \Span\mathcal{K} \) and \( \epsilon \geq 0 \):
		      \[ \dist(\bm{x}, \mathcal{K}) \leq \epsilon , \langle \bm{x}, \bm{n} \rangle  \leq \epsilon  \implies \dist(\bm{x}, \mathcal{K} \cap \{\bm{n}\}^{\perp }) \leq \psi _{\mathcal{K}, \bm{n}} (\epsilon, \| \bm{x} \| ). \]
	\end{enumerate}
	Then \( \psi _{\mathcal{F}, \bm{n}} \) is said to be a one-step facial residual function (FRF) for \( \mathcal{K} \) and \( \bm{n} \).
\end{definition}
The one-step facial residual function is used in each step of the facial reduction algorithm to connect a face and its subface until a face \( \mathcal{F} \) is found such that \( \mathcal{F} \) and \( \mathcal{L} + \bm{a} \) satisfy the PPS condition.
Then the error bound for \( \mathcal{K} \) and \( \mathcal{L} + \bm{a} \) can be obtained as a special composition of those one-step facial residual functions.
Due to the importance of the PPS condition in this framework, we shall define the \textit{distance to the PPS condition} of a \textbf{feasible} \eqref{eq:conic-feasibility-problem}, denoted by \( \dpps(\mathcal{K}, \mathcal{L} + \bm{a}) \), as the length \textit{minus one} of the \textit{shortest} chain of faces (among those chains constructed as in \cite[Proposition~5]{L17}) such that the PPS condition holds for the final face in the chain and \( \mathcal{L} + \bm{a} \).

Before ending this subsection, we present a lemma and a proposition that will help simplify our subsequent analysis.
\begin{lemma}[Formula of \( \| \bm{w} - \bm{u} \| \)]\label{lemma:norm-wk-uk}
	Let \( \mathcal{K} \) be a closed convex cone and \( \bm{n} \in \partial \mathcal{K}^{*} \setminus \{\bm 0\} \) be such that \( \mathcal{F}:=\{\bm{n}\}^{\perp } \cap \mathcal{K} \) is a nontrivial exposed face of \( \mathcal{K} \).
	Let \( \eta > 0 \) and let \( \bm{v} \in \partial \mathcal{K} \cap B(\eta) \setminus \mathcal{F} , \bm{w} = P_{\{\bm{n}\}^{\perp } }(\bm{v}), \bm{u} = P_{\mathcal{F}} (\bm{w})\) and \( \bm{w} \neq \bm{u} \).
	Then, we have
	\begin{equation}
		\label{eq:form-square-norm-w-u}
		\| \bm{w} - \bm{u} \|^2 = \| \bm{v} - \bm{u} \|^2 - \| \bm{w} - \bm{v} \|^2,
	\end{equation}
	and,
	\begin{equation}\label{eq:norm-wk-uk}
		\| \bm{w} - \bm{u} \| \leq \| \bm{v} - \bm{u} \| = \dist(\bm{v}, \mathcal{F}).
	\end{equation}
\end{lemma}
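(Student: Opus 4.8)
\emph{Proof idea.} The plan is to exploit the single structural fact that the residual $\bm{v}-\bm{w}$ of projecting $\bm{v}$ onto the hyperplane $\{\bm{n}\}^{\perp}$ is a scalar multiple of $\bm{n}$, whereas $\bm{w}$, $\bm{u}$, and indeed all of $\mathcal{F}$ lie inside $\{\bm{n}\}^{\perp}$. Both displayed identities then reduce to the Pythagorean theorem applied to the orthogonal decomposition $\bm{v}-\bm{u}=(\bm{v}-\bm{w})+(\bm{w}-\bm{u})$.

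Concretely, I would first record the elementary projection formula $\bm{w}=\bm{v}-\frac{\langle \bm{v},\bm{n}\rangle}{\|\bm{n}\|^{2}}\bm{n}$, so that $\bm{v}-\bm{w}\in\Span\{\bm{n}\}$. Since $\mathcal{F}=\{\bm{n}\}^{\perp}\cap\mathcal{K}\subseteq\{\bm{n}\}^{\perp}$ and $\bm{w}=P_{\{\bm{n}\}^{\perp}}(\bm{v})\in\{\bm{n}\}^{\perp}$, both $\bm{u}\in\mathcal{F}$ and $\bm{w}$ belong to the subspace $\{\bm{n}\}^{\perp}$, hence $\bm{w}-\bm{u}\in\{\bm{n}\}^{\perp}$ and therefore $\langle \bm{w}-\bm{u},\,\bm{v}-\bm{w}\rangle=0$. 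The Pythagorean theorem applied to $\bm{v}-\bm{u}=(\bm{v}-\bm{w})+(\bm{w}-\bm{u})$ then yields $\|\bm{v}-\bm{u}\|^{2}=\|\bm{v}-\bm{w}\|^{2}+\|\bm{w}-\bm{u}\|^{2}$, which is exactly \eqref{eq:form-square-norm-w-u} after rearranging; in particular $\|\bm{w}-\bm{u}\|\le\|\bm{v}-\bm{u}\|$, giving the inequality in \eqref{eq:norm-wk-uk}.

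It remains to identify $\|\bm{v}-\bm{u}\|$ with $\dist(\bm{v},\mathcal{F})$, i.e., to show $\bm{u}=P_{\mathcal{F}}(\bm{v})$. For an arbitrary competitor $\bm{y}\in\mathcal{F}$ I would run the same orthogonality argument: $\bm{w}-\bm{y}\in\{\bm{n}\}^{\perp}$ is orthogonal to $\bm{v}-\bm{w}$, so $\|\bm{v}-\bm{y}\|^{2}=\|\bm{v}-\bm{w}\|^{2}+\|\bm{w}-\bm{y}\|^{2}$; thus minimizing $\|\bm{v}-\bm{y}\|$ over $\bm{y}\in\mathcal{F}$ is equivalent to minimizing $\|\bm{w}-\bm{y}\|$, whose unique minimizer over $\mathcal{F}$ is $P_{\mathcal{F}}(\bm{w})=\bm{u}$. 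Hence $\dist(\bm{v},\mathcal{F})=\|\bm{v}-\bm{u}\|$. There is no genuine obstacle here; the only point deserving a moment's care is this ``two-stage projection'' step — projecting first onto the hyperplane $\{\bm{n}\}^{\perp}$ that contains $\mathcal{F}$, and then onto $\mathcal{F}$, recovers $P_{\mathcal{F}}(\bm{v})$ — and it is settled by precisely the same Pythagorean decomposition, using only $\mathcal{F}\subseteq\{\bm{n}\}^{\perp}$ and the normality of the hyperplane residual. The hypotheses $\bm{v}\in\partial\mathcal{K}\cap B(\eta)\setminus\mathcal{F}$ and $\bm{w}\neq\bm{u}$ play no role in the identities themselves and merely fix the context in which the lemma is later invoked.
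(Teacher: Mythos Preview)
Your proposal is correct and follows essentially the same approach as the paper: both arguments rest on the Pythagorean identity $\|\bm{v}-\widetilde{\bm{u}}\|^{2}=\|\bm{v}-\bm{w}\|^{2}+\|\bm{w}-\widetilde{\bm{u}}\|^{2}$ for any $\widetilde{\bm{u}}\in\{\bm{n}\}^{\perp}$, obtained from $\bm{v}-\bm{w}\in\Span\{\bm{n}\}$. The only cosmetic difference is that the paper verifies $\bm{u}=P_{\mathcal{F}}(\bm{v})$ by contradiction, whereas you argue directly that minimizing $\|\bm{v}-\bm{y}\|$ and $\|\bm{w}-\bm{y}\|$ over $\bm{y}\in\mathcal{F}$ are equivalent; the content is identical.
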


\begin{proof}
	Since \( \bm{w} = P_{\{\bm{n}\}^{\perp }}(\bm{v}) \), we have
	\[ \bm{w} = \bm{v} - \frac{\langle \bm{n}, \bm{v} \rangle }{\| \bm{n} \| ^2}\bm{n} \quad \text{ and } \quad \| \bm{w} - \bm{v} \| = \frac{| \langle \bm{n}, \bm{v} \rangle  | }{\| \bm{n} \| }.   \]
	Moreover, we can notice that \( \bm{w} \perp \bm{n} \) and \( \bm{u} \perp \bm{n} \).

	Now, for any \( \widetilde{\bm{u}} \in \{\bm{n}\}^{\perp } \), we have
	\begin{align*}
		                        & \| \bm{w} - \widetilde{\bm{u}} \|^2 = \left\| \bm{v} - \frac{\langle \bm{n}, \bm{v} \rangle }{\| \bm{n} \| ^2} \bm{n} - \widetilde{\bm{u}}  \right\|^2  =  \left\| \bm{v} - \frac{\langle \bm{n}, \bm{v} \rangle }{\| \bm{n} \| ^2}\bm{n}  \right\| ^2 - 2\left\langle \bm{v} - \frac{\langle \bm{n}, \bm{v} \rangle }{\| \bm{n} \| ^2} \bm{n}, \widetilde{\bm{u}}  \right\rangle + \|\widetilde{\bm{u}}\|^2 \\
		\overset{(\text{a})}{=} & \| \bm{v} \| ^2 - 2\frac{\langle \bm{n}, \bm{v} \rangle }{\| \bm{n} \| ^2} \langle \bm{n}, \bm{v} \rangle + \frac{\langle \bm{n}, \bm{v} \rangle ^2}{\| \bm{n} \| ^2}  - 2\langle \bm{v}, \widetilde{\bm{u}} \rangle + \| \widetilde{\bm{u}} \| ^2 = \| \bm{v} \| ^2 - \frac{\langle \bm{n}, \bm{v} \rangle^2 }{\| \bm{n} \| ^2} - 2\langle \bm{v}, \widetilde{\bm{u}} \rangle + \| \widetilde{\bm{u}} \| ^2 \\
		=                       & \| \bm{v} - \widetilde{\bm{u}} \| ^2 - \| \bm{w} - \bm{v} \| ^2,
	\end{align*}
	where (a) comes from the fact that \( \widetilde{\bm{u}} \perp \bm{n} \).
  This proves \eqref{eq:form-square-norm-w-u} upon letting \( \widetilde{\bm{u}} = \bm{u} \).

	The above display implies that for any \( \widetilde{\bm{u}} \in \{\bm{n}\}^{\perp }  \), \( \bm{v}, \bm{w}, \widetilde{\bm{u}} \) are three vertices of a right-angled triangle with \( \angle \bm{w} \) being the right angle.
	This fact also leads us to the observation that \( \bm{u} = P_{\mathcal{F}} (\bm{v}) \).
	Indeed, suppose not, then there exists \( \widehat{\bm{u}} = P_{\mathcal{F}} (\bm{v}) \) with \( \widehat{\bm{u}} \neq \bm{u} \) such that \( \| \widehat{\bm{u}} - \bm{v} \| < \| \bm{u} - \bm{v} \| \).
	Then \( \widehat{\bm{u}} \in \{\bm{n}\}^{\perp } \) and hence \( \bm{v}, \bm{w}, \widehat{\bm{u}} \) form a new right-angled triangle.
	Thus,
	\[ \| \bm{w} - \widehat{\bm{u}} \|^2 = \| \bm{v} - \widehat{\bm{u}} \|^2 - \| \bm{w} - \bm{v} \|^2 < \| \bm{u} - \bm{v} \| ^2 - \| \bm{w} - \bm{v} \| ^2 = \| \bm{w} - \bm{u} \|^2 . \]
	Since \( \widehat{\bm{u}} \in \mathcal{F} \), the above display contradicts the fact that \( \bm{u} = P_{\mathcal{F}} (\bm{w}) \).
	Therefore, \( \bm{u} = P_{\mathcal{F}}(\bm{v}) \) and \( \| \bm{u} - \bm{v} \| = \dist(\bm{v}, \mathcal{F}) \).
\end{proof}

The next proposition states an error bound result related to the positive semidefinite cone.
We present a proof based on the results in~\cite{L17}, although it can also be obtained from Sturm's error bound in \cite{St00}.
\begin{proposition}[Error bound for positive semidefinite cones]\label{prop:psd-error-bound}
	Let \( Z \in \psdc \setminus \{\bm{0}\} \) and \( \eta > 0 \), then there exists \( C_P > 0 \) such that
	\begin{equation}\label{eq:psd-error-bound}
		\dist(Y, \psdc \cap \{Z\}^{\perp }) \leq C_P \tr(YZ)^{\alpha }  \quad\quad \forall Y \in \psdc \cap B(\eta),
	\end{equation}
	where
	\begin{equation}\label{eq:alpha-psd-cone-error-bound}
		\alpha :=
		\begin{cases}
			\frac{1}{2} & \text{if } \rank(Z) < d, \\
			1           & \text{otherwise}.
		\end{cases}
	\end{equation}
\end{proposition}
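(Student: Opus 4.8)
The plan is to split according to the two cases in \eqref{eq:alpha-psd-cone-error-bound}, working throughout in the eigenbasis of $Z$. Fix an eigendecomposition $Z = Q\Sigma Q^\top$ with $Q$ orthogonal and $\Sigma = \mathrm{diag}(\sigma_1,\dots,\sigma_{\mathsf{r}},0,\dots,0)$, where $\mathsf{r} = \rank(Z)\ge 1$ and $\sigma_1\ge\dots\ge\sigma_{\mathsf{r}}>0$. When $\mathsf{r}=d$ (so $Z\succ 0$), \eqref{eqn:traceproperties} shows that $\tr(YZ)=0$ forces $Y=\bm{0}$, hence $\psdc\cap\{Z\}^{\perp}=\{\bm{0}\}$; moreover, for any $Y\in\psdc$, the first inequality in \eqref{eqn:traceproperties} together with $\tr(Y)=\sum_i\lambda_i(Y)\ge(\sum_i\lambda_i(Y)^2)^{1/2}=\|Y\|_F$ gives $\dist(Y,\{\bm{0}\})=\|Y\|_F\le\lambda_{\min}(Z)^{-1}\tr(YZ)$, which is \eqref{eq:psd-error-bound} with $\alpha=1$ (in fact without needing the restriction $Y\in B(\eta)$).

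Now suppose $\mathsf{r}<d$. First I would record, using the equivalence in \eqref{eqn:traceproperties}, that $\psdc\cap\{Z\}^{\perp}=\{Y\succeq 0: YZ=0\}=\{Q\,\mathrm{diag}(0,W)\,Q^\top: W\in\mathcal{S}_{+}^{d-\mathsf{r}}\}$. Given $Y\in\psdc\cap B(\eta)$, write $\widetilde Y:=Q^\top Y Q=\begin{pmatrix}A&B^\top\\B&D\end{pmatrix}$ with $A\in\mathcal{S}^{\mathsf{r}}$ and $D\in\mathcal{S}^{d-\mathsf{r}}$; since $Y\succeq 0$ we have $A\succeq 0$, $D\succeq 0$, and every principal $2\times 2$ submatrix of $\widetilde Y$ is positive semidefinite. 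The matrix $\overline Y:=Q\,\mathrm{diag}(0,D)\,Q^\top$ lies in the face, so $\dist(Y,\psdc\cap\{Z\}^{\perp})^2\le\|Y-\overline Y\|_F^2=\|A\|_F^2+2\|B\|_F^2$, and it remains to bound $\|A\|_F^2$ and $\|B\|_F^2$ by a constant multiple of $\tr(YZ)$.

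For the diagonal block, $\tr(YZ)=\tr(\widetilde Y\Sigma)=\sum_{j=1}^{\mathsf{r}}\sigma_j A_{jj}\ge\sigma_{\mathsf{r}}\tr(A)$; since $A\succeq 0$ this yields $\|A\|_F\le\tr(A)\le\sigma_{\mathsf{r}}^{-1}\tr(YZ)$, and moreover $\tr(A)^2\le\tr(Y)\tr(A)\le\sqrt{d}\,\eta\,\sigma_{\mathsf{r}}^{-1}\tr(YZ)$ using $\tr(A)\le\tr(Y)\le\sqrt{d}\,\|Y\|_F\le\sqrt{d}\,\eta$. For the off-diagonal block, each entry $\widetilde Y_{ij}$ with $i>\mathsf{r}\ge j$ obeys $\widetilde Y_{ij}^2\le\widetilde Y_{ii}\widetilde Y_{jj}\le\sqrt{d}\,\eta\cdot\tr(A)$, where the diagonal entry $\widetilde Y_{ii}$ (belonging to the block $D$) is bounded by $\tr(Y)\le\sqrt{d}\,\eta$ and the diagonal entry $\widetilde Y_{jj}$ (belonging to $A$) by $\tr(A)$; summing over the $\mathsf{r}(d-\mathsf{r})$ such entries gives $\|B\|_F^2\le\mathsf{r}(d-\mathsf{r})\sqrt{d}\,\eta\,\sigma_{\mathsf{r}}^{-1}\tr(YZ)$. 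Combining the three estimates produces \eqref{eq:psd-error-bound} with $\alpha=1/2$ and an explicit $C_P$ depending only on $d$, $\eta$, and $\sigma_{\mathsf{r}}$.

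The step I expect to be the crux is the off-diagonal block: whereas $\|A\|_F$ is controlled \emph{linearly} by $\tr(YZ)$, the entries of $B$ are only accessible through the $2\times 2$-minor inequality $\widetilde Y_{ij}^2\le\widetilde Y_{ii}\widetilde Y_{jj}$, which is exactly what forces the square root, and it is also here that boundedness of $Y$ is genuinely needed (to control the otherwise-free diagonal entries of $D$). As an alternative to this hands-on argument, one could instead simply invoke the error bound for symmetric cones established in \cite{L17} (equivalently, Sturm's error bound for linear matrix inequalities \cite{St00}): $\psdc\cap\{Z\}^{\perp}$ is an exposed face of $\psdc$, it is a proper face precisely when $\rank(Z)<d$, and the cited results deliver a H\"olderian error bound with the stated exponent.
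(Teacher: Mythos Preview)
Your proof is correct, but takes a genuinely different route from the paper's. The paper's argument is essentially a one-line citation: it invokes \cite[Proposition~27, Theorem~37]{L17} to obtain, for all $Y\in B(\eta)$, a bound of the form
\[
\dist(Y,\psdc\cap\{Z\}^{\perp})\leq C_0\max\{\dist(Y,\psdc),\dist(Y,\{Z\}^{\perp})\}^{\alpha},
\]
and then specialises to $Y\in\psdc$, where $\dist(Y,\psdc)=0$ and $\dist(Y,\{Z\}^{\perp})=\tr(YZ)/\|Z\|_F$, giving $C_P=C_0/\|Z\|_F^{\alpha}$. You mention this alternative at the end of your proposal, and indeed that is exactly the paper's chosen path.

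Your main argument, by contrast, is elementary and self-contained: you work directly in the eigenbasis of $Z$, exhibit an explicit approximant $\overline{Y}$ in the face, and bound the block pieces $\|A\|_F^2$ and $\|B\|_F^2$ separately using the $2\times 2$ principal-minor inequality. This buys you an explicit constant $C_P$ in terms of $d$, $\eta$, and the smallest positive eigenvalue $\sigma_{\mathsf{r}}$ of $Z$, and it makes transparent both \emph{why} the exponent drops to $1/2$ (the off-diagonal block $B$ is only controlled through $\widetilde{Y}_{ij}^2\le\widetilde{Y}_{ii}\widetilde{Y}_{jj}$) and \emph{where} the boundedness hypothesis is actually used (to cap the free diagonal entries of $D$). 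The paper's proof is shorter but leaves the constant non-explicit and the mechanism hidden inside the cited results.
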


\begin{proof}
	By~\cite[Proposition 27, Theorem 37]{L17}, there exists \( C_0 > 0 \) such that
	\[ \dist(Y, \psdc \cap \{Z\}^{\perp }) \leq C_0 \max \{\dist(Y, \psdc), \dist(Y, \{Z\}^{\perp })\}^{\alpha } \quad \text{whenever } Y \in B(\eta), \]
	where \( \alpha  \) is defined as in~\eqref{eq:alpha-psd-cone-error-bound}.

	If further \( Y \in \psdc  \), then \( \dist(Y, \psdc) = 0 \); moreover,
	\(\dist(Y, \{Z\}^{\perp }) = \frac{|\tr(YZ) |}{\|Z\|_F} = \frac{\tr(YZ)}{\|Z\|_F}  \).
	Therefore, letting \( C_P := C_0/\|Z\|_F^{\alpha} \), we can obtain~\eqref{eq:psd-error-bound}.
\end{proof}

\section{Error bounds for the log-determinant cones}\label{sec:log-det-cone}
In this section, we will compute the one-step facial residual functions for the log-determinant cones, and obtain error bounds.
Let \( d \) be a positive integer and \( {\rm sd}(d) := \frac{d(d+1)}{2} \) be the dimension of $\mathcal{S}^d$, we consider the \( ({\rm sd}(d) + 2) \)-dimensional space \( \R \times \R \times \mathcal{S}^d \).
We let \( \bm{x} := (\bm{x}_x, \bm{x}_y, \bm{x}_Z) \) denote an element of \( \R \times \R \times \mathcal{S}^d \), where \( \bm{x}_x \in \R, \bm{x}_y \in \R \) and \( \bm{x}_Z \in \mathcal{S}^d \), and equip \( \R \times \R \times \mathcal{S}^d \) with the following inner product:
\[ \langle \bm{x}, \bm{z} \rangle = \bm{x}_x \bm{z}_x + \bm{x}_y \bm{z}_y + \tr (\bm{x}_Z \bm{z}_Z) \quad \text{for any}\quad \bm{x}, \bm{z} \in \R \times \R \times \mathcal{S}^d. \]

Recall that the log-determinant cone is defined as follows.
\begin{align}
	\Kld & := \left\{ (x, y, Z) \!\in\! \R \!\times\! \R_{++} \!\times\! \pdc : x \leq y \ldt (Z / y) \right\} \cup ( \R_{-} \!\times\! \{0\} \!\times\! \psdc ) \label{eq:log-det-cone}                         \\
	     & = \left\{ (x, y, Z) \!\in\! \R \!\times\! \R_{++} \!\times\! \pdc : y^d e^{x / y} \leq \det (Z) \right\} \cup ( \R_{-} \!\times\! \{0\} \!\times\! \psdc ). \label{eq:exponential-form-log-det-cone}
\end{align}
Its dual cone is given by\footnote{Here is a sketch. Let $f:\pdc \to \R$ be such that
$f(Z) = -d -\log \det(Z)$ and let $\stdCone$ be the closed convex cone generated by the set $C \coloneqq \{(1,y,Z) \mid f(Z) \leq y \}$. We have $\stdCone = \cl\{(x,y,Z) \in \R_{++} \times \R \times \pdc \mid xf(Z/x) \leq y  \}$. That is, $\stdCone$ is the closure of $\{(x,y,Z) \in \R_{++} \times \R \times \pdc \mid y \geq x(-\log \det(Z/x) -d) \}$. By \cite[Theorem~14.4]{RT97}, the closed convex cone $\bar{\stdCone}$ generated by $\{(1,v,W) \mid v \geq f^*(W) \}$ satisfies $\bar{\stdCone} = \{(u,v,W) \mid (-v,-u,W) \in \stdCone^\circ \}$,
where $\stdCone^\circ$ is the polar of $\stdCone$. The conjugate of $f$ is $-\log \det(-W)$ for $W \in -\pdc$.  Overall, we conclude that $(x,y,Z) \in \stdCone^*$ iff $(-x,-y,-Z) \in \stdCone^\circ $ iff  $(y,x,-Z)$ is in the closure of  $\{(u,v,W) \in \R_{++} \times \R \times -\pdc \mid v \geq -u\log \det(-W/u) \} $.
Finally, this implies that $(x,y,Z) \in \stdCone^*$ if and only if
$(x,y,Z)$ is in the closure of
 $\{(x,y,Z) \in \R \times \R_{++} \times \pdc \mid -x \leq y\log \det(Z/y) \} $. This means $(x,y,Z) \in \stdCone^*$ iff $(-x,y,Z) \in \Kld$. Thus, we conclude that the cones in \eqref{eq:log-det-cone} and \eqref{eq:log-det-cone-dual} are dual to each other.}
\begin{align}
	\!\!\Kld^{*} & \!\!:=\!\! \left\{\! (x, y, Z) \!\in\! \R_{--}\! \!\times\! \R \!\times\! \pdc\!\! :\! y \!\geq\! x (\ldt (-Z / x)\! + d) \!\right\} \!\cup\! ( \{0\} \!\times\! \R_{+} \!\times\! \psdc )\!\!\!  \label{eq:log-det-cone-dual}                         \\
	         & \!\!=\!\! \left\{\! (x, y, Z) \!\in\! \R_{--}\! \!\times\! \R \!\times\! \pdc\! : (-x)^d e^{y / x} \leq e^d \det (Z) \!\right\} \!\cup\! ( \{0\} \!\times\! \R_{+} \!\times\! \psdc ) . \label{eq:exponential-form-log-det-cone-dual}
\end{align}

One should notice that if \( d = 1 \), then the log-determinant cone reduces to the exponential cone, whose corresponding error bound results were discussed in~\cite{LiLoPo20}.
Hence, without loss of generality, {\bf we assume that \( d > 1 \) in the rest of this paper}.
Notice from~\eqref{eq:exponential-form-log-det-cone} and~\eqref{eq:exponential-form-log-det-cone-dual} that \( \Kld^{*} \) is a scaled and rotated version of \( \Kld  \).

For convenience, we further define
\begin{align}\label{eq:def-parts-cone-dual}
  \Kldi & := \left\{ (x, y, Z) \in \R \times \R_{++} \times \pdc : x \leq y \ldt (Z / y) \right\};           \nonumber\\[0.15cm]
  \Kldie & := \left\{ (x, y, Z) \in \R \times \R_{++} \times \pdc : x = y \ldt (Z / y) \right\};             \nonumber\\[0.15cm]
  \Kldii & := \R_{-} \times \{0\} \times \psdc;                                                               \nonumber\\[0.15cm]
  \Kldsi & := \left\{ (x, y, Z) \in \R_{--} \times \R \times \pdc : y \geq x (\ldt (-Z / x) + d) \right\}; \nonumber\\[0.15cm]
  \Kldsie & := \left\{ (x, y, Z) \in \R_{--} \times \R \times \pdc : y = x (\ldt (-Z / x) + d) \right\};   \nonumber\\[0.15cm]
  \Kldsii & := \{0\} \times \R_{+} \times \psdc.
\end{align}
With that, we have \begin{equation}\label{eq:kldboundary}
\partial \Kld = \Kldie \cup \Kldii \end{equation} and \[ \partial \Kld^{*} = \Kldsie \cup \Kldsii. \]

Before moving on, we present several inequalities, which will be useful for our subsequent analysis.
\begin{enumerate}[1.]
	\item Let \( \eta > 0 \), and let \( \bm{x} = (\bm{x}_y \ldt (\bm{x}_Z / \bm{x}_y), \bm{x}_y, \bm{x}_Z) \in \Kldie \cap B(\eta) \) with \( \bm{x}_y > 0 \) and \( \bm{x}_Z \succ 0 \) and satisfy \( \bm{x}_y \ldt (\bm{x}_Z / \bm{x}_y) \geq 0 \).
	      Then, we have
	      \begin{equation}\label{eq:vky-ldt-vkz/vky-upper-bound-by-eta}
		      0 \leq \bm{x}_y \ldt (\bm{x}_Z / \bm{x}_y) \leq \bm{x}_y \ldt (\eta I_d / \bm{x}_y) \leq d\bm{x}_y |\log (\eta )| - d\bm{x}_y \log (\bm{x}_y).
	      \end{equation}
	\item Let \( \alpha > 0 \) and \( s > 0 \). The following inequalities hold for all sufficiently small \( t > 0 \),
	      \begin{equation}\label{eq:relationships-t-a-tlogt-1/logt}
		      t \leq \sqrt{t}, \quad -t^{\alpha } \log (t) \leq t^{\alpha/2},  \quad t^{\alpha } \leq -\frac{1}{\log (t)},  \quad -t^{\alpha } \log (t) \leq -\frac{1}{\log (s t)}.
	      \end{equation}
\end{enumerate}

\subsection{Facial structure}\label{subsec:facial-structure}
In general, we are more interested in nontrivial faces, especially nontrivial exposed faces.
Recall that if there exists \( \bm{n} := ( \bm{n}_x, \bm{n}_y, \bm{n}_{Z} ) \in \partial\Kld^{*} \setminus \{\bm{0}\}\) such that \( \mathcal{F} = \Kld \cap \{\bm{n}\}^{\perp } \), then \( \mathcal{F} \) is a nontrivial exposed face of \( \Kld \).
Different nonzero \( \bm{n} \)'s along $\partial\Kld^{*}$ will induce different nontrivial exposed faces.

The next proposition completely characterizes the facial structure of the log-determinant cone.
\begin{proposition}[Facial structure of \( \Kld \)]\label{prop:facial-structure-log-det-cone}
	All nontrivial faces of the log-determinant cone can be classified into the following types:
	\begin{enumerate}[(a)]
		\item infinitely many 1-dimensional faces exposed by \( \bm{n} = (\bm{n}_x, \bm{n}_x (\ldt (-\bm{n}_Z / \bm{n}_x) + d ), \bm{n}_Z) \) with \( \bm{n}_x < 0, \bm{n}_Z \succ 0 \),
		      \begin{equation}\label{eq:1d-face-F-r}
			      \Fr := \left\{ (y \ldt (-\bm{n}_x \bm{n}_Z^{-1}), y, -y\bm{n}_x \bm{n}_Z^{-1}) : y \in \R_+ \right\} = \{y \bm{f}_{{\rm r}}: y \in \R_{+}\},
		      \end{equation}
		      where
		      \begin{equation}
			      \label{eq:def-f-r}
			      \bm{f}_{{\rm r}} = (\ldt ( -\bm{n}_x \bm{n}_Z ^{-1} ), 1, -\bm{n}_x \bm{n}_Z^{-1}).
		      \end{equation}
		\item a single \( ({\rm sd}(d) + 1) \)-dimensional exposed face exposed by \( \bm{n} = (0, \bm{n}_y, \bm{0}) \) with \( \bm{n}_y > 0 \),
		      \begin{equation}\label{eq:d-dim-face-F-d}
			      \Fd :=  \R_{-} \times \{0\} \times \psdc = \Kldii.
		      \end{equation}
		\item infinitely many \( ({\rm sd}(d - \rank(\bm{n}_Z)) + 1) \)-dimensional exposed faces given by
		      \begin{equation}\label{eq:n-dim-faces-F-n}
			      \Fs :=  \R_{-} \times \{0\} \times (\psdc \cap \{\bm{n}_Z\}^{\perp }),
		      \end{equation}
		      which are exposed by
		      \begin{equation}\label{eq:def-n-F-3}
			      \bm{n} = (0, \bm{n}_y, \bm{n}_Z) \text{ with } \bm{n}_y \geq 0, \bm{n}_Z \succeq 0, 0 < \rank(\bm{n}_Z) < d.
		      \end{equation}
		\item a single \( 1 \)-dimensional exposed face exposed by
		      \[ \bm{n} = (0, \bm{n}_y, \bm{n}_Z) \text{ with } \bm{n}_y \geq 0, \bm{n}_Z \succ 0, \]
		      that is, \( \rank(\bm{n}_Z) = d \),
		      \begin{equation}\label{eq:1-dim-face-F-inf}
			      \Ff :=  \R_{-} \times \{0\} \times \{\bm{0}\} .
		      \end{equation}
		\item infinitely many  non-exposed faces defined by
		      \begin{equation}\label{eq:ne-faces}
			      \Fne := \{0\} \times \{0\} \times (\psdc \cap \{\bm{n}_Z\}^{\perp }) ,
		      \end{equation}
		      which are proper subfaces of exposed faces of the form \( \Fs \) or \( \Fd \) (see~\eqref{eq:d-dim-face-F-d} and \eqref{eq:n-dim-faces-F-n}), and \( \bm{n}_Z \) comes from the \( \bm{n} \) that exposes \( \Fs \) or \( \Fd \), i.e., \( 0 \leq \rank(\bm{n}_Z) < d \).
	\end{enumerate}
\end{proposition}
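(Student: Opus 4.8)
The plan is to determine all nontrivial faces of $\Kld$ by analyzing the exposing vectors $\bm{n} \in \partial\Kld^{*}\setminus\{\bm0\}$, since every nontrivial face of a closed convex cone is contained in some nontrivial exposed face, and the exposed faces are precisely the sets $\Kld \cap \{\bm{n}\}^{\perp}$ as $\bm{n}$ ranges over $\Kld^{*}\setminus\{\bm0\}$. From \eqref{eq:kldboundary} and the description of $\partial\Kld^{*} = \Kldsie \cup \Kldsii$, there are exactly two broad cases for a nonzero exposing $\bm{n} = (\bm{n}_x,\bm{n}_y,\bm{n}_Z)$: either $\bm{n} \in \Kldsie$ (so $\bm{n}_x < 0$, $\bm{n}_Z \succ 0$, and $\bm{n}_y = \bm{n}_x(\ldt(-\bm{n}_Z/\bm{n}_x)+d)$), or $\bm{n} \in \Kldsii$ (so $\bm{n}_x = 0$, $\bm{n}_y \geq 0$, $\bm{n}_Z \succeq 0$). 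I would treat these in turn.

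For the first case, I would take $\bm{x} = (x,y,Z) \in \Kld$ with $\langle \bm{x},\bm{n}\rangle = 0$. If $\bm{x} \in \Kldii$, then $y = 0$, $Z \succeq 0$, $x \leq 0$, and $\langle\bm{x},\bm{n}\rangle = \bm{n}_x x + \tr(Z\bm{n}_Z)$; since $\bm{n}_x < 0$, $x \le 0$, $Z \succeq 0$ and $\bm{n}_Z \succ 0$, both terms are nonnegative, forcing $x = 0$ and $\tr(Z\bm{n}_Z) = 0$, hence $Z = 0$ by \eqref{eqn:traceproperties}; this gives only the zero vector. Otherwise $\bm{x} \in \Kldie$ with $y > 0$, $Z \succ 0$. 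Here I would use the fact that $\bm{n}$ and the associated primal point $\bm{f}_{{\rm r}}$ form a complementary pair realizing the (strict) concavity of $t \mapsto t\ldt(Z/t)$ composed with the matrix-logarithm: the equality $\langle\bm{x},\bm{n}\rangle = 0$ together with membership in the respective boundary pieces forces $Z$ to be a positive multiple of $-\bm{n}_x\bm{n}_Z^{-1}$ and $y$ to be the matching scalar, using that the gradient of $(x,y,Z)\mapsto x - y\ldt(Z/y)$ at a point of $\Kldie$ is a positive multiple of the dual point on $\Kldsie$ and that equality in the supporting hyperplane inequality holds only along the ray through $\bm{f}_{{\rm r}}$. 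This yields the $1$-dimensional faces $\Fr$ of item (a); I would also verify the stated dimension count and that these are genuinely faces (not merely subsets cut out by the hyperplane) by the standard exposed-face argument.

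For the second case $\bm{n} = (0,\bm{n}_y,\bm{n}_Z)$ with $\bm{n}_y \ge 0$, $\bm{n}_Z \succeq 0$, I would again split $\bm{x} \in \Kld$ by whether $\bm{x} \in \Kldie$ or $\bm{x} \in \Kldii$. If $\bm{x} \in \Kldie$ with $y > 0$, $Z \succ 0$, then $\langle\bm{x},\bm{n}\rangle = \bm{n}_y y + \tr(Z\bm{n}_Z) = 0$ with both terms nonnegative forces $\bm{n}_y y = 0$ and $\tr(Z\bm{n}_Z) = 0$; since $y > 0$ this gives $\bm{n}_y = 0$, and $\tr(Z\bm{n}_Z)=0$ with $Z \succ 0$ forces $\bm{n}_Z = 0$, i.e. $\bm{n} = \bm 0$, contradiction — so no point of $\Kldie$ lies in the face. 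Hence the face is contained in $\Kldii$, and for $\bm{x} = (x,0,Z) \in \Kldii$ the condition $\langle\bm{x},\bm{n}\rangle = \tr(Z\bm{n}_Z) = 0$ is equivalent to $Z \in \psdc\cap\{\bm{n}_Z\}^{\perp}$. Splitting on $\rank(\bm{n}_Z) \in \{0\},\{1,\dots,d-1\},\{d\}$ then produces, respectively, $\Fd = \Kldii$ (item (b)), the faces $\Fs$ (item (c)), and $\Ff$ (item (d)); the dimension formulas follow since $\dim(\psdc\cap\{\bm{n}_Z\}^{\perp}) = {\rm sd}(d - \rank(\bm{n}_Z))$ — the face consists of PSD matrices whose range is orthogonal to that of $\bm{n}_Z$ — plus one for the $x$-coordinate. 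Finally, for item (e) I would exhibit the sets $\Fne = \{0\}\times\{0\}\times(\psdc\cap\{\bm{n}_Z\}^{\perp})$ as faces of $\Fd$ or $\Fs$: within such a face the $x$-coordinate ranges over $\R_{-}$, and $\{0\}\times\{0\}\times(\psdc\cap\{\bm{n}_Z\}^{\perp})$ is exactly the subface where $x = 0$, which is a (non-exposed, as it is a face of the PSD block without an $x$-exposing direction inside the ambient cone) face; I would confirm non-exposedness by checking that no $\bm{m}\in\Kld^{*}$ has $\Kld\cap\{\bm{m}\}^{\perp} = \Fne$, since any $\bm{m}$ annihilating all of $\Fne$ would necessarily annihilate more of $\Fd$ or $\Fs$. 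I would also argue completeness: every nontrivial face is contained in a maximal proper (hence exposed) face, and iterating within the PSD-type faces — which are linearly isomorphic to $\R_{-}\times\psdc^{k}$ whose faces are classical — shows nothing else arises.

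The main obstacle I anticipate is the first case: pinning down exactly which primal points $(x,y,Z)$ with $y>0$, $Z\succ0$ satisfy $\langle\bm{x},\bm{n}\rangle=0$ for a given $\bm{n}\in\Kldsie$. This requires carefully exploiting the strict concavity/strict convexity structure of $t\ldt(Z/t)$ and $-\log\det$ so that the supporting hyperplane touches $\Kld$ only along a single ray; a clean way is to note that $\bm{n}$ is (up to scaling) the gradient of the defining function at $\bm{f}_{{\rm r}}$ and invoke that a differentiable convex function's supporting hyperplane at a point of strict convexity of the relevant section meets the epigraph only at that point. Everything else — the boundary decompositions \eqref{eq:kldboundary}, the trace dichotomy \eqref{eqn:traceproperties}, the dimension bookkeeping, and the non-exposedness check — is routine.
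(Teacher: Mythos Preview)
Your proposal is correct and follows essentially the same route as the paper: case-split on $\bm{n}\in\Kldsie$ versus $\bm{n}\in\Kldsii$, dispose of the $\Kldii$ part via the sign/trace argument, classify the $\bm{n}_x=0$ faces by $\rank(\bm{n}_Z)$, and pick up the non-exposed faces as the leftover subfaces of $\Fd$ and $\Fs$.

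The one place where the paper differs from your sketch is exactly the point you flag as the main obstacle, namely pinning down $\Kld\cap\{\bm{n}\}^{\perp}$ for $\bm{n}\in\Kldsie$ and $\bm{q}\in\Kldie$. Rather than invoking strict concavity or a gradient/supporting-hyperplane argument, the paper proceeds by direct computation: writing $\bm{q}=(\bm{q}_y\ldt(\bm{q}_Z/\bm{q}_y),\bm{q}_y,\bm{q}_Z)$ and expanding $\langle\bm{n},\bm{q}\rangle=0$, one obtains
\[
\ldt\!\left(-\frac{\bm{n}_Z\bm{q}_Z}{\bm{n}_x\bm{q}_y}\right)+d+\tr\!\left(\frac{\bm{n}_Z\bm{q}_Z}{\bm{n}_x\bm{q}_y}\right)=0,
\]
which, after symmetrizing via $\bm{n}_Z^{1/2}\bm{q}_Z\bm{n}_Z^{1/2}$, becomes $\sum_{i=1}^{d}\bigl(\log\lambda_i+1-\lambda_i\bigr)=0$ with $\lambda_i$ the eigenvalues of $-\bm{n}_Z^{1/2}\bm{q}_Z\bm{n}_Z^{1/2}/(\bm{n}_x\bm{q}_y)\succ0$. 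Since $\log x+1-x\le0$ with equality only at $x=1$, every $\lambda_i=1$, forcing $\bm{q}_Z=-\bm{q}_y\bm{n}_x\bm{n}_Z^{-1}$ and hence $\bm{q}\in\R_+\bm{f}_{\rm r}$. This is just a concrete unpacking of the strict-concavity heuristic you describe, but it sidesteps any appeal to differentiability or gradient uniqueness and makes the argument entirely self-contained; you may prefer to adopt it in place of the abstract version.
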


\begin{proof}
	Let \( \bm{n} := (\bm{n}_x, \bm{n}_y, \bm{n}_Z) \in \Kld^{*} \) be such that \( \{\bm{n}\}^{\perp } \cap \Kld \) is a nontrivial face of \( \Kld \).
	Recall that \( \Kld \) is pointed, so \( \bm{n} \in \partial \Kld^{*} \setminus \{\bm{0}\} \).
	By~\eqref{eq:def-parts-cone-dual}, \( \bm{n}_x \leq 0 \) and we can determine whether \( \bm{n} \in \Kldsi \) or \( \bm{n} \in \Kldsii \) by checking whether \( \bm{n}_x < 0 \) or not. 
	Therefore, we shall consider the following cases.

	\noindent\uline{\( \bm{n}_x < 0 \):} \( \bm{n}_x < 0 \) indicates that \( \bm{n} \in \Kldsie \), then we must have
	\[ \bm{n} = (\bm{n}_x, \bm{n}_x (\ldt (-\bm{n}_Z / \bm{n}_x) + d ), \bm{n}_Z ) \text{ with } \bm{n}_x < 0,\, \bm{n}_Z \succ 0. \]
	For any \( \bm{q} := ( \bm{q}_x, \bm{q}_y, \bm{q}_Z ) \in \partial \Kld \), since \( \bm{n}_x < 0 \), we can see that \( \bm{q} \in \{\bm{n}\}^{\perp } \) if and only if
	\begin{equation}
    \label{eq:for-facial-stru-nx<0}
    \bm{q}_x + \bm{q}_y ( \ldt (-\bm{n}_Z / \bm{n}_x) + d ) + \tr(\bm{n}_Z \bm{q}_Z) / \bm{n}_x  = 0.
  \end{equation}

	If \( \bm{q}_y = 0 \), then \( \bm{q} \in \Kldii \), and so \( \bm{q}_x \leq 0, \bm{q}_Z \succeq 0 \).
	This together with \( \bm{n}_Z \succ 0 \) and \eqref{eqn:traceproperties} imply that \( \tr (\bm{n}_Z \bm{q}_Z) \geq 0 \).
	Since \( \bm{n}_x < 0 \), we observe that
	\[ 0 \leq -\bm{q}_x =  \tr(\bm{n}_Z \bm{q}_Z) / \bm{n}_x \leq 0. \]
	Thus, \( \bm{q}_x = 0 \) and \( \tr(\bm{n}_Z \bm{q}_Z) = 0 \).
  The latter relation leads to \( \bm{q}_Z = \bm{0} \).
  Consequently, \( \bm{q} = \bm{0} \).

	If \( \bm{q}_y \neq 0 \), then \( \bm{q}_y > 0 \) by the definition of the log-determinant cone and hence \( \bm{q} \in \Kldie \).
	Then, we know that \( \bm{q}_x = \bm{q}_y \ldt (\bm{q}_Z / \bm{q}_y),\, \bm{q}_Z \succ 0 \) and hence \eqref{eq:for-facial-stru-nx<0} becomes
	\[ \ldt \left(\frac{\bm{q}_Z}{\bm{q}_y}\right) + \ldt \left(-\frac{\bm{n}_Z}{\bm{n}_x}\right) + d + \tr\left(\frac{\bm{n}_Z \bm{q}_Z}{\bm{n}_x \bm{q}_y}\right) = 0. \]
	After rearranging terms, we have
	\begin{equation}
		\label{eq:orthogonal-qy>0-single-terms}
		\ldt \left(-\frac{\bm{n}_Z \bm{q}_Z}{\bm{n}_x \bm{q}_y}\right) + d + \tr\left(\frac{\bm{n}_Z \bm{q}_Z}{\bm{n}_x \bm{q}_y}\right) = 0.
	\end{equation}
	Note also that
	\[ \det \left(-\frac{\bm{n}_Z \bm{q}_Z}{\bm{n}_x \bm{q}_y}\right) =  \det \left(-\frac{\bm{n}_Z^{\frac{1}{2}} \bm{q}_Z \bm{n}_Z^{\frac{1}{2}} }{\bm{n}_x \bm{q}_y}\right) \quad \text{and} \quad \tr\left(\frac{\bm{n}_Z \bm{q}_Z}{\bm{n}_x \bm{q}_y}\right) = \tr\left(\frac{\bm{n}_Z^{\frac{1}{2}}\bm{q}_Z \bm{n}_Z^{\frac{1}{2}}}{\bm{n}_x \bm{q}_y}\right), \]
	where \( \bm{n}_Z^{\frac{1}{2}}\bm{q}_Z \bm{n}_Z^{\frac{1}{2}} \succ 0 \) and \( -\frac{\bm{n}_Z^{\frac{1}{2}}\bm{q}_Z \bm{n}_Z^{\frac{1}{2}}}{\bm{n}_x \bm{q}_y} \succ 0 \).

	Let \( f(x) = \log (x) - x + 1 \), we can rewrite~(\ref{eq:orthogonal-qy>0-single-terms}) as follows,
	\begin{equation}
		\label{eq:orthogonal-qy>0-sum}
    \begin{split}
      & \sum_{i=1}^d f\left(\lambda_i\left(-\frac{\bm{n}_Z^{\frac{1}{2}} \bm{q}_Z \bm{n}_Z^{\frac{1}{2}}}{\bm{n}_x\bm{q}_y}\right)\right) \\
      = & \sum_{i=1}^d \left(\log \left(\lambda_i\left(-\frac{\bm{n}_Z^{\frac{1}{2}} \bm{q}_Z \bm{n}_Z^{\frac{1}{2}}}{\bm{n}_x\bm{q}_y}\right)\right) + 1 - \lambda_i\left(-\frac{\bm{n}_Z^{\frac{1}{2}} \bm{q}_Z \bm{n}_Z^{\frac{1}{2}}}{\bm{n}_x\bm{q}_y}\right) \right) = 0.
    \end{split}
	\end{equation}
	Since \( f(x) \leq 0 \) for all \( x > 0 \) and \( f(x) = 0 \) if and only if \( x = 1 \), (\ref{eq:orthogonal-qy>0-sum}) holds if and only if
	\[ \lambda_i\left(-\frac{\bm{n}_Z^{\frac{1}{2}} \bm{q}_Z \bm{n}_Z^{\frac{1}{2}}}{\bm{n}_x\bm{q}_y}\right) = 1 \quad \forall i \in \{1, 2, \dots , d\}. \]
	This illustrates that all the eigenvalues of \( -\frac{\bm{n}_Z^{\frac{1}{2}} \bm{q}_Z \bm{n}_Z^{\frac{1}{2}} }{\bm{n}_x \bm{q}_y} \) are \( 1 \).
	Hence, one can immediately see \( \bm{n}_Z^{\frac{1}{2}} \bm{q}_Z \bm{n}_Z^{\frac{1}{2}} = -\bm{n}_x\bm{q}_y I_d \) and so \( \bm{q}_Z = -\bm{q}_y \bm{n}_x \bm{n}_Z^{-1} \).
	By substituting this expression of \( \bm{q}_Z \) into \( \bm{q} = (\bm{q}_y \ldt (\bm{q}_Z / \bm{q}_y), \bm{q}_y, \bm{q}_Z) \), we obtain~(\ref{eq:1d-face-F-r}).

	\uline{\( \bm{n}_x = 0 \):} \( \bm{n}_x = 0 \) indicates that \( \bm{n} \in \Kldsii \), then \( \bm{n}_y \geq 0 \) and \( \bm{n}_Z \succeq 0 \).
	Now, for any \( \bm{q} \in \partial \Kld \), we have \( \bm{q} \in \{\bm{n}\}^\perp \) if and only if
	\begin{equation}\label{eq:orthogonal-nx=0}
		\bm{n}_y \bm{q}_y + \tr(\bm{n}_Z \bm{q}_Z) = 0.
	\end{equation}
	Since \( \bm{n}_y \geq 0, \bm{q}_y \geq 0, \bm{n}_Z \succeq 0 \) and \( \bm{q}_Z \succeq 0 \), we observe that both summands on the left hand side of~(\ref{eq:orthogonal-nx=0}) are nonnegative.
	Therefore,~(\ref{eq:orthogonal-nx=0}) holds if and only if
	\begin{equation}\label{eq:orthogonal-complementary}
		\bm{n}_y \bm{q}_y = 0, \quad \tr(\bm{n}_Z \bm{q}_Z) = 0.
	\end{equation}
	These together with \eqref{eqn:traceproperties} make it clear the cases we need to consider.

	Specifically, if \( \bm{n}_x = 0 \), we consider the following four cases.
	\begin{enumerate}
		\item If \( \rank(\bm{n}_Z) = 0 \) and \( \bm{n}_y = 0 \), then \( \bm{n} = \bm{0} \), which contradicts our assumption.
		      This case is hence impossible.
		\item If \( \rank(\bm{n}_Z) = 0 \) and \( \bm{n}_y > 0 \), then by~(\ref{eq:orthogonal-complementary}), \( \bm{q}_y = 0 \).
          This corresponds to \eqref{eq:d-dim-face-F-d}.
		\item If \( 0 < \rank(\bm{n}_Z) < d \), then \( \bm{q}_Z \succeq 0 \) but \( \bm{q}_Z \) is not definite, so $(\bm{q}_x,\bm{q}_y,\bm{q}_Z) \in \Kldii$. Since \( \bm{q}_Z \in \{\bm{n}_Z\}^{\perp } \) holds, this corresponds to \eqref{eq:n-dim-faces-F-n}.
		\item If \( \rank(\bm{n}_Z) = d \), i.e., \( \bm{n}_Z \succ 0 \), then \( \bm{q}_Z = \bm{0} \).
          This corresponds to \eqref{eq:1-dim-face-F-inf}.
	\end{enumerate}
	Therefore, we obtain the exposed faces defined as in~(\ref{eq:d-dim-face-F-d}),~(\ref{eq:n-dim-faces-F-n}) and~(\ref{eq:1-dim-face-F-inf}).

	We now show that all nontrivial faces of \( \Kld \) were accounted for~\eqref{eq:1d-face-F-r},~\eqref{eq:d-dim-face-F-d},~\eqref{eq:n-dim-faces-F-n},~\eqref{eq:1-dim-face-F-inf} and~\eqref{eq:ne-faces}.
	First of all, by the previous discussion, all nontrivial exposed faces must be among the ones in \eqref{eq:1d-face-F-r}, \eqref{eq:d-dim-face-F-d}, \eqref{eq:n-dim-faces-F-n}, and \eqref{eq:1-dim-face-F-inf}.
	Suppose \( \mathcal{F} \) is a non-exposed face of \( \Kld  \).
	Then it must be contained in a nontrivial exposed face $\widehat\stdFace$ of \( \Kld  \), e.g., \cite[Proposition~3.6]{BW81} or \cite[Proposition~2.1]{LRS20}.
	The faces in \eqref{eq:1d-face-F-r} and \eqref{eq:1-dim-face-F-inf} are one-dimensional, so the only candidates for $\widehat\stdFace$ are the faces as in
	\eqref{eq:d-dim-face-F-d} and~\eqref{eq:n-dim-faces-F-n}.
	
	So suppose that $\widehat\stdFace$ is as in ~\eqref{eq:d-dim-face-F-d} or ~\eqref{eq:n-dim-faces-F-n}. Recalling the list of nontrivial exposed faces described so far, the only
	nontrival faces of $\widehat\stdFace$ that have not appeared yet
	are the ones of the form \( \Fne  \) (as in \eqref{eq:ne-faces}) for some \( \bm{n}_Z \) with \( 0 \leq \rank(\bm{n}_Z) < d \).
	This shows the completeness of the classification.
\end{proof}

It is worth noting that when \( d = 1 \), the case corresponding to \( \Fs \) does not occur.
We also have the following relationships between these nontrivial faces.
Let \( \bm{n} \neq \bm{0} \) with \( 0 \leq \rank(\bm{n}_Z) < d \) be given.
If \( \rank(\bm{n}_Z) > 0 \), then the corresponding faces \( \Fs \) and \( \Fne\) satisfy the following inclusion
\begin{equation}\label{eq:relationships-faces}
	\Fne \properideal \Fs \properideal \Fd \quad \text{and} \quad \Ff \properideal \Fs \properideal \Fd.
\end{equation}
If \( \rank(\bm{n}_Z) = 0 \), then we have
\begin{equation}\label{eq:relationships-faces-2}
	\Fne \properideal \Fd.
\end{equation}
For distinct \( \bm{n}^1 := (\bm{n}_x^1, \bm{n}_y^1, \bm{n}_Z^1) \) and \( \bm{n}^2 := (\bm{n}_x^2, \bm{n}_y^2, \bm{n}_Z^2) \) with \( 0 < \rank(\bm{n}_Z^1) < d \) and \( 0 < \rank(\bm{n}_Z^2) < d \), suppose \( \bm{n}^1 \) and \( \bm{n}^2 \) expose \( \Fs^1 \) and \( \Fs^2 \), respectively.
If \( \text{range}( \bm{n}_Z^1 ) \supsetneq \text{range}( \bm{n}_Z^2 ) \), then \( \Fs^1 \properideal \Fs^2 \) (see, e.g., \cite[Section~6]{BC75}).
A similar result also holds for non-exposed faces, that is, denote the non-exposed faces by \( \mathcal{F}_{{\rm ne}}^{\#1}  \) and \( \mathcal{F}_{{\rm ne}}^{\#2} \), respectively, with respect to \( \bm{n}^1 \) and \( \bm{n}^2 \), if \( \text{range}(\bm{n}_Z^1) \supsetneq \text{range}(\bm{n}_Z^2) \), then \( \mathcal{F}_{{\rm ne}}^{\#1} \properideal \mathcal{F}_{{\rm ne}}^{\#2} \).

\subsection{One-step facial residual functions}\label{subsec:facial-residual-functions}
In this subsection, we shall apply the strategy in \cite[Section 3.1]{LiLoPo20} to compute the corresponding one-step facial residual functions for nontrivial exposed faces of the log-determinant cone.
Put concretely, consider \( \mathcal{F} = \Kld \cap \{\bm{n}\}^{\perp} \) with \( \bm{n} \in \partial \Kld^{*} \setminus \{\bm{0}\} \).
For \( \eta > 0 \) and some nondecreasing function \( \mathfrak{g} : \R_+ \to \R_+ \) with \( \mathfrak{g}(0) = 0 \) and \( \mathfrak{g} \geq | \cdot |^{\alpha} \) for some \( \alpha \in (0, 1] \), we define
\begin{equation}
	\label{eq:def-gamma}
	\!\gamma_{\bm{n},\eta} \!:=\! \inf_{\bm{v}}\left\{\frac{\mathfrak{g}(\|\bm{v} - \bm{w}\|)}{\|\bm{u} - \bm{w}\|}\,\bigg| \,
	\begin{array}{c}
		\bm{v}\in \partial \Kld \cap B(\eta)\setminus \mathcal{F},\,\bm{w} = P_{\{\bm{n}\}^\perp}(\bm{v}), \\
		\bm{u} = P_{\mathcal{F}}(\bm{w}),\,\bm{u} \neq \bm{w}
	\end{array}\right\}.
\end{equation}
In view of \cite[Theorem 3.10]{LiLoPo20} and \cite[Lemma 3.9]{LiLoPo20}, if \( \gamma_{\bm{n}, \eta } \in (0,\infty]\) then we can use $\gamma_{\bm{n}, \eta }$ and $\mathfrak{g}$ to
construct a one-step facial residual function for $\Kld$ and $\bm{n}$.
In \cite{LiLoPo20}, the positivity of \( \gamma_{\bm{n}, \eta } \) (with the exponential cone in place of \( \Kld \) and some properly selected \( \mathfrak{g} \)) was shown by contradiction.
Here, we will follow a similar strategy and make extensive use of the following fact from \cite[Lemma 3.12]{LiLoPo20}: if \( \gamma_{\bm{n}, \eta } = 0 \), then there exist \( \widehat{\bm{v}}\in \mathcal{F} \) and a sequence \( \{\bm{v}^k\}\subset \partial \Kld \cap B(\eta)\setminus \mathcal{F} \) such that
\begin{equation}\label{eq:for-contradiction}
	\lim_{k\to \infty}\bm{v}^k = \lim_{k\to \infty}\bm{w}^k = \widehat{\bm{v}}\,\,{\rm and}\,\,\lim_{k\to\infty}\frac{\mathfrak{g}(\|\bm{w}^k - \bm{v}^k\|)}{\|\bm{u}^k - \bm{w}^k\|} = 0,
\end{equation}
where \( \bm{w}^k = P_{\{\bm{n}\}^\perp}(\bm{v}^k) \), \( \bm{u}^k = P_{\mathcal{F}}(\bm{w}^k) \) and \( \bm{u}^k\neq \bm{w}^k \).

\subsubsection{\( \Fd  \): the unique \( ({\rm sd}(d)+1) \)-dimensional faces}\label{subsubsec:FRFs-F-d}
We define the piecewise modified Boltzmann-Shannon entropy \( \mathfrak{g}_{{\rm d}} :\R_{+} \to \R_{+} \) as follows:
\begin{equation}\label{eq:BS-entropy}
	\mathfrak{g}_{{\rm d}}(t) := \left\{
	\begin{array}{ll}
		0                 & \text{ if } t = 0,                    \\
		-t \log (t)       & \text{ if } 0 < t \leq \frac{1}{e^2}, \\
		t + \frac{1}{e^2} & \text{ if } t > \frac{1}{e^2}.
	\end{array} \right.
\end{equation}
Note that \( \mathfrak{g}_{{\rm d}} \) is nondecreasing with \( \mathfrak{g}_{{\rm d}}(0) = 0 \) and \( | t | \leq \mathfrak{g}_{{\rm d}} (t) \) for any \( t \in \R_+ \).

The next theorem shows that \( \gamma _{\bm{n}, \eta } \in (0, \infty] \) for \( \Fd  \), which implies that an entropic error bound holds.

\begin{theorem}[Entropic error bound concerning \( \Fd  \)]\label{thm:entropic-error-bound-F-d}
	Let \( \bm{n} = (0, \bm{n}_y, \bm{0}) \in \partial \Kld^{*} \) with \( \bm{n}_y > 0 \) such that \( \Fd = \Kld \cap \{\bm{n}\}^{\perp } \).
	Let \( \eta > 0 \) and let \( \gamma _{\bm{n}, \eta } \) be defined as in \eqref{eq:def-gamma} with \( \mathcal{F} = \Fd \) and \( \mathfrak{g} = \mathfrak{g}_{{\rm d}} \).
	Then \( \gamma _{\bm{n}, \eta } \in (0, \infty] \) and
	\begin{equation}\label{eq:entropic-error-bound-F-d}
		\dist(\bm{q}, \Fd) \leq \max \{2, 2\gamma _{\bm{n}, \eta }^{-1}\}\cdot \mathfrak{g}_{{\rm d}}(\dist(\bm{q}, \Kld)) \quad\quad \forall \bm{q} \in \{\bm{n}\}^{\perp } \cap B(\eta).
	\end{equation}
\end{theorem}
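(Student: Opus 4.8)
The plan is to first establish that $\gamma_{\bm{n},\eta}\in(0,\infty]$; once this is in hand, the error bound \eqref{eq:entropic-error-bound-F-d} is obtained by the standard mechanism that converts positivity of $\gamma_{\bm{n},\eta}$ (together with the choice $\mathfrak{g}=\mathfrak{g}_{{\rm d}}$) into a one-step facial residual function, cf.\ \cite[Theorem~3.10, Lemma~3.9]{LiLoPo20}. To prove $\gamma_{\bm{n},\eta}>0$ I would argue by contradiction: assuming $\gamma_{\bm{n},\eta}=0$, \cite[Lemma~3.12]{LiLoPo20} supplies $\widehat{\bm{v}}\in\Fd$ and a sequence $\{\bm{v}^k\}\subset\partial\Kld\cap B(\eta)\setminus\Fd$ with $\bm{v}^k\to\widehat{\bm{v}}$, $\bm{w}^k:=P_{\{\bm{n}\}^{\perp}}(\bm{v}^k)$, $\bm{u}^k:=P_{\Fd}(\bm{w}^k)\neq\bm{w}^k$, and $\mathfrak{g}_{{\rm d}}(\|\bm{w}^k-\bm{v}^k\|)/\|\bm{u}^k-\bm{w}^k\|\to 0$, as in \eqref{eq:for-contradiction}.

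The next step is to make these projections explicit. By \eqref{eq:kldboundary}, $\partial\Kld=\Kldie\cup\Kldii$, and since $\Fd=\Kldii$ (see \eqref{eq:d-dim-face-F-d}) while $\bm{v}^k\notin\Fd$, each $\bm{v}^k$ lies in $\Kldie$, so $\bm{v}^k=(\bm{v}^k_y\ldt(\bm{v}^k_Z/\bm{v}^k_y),\bm{v}^k_y,\bm{v}^k_Z)$ with $\bm{v}^k_y>0$ and $\bm{v}^k_Z\succ 0$. As $\bm{n}=(0,\bm{n}_y,\bm{0})$, the hyperplane $\{\bm{n}\}^{\perp}$ equals $\R\times\{0\}\times\mathcal{S}^d$, hence $\bm{w}^k=(\bm{v}^k_y\ldt(\bm{v}^k_Z/\bm{v}^k_y),0,\bm{v}^k_Z)$ and $\|\bm{v}^k-\bm{w}^k\|=\bm{v}^k_y$. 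Projecting coordinatewise onto $\Fd=\R_-\times\{0\}\times\psdc$ and using $\bm{v}^k_Z\succ 0$ gives $\bm{u}^k=(\min\{\bm{v}^k_y\ldt(\bm{v}^k_Z/\bm{v}^k_y),0\},0,\bm{v}^k_Z)$; because $\bm{u}^k\neq\bm{w}^k$ we are forced to have $\bm{v}^k_y\ldt(\bm{v}^k_Z/\bm{v}^k_y)>0$, so $\bm{u}^k=(0,0,\bm{v}^k_Z)$ and $\|\bm{u}^k-\bm{w}^k\|=\bm{v}^k_y\ldt(\bm{v}^k_Z/\bm{v}^k_y)$.

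Then I would take the limit. Since $\widehat{\bm{v}}\in\Fd$ has zero $y$-component, $\bm{v}^k_y\to 0^+$, so for all large $k$ we have $\bm{v}^k_y\le e^{-2}$ and $\mathfrak{g}_{{\rm d}}(\bm{v}^k_y)=-\bm{v}^k_y\log\bm{v}^k_y$. Applying \eqref{eq:vky-ldt-vkz/vky-upper-bound-by-eta} to $\bm{v}^k\in\Kldie\cap B(\eta)$ (legitimate since $\bm{v}^k_y\ldt(\bm{v}^k_Z/\bm{v}^k_y)>0$) gives, for all large $k$,
\[
\frac{\mathfrak{g}_{{\rm d}}(\|\bm{w}^k-\bm{v}^k\|)}{\|\bm{u}^k-\bm{w}^k\|}
=\frac{-\bm{v}^k_y\log\bm{v}^k_y}{\bm{v}^k_y\ldt(\bm{v}^k_Z/\bm{v}^k_y)}
\ge\frac{-\bm{v}^k_y\log\bm{v}^k_y}{d\bm{v}^k_y|\log\eta|-d\bm{v}^k_y\log\bm{v}^k_y}
=\frac{-\log\bm{v}^k_y}{d(|\log\eta|-\log\bm{v}^k_y)},
\]
and the right-hand side converges to $1/d>0$ as $\bm{v}^k_y\to 0^+$. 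This contradicts $\mathfrak{g}_{{\rm d}}(\|\bm{w}^k-\bm{v}^k\|)/\|\bm{u}^k-\bm{w}^k\|\to 0$, so $\gamma_{\bm{n},\eta}\in(0,\infty]$.

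For the error bound \eqref{eq:entropic-error-bound-F-d}, fix $\bm{q}\in\{\bm{n}\}^{\perp}\cap B(\eta)$; one may assume $\bm{q}\notin\Kld$, since otherwise $\bm{q}\in\Kld\cap\{\bm{n}\}^{\perp}=\Fd$ and there is nothing to prove. Put $\bm{v}:=P_{\Kld}(\bm{q})$; as $P_{\Kld}$ is nonexpansive and $P_{\Kld}(\bm{0})=\bm{0}$, we have $\|\bm{v}\|\le\|\bm{q}\|\le\eta$, so $\bm{v}\in\partial\Kld\cap B(\eta)$. If $\bm{v}\in\Fd$ then $\dist(\bm{q},\Fd)\le\|\bm{q}-\bm{v}\|=\dist(\bm{q},\Kld)\le\mathfrak{g}_{{\rm d}}(\dist(\bm{q},\Kld))$; if $\bm{v}\notin\Fd$, set $\bm{w}:=P_{\{\bm{n}\}^{\perp}}(\bm{v})$, $\bm{u}:=P_{\Fd}(\bm{w})$, and combine the elementary estimates $\|\bm{q}-\bm{w}\|\le\|\bm{q}-\bm{v}\|$ and $\|\bm{v}-\bm{w}\|\le\|\bm{q}-\bm{v}\|$ (which follow by Pythagoras since $\bm{q},\bm{w}\in\{\bm{n}\}^{\perp}$, as in the proof of Lemma~\ref{lemma:norm-wk-uk}) with $\|\bm{u}-\bm{w}\|\le\gamma_{\bm{n},\eta}^{-1}\mathfrak{g}_{{\rm d}}(\|\bm{v}-\bm{w}\|)$ coming from \eqref{eq:def-gamma} (when $\bm{u}\neq\bm{w}$; the case $\bm{u}=\bm{w}$ is easier), the triangle inequality, monotonicity of $\mathfrak{g}_{{\rm d}}$, and $t\le\mathfrak{g}_{{\rm d}}(t)$, to get $\dist(\bm{q},\Fd)\le(1+\gamma_{\bm{n},\eta}^{-1})\mathfrak{g}_{{\rm d}}(\dist(\bm{q},\Kld))\le\max\{2,2\gamma_{\bm{n},\eta}^{-1}\}\mathfrak{g}_{{\rm d}}(\dist(\bm{q},\Kld))$. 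The main obstacle is really the projection bookkeeping of the second and third paragraphs — in particular recognizing that $\bm{v}^k_y\ldt(\bm{v}^k_Z/\bm{v}^k_y)>0$ is forced and that \eqref{eq:vky-ldt-vkz/vky-upper-bound-by-eta} pins the ratio below by (essentially) $1/d$; the final paragraph could equally be replaced by a direct appeal to \cite[Theorem~3.10]{LiLoPo20}.
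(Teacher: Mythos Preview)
Your proof is correct and follows essentially the same approach as the paper: the same contradiction setup via \cite[Lemma~3.12]{LiLoPo20}, the same identification $\bm{v}^k\in\Kldie$, the same explicit projections yielding $\|\bm{w}^k-\bm{v}^k\|=\bm{v}^k_y$ and $\|\bm{w}^k-\bm{u}^k\|=\bm{v}^k_y\ldt(\bm{v}^k_Z/\bm{v}^k_y)>0$, and the same use of \eqref{eq:vky-ldt-vkz/vky-upper-bound-by-eta} to force the ratio to be bounded below by (asymptotically) $1/d$. Your final paragraph spells out what the paper obtains by directly citing \cite[Theorem~3.10]{LiLoPo20}, but as you yourself note, this is interchangeable with that citation.
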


\begin{proof}
	If \( \gamma _{\bm{n}, \eta } = 0 \), in view of~\cite[Lemma 3.12]{LiLoPo20}, there exist \( \widehat{\bm{v}}\in \Fd \) and a sequence \( \{\bm{v}^k\}\subset \partial \Kld \cap B(\eta)\setminus \Fd \) such that \eqref{eq:for-contradiction} holds with \( \mathfrak{g} = \mathfrak{g}_{{\rm d}} \) and $\stdFace = \Fd$.

	By~\eqref{eq:d-dim-face-F-d}, \( \widehat{\bm{v}} = (\widehat{\bm{v}}_x, 0, \widehat{\bm{v}}_Z)  \) with \( \widehat{\bm{v}}_Z \succeq 0 \).
	Since \( \bm{v}^k \in \partial \Kld \cap B(\eta) \setminus \Fd \) for all \( k \), we have \( \bm{v}_y^k > 0 \) and \( \bm{v}^k \in \Kldie \) for all \( k \).
	Hence, \( \bm{v}^k = (\bm{v}_y^k \ldt (\bm{v}_Z^k / \bm{v}_y^k ), \bm{v}_y^k, \bm{v}_Z^k) \text{ with } \bm{v}_y^k > 0, \bm{v}_Z^k \succ 0 \) for all \( k \).

	Recall that \( \bm{n} = (0, \bm{n}_y, \bm{0}) \) with \( \bm{n}_y > 0 \), then \( \| \bm{n} \| = \bm{n}_y  \text{ and }  \langle \bm{n}, \bm{v}^k \rangle = \bm{n}_y \bm{v}_y^k > 0. \)
	Since \( \bm{w}^k = P_{\{\bm{n}\}^{\perp } } (\bm{v}^k) \) and \( \{\bm{n}\}^{\perp } \) is a hyperplane, one can immediately see that for all \( k \),
	\[ \bm{w}^k = \bm{v}^k - \frac{\langle \bm{n}, \bm{v}^k \rangle }{\| \bm{n} \|^2 } \bm{n} = (\bm{v}_y^k \ldt (\bm{v}_Z^k / \bm{v}_y^k), 0, \bm{v}_Z^k) \quad \text{and} \quad \| \bm{w}^k - \bm{v}^k \| = \frac{|\langle \bm{n}, \bm{v}^k \rangle |}{\| \bm{n} \| } = \bm{v}_y^k.  \]
	Using~\eqref{eq:d-dim-face-F-d}, \( \bm{u}^k = P_{\Fd} (\bm{w}^k) \) and \( \bm{u}^k \neq \bm{w}^k \), we  see that \( \bm{v}_y^k \ldt (\bm{v}_Z^k / \bm{v}_y^k) > 0 \) and \( \bm{u}^k = (0, 0, \bm{v}_Z^k) \).
	We thus obtain that for all \( k \),
	\[ \| \bm{w}^k - \bm{u}^k \| = \bm{v}_y^k \ldt (\bm{v}_Z^k / \bm{v}_y^k). \]
	Because \( \lim_{k \to \infty} \bm{v}_y^k = 0 \), for sufficiently large \( k \), we have \( 0 < \bm{v}_y^k < \frac{1}{e^2} \).
	Hence,
	\begin{align*}
		\lim_{k \to \infty} \frac{\mathfrak{g}_{{\rm d}}(\| \bm{w}^k - \bm{v}^k \| )}{\| \bm{w}^k - \bm{u}^k \| } \overset{(\text{a})}{\geq} \lim_{k \to \infty} \frac{-\bm{v}_y^k \log (\bm{v}_y^k) }{d\bm{v}_y^k |\log (\eta )| - d\bm{v}_y^k \log (\bm{v}_y^k)} = \lim_{k \to \infty} \frac{1}{d - d \frac{|\log(\eta)|}{\log (\bm{v}_y^k)}} = \frac{1}{d} > 0,
	\end{align*}
	where (a) comes from the fact \( \bm{v}^k \in B(\eta) \) and~\eqref{eq:vky-ldt-vkz/vky-upper-bound-by-eta}.
	This contradicts~\eqref{eq:for-contradiction} with \( \mathfrak{g}_{{\rm d}} \) in place of \( \mathfrak{g} \) and hence this case cannot happen.
	Therefore, we  conclude that \( \gamma _{\bm{n}, \eta } \in (0,\infty] \), with which and~\cite[Theorem 3.10]{LiLoPo20},~\eqref{eq:entropic-error-bound-F-d} holds.
\end{proof}

\begin{remark}[Tightness of~\eqref{eq:entropic-error-bound-F-d}]\label{remark:tightness-F-d}
	We claim that for \( \Fd \), there is a specific choice of sequence \( \{\bm{w}^k\} \) in \( \{\bm{n}\}^{\perp} \) with \( \dist (\bm{w}^k, \Kld) \to 0 \) along which both sides of~\eqref{eq:entropic-error-bound-F-d} vanish at the same order of magnitude.
	Recall that we assumed that \( d > 1 \); see the discussions following \eqref{eq:exponential-form-log-det-cone-dual}.\footnote{When \( d = 1 \), the log-determinant cone reduces to the exponential cone studied in \cite{LiLoPo20}, where the tightness of the corresponding error bounds was shown in Remark 4.14 therein.}
	Let \( \bm{n} = (0, \bm{n}_y, \bm{0}) \) with \( \bm{n}_y > 0 \) so that \( \{\bm{n}\}^{\perp} \cap \Kld = \Fd \).
    Define \( \bm{w}^k = (d\log (k) / k, 0, I_d) \) for every $k\in \mathbb{N}$. 
	Then \( \{\bm{w}^k\} \subseteq \{\bm{n}\}^{\perp} \).
	Since \( \log (k) / k > 0 \) for any \( k \geq 2 \) and \( \log (k) / k \to 0 \) as \( k \to \infty \), there exists \( \eta > 0 \) such that \( \left\{ \bm{w}^k \right\} \subseteq B(\eta) \).
	Thus, applying~\eqref{eq:entropic-error-bound-F-d}, there exists \( \kappa _B > 0\) such that
	\[ \dist\left(\bm{w}^k, \Fd\right) \leq \kappa _B \mathfrak{g}_{{\rm d}} \left( \dist\left(\bm{w}^k, \Kld\right) \right) \quad \text{for all sufficiently large } k. \]
	Noticing that the projection of \( \bm{w}^k \) onto \( \Fd \) (see \eqref{eq:d-dim-face-F-d}) is given by \( (0, 0, I_d) \), we obtain
	\[ \frac{d\log (k)}{k} = \dist(\bm{w}^k, \Fd) \leq \kappa _B \mathfrak{g}_{{\rm d}} \left( \dist(\bm{w}^k, \Kld) \right).  \]
	Let \( \bm{v}^k = (d\log(k) / k, 1 / k, I_d) \) for every \( k \).
	Then \( \dist(\bm{w}^k, \Kld) \leq 1 / k \) since \( \bm{v}^k \in \Kld\).
	In view of the definition of \( \mathfrak{g}_{{\rm d}} \) (see~\eqref{eq:BS-entropy}) and its monotonicity, we conclude that for large enough \( k \) we have
	\[ \frac{d\log (k)}{k} = \dist(\bm{w}^k, \Fd) \leq \kappa _B \mathfrak{g}_{{\rm d}} (\dist(\bm{w}^k, \Kld)) \leq \kappa_B \frac{\log (k)}{k}. \]
	That means it holds that for all sufficiently large \( k \),
	\[ d \leq \frac{\dist(\bm{w}^k, \Fd)}{\mathfrak{g}_{{\rm d}} (\dist(\bm{w}^k, \Kld))} \leq \kappa _B. \]
	
	Consequently, for any given nonnegative function \( \mathfrak{g}: \R_+ \to \R_+ \) such that \( \lim_{t \downarrow 0} \frac{\mathfrak{g}(t)}{\mathfrak{g}_{{\rm d}} (t)} = 0 \), we have upon noting \( \dist(\bm{w}^k, \Kld) \to 0 \) that
	\[ \frac{\dist(\bm{w}^k, \Fd)}{\mathfrak{g}(\dist(\bm{w}^k, \Kld))} = \frac{\dist(\bm{w}^k, \Fd)}{\mathfrak{g}_{{\rm d}} (\dist(\bm{w}^k, \Kld))} \frac{\mathfrak{g}_{{\rm d}} (\dist(\bm{w}^k, \Kld))}{\mathfrak{g}(\dist(\bm{w}^k, \Kld))} \to \infty , \]
	which shows that the choice of \( \mathfrak{g}_{{\rm d}} \) in~\eqref{eq:entropic-error-bound-F-d} is tight.
\end{remark}

Upon invoking Theorem~\ref{thm:entropic-error-bound-F-d} and \cite[Lemma 3.9]{LiLoPo20}, we  obtain the following one-step facial residual function for \( \Kld \) and \( \bm{n} \).

\begin{corollary}\label{corollary:FRFs-F-d}
	Let \( \bm{n} = (0, \bm{n}_y, \bm{0}) \in \partial \Kld^{*} \) with \( \bm{n}_y > 0 \) such that \( \Fd = \Kld \cap \{\bm{n}\}^{\perp }  \).
	Let \( \gamma _{\bm{n}, t } \) be defined as in~\eqref{eq:def-gamma} with $\stdFace = \Fd$ and \( \mathfrak{g} = \mathfrak{g}_{{\rm d}} \) in~\eqref{eq:BS-entropy}.
	Then the function \( \psi _{\mathcal{K}, \bm{n}} : \R_{+} \times \R_{+} \to \R_{+} \) defined by
	\[ \psi _{\mathcal{K}, \bm{n}} (\epsilon, t) := \max \left\{\epsilon, \epsilon / \| \bm{n} \| \right\} + \max \left\{2, 2\gamma _{\bm{n}, t}^{-1}\right\} \mathfrak{g}_{{\rm d}}\left(\epsilon  + \max\left\{\epsilon , \epsilon / \| \bm{n} \| \right\}\right) \]
	is a one-step facial residual function for \( \Kld  \) and \( \bm{n} \).
\end{corollary}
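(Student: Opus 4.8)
The plan is to read off the claimed $\psi_{\mathcal{K},\bm{n}}$ directly from Definition~\ref{def:one-step-facial-residual-functions}, feeding the entropic error bound of Theorem~\ref{thm:entropic-error-bound-F-d} into the standard ``gluing'' reduction of \cite[Lemma~3.9]{LiLoPo20}. First I would dispose of the structural requirements on $\psi_{\mathcal{K},\bm{n}}$: it is manifestly nonnegative; $\psi_{\mathcal{K},\bm{n}}(0,t)=0$ since $\max\{0,0/\|\bm{n}\|\}=0$ and $\mathfrak{g}_{{\rm d}}(0)=0$; monotonicity in $\epsilon$ is immediate from monotonicity of $s\mapsto\max\{s,s/\|\bm{n}\|\}$ and of $\mathfrak{g}_{{\rm d}}$; and monotonicity in $t$ follows because $\gamma_{\bm{n},t}$ is nonincreasing in $t$ — the infimum in \eqref{eq:def-gamma} is taken over the enlarging feasible set $\partial\Kld\cap B(t)\setminus\Fd$ — hence $\gamma_{\bm{n},t}^{-1}$ is nondecreasing.

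For the key implication, fix $\bm{x}\in\Span\Kld$ (which is all of $\R\times\R\times\mathcal{S}^d$, since $\Kld$ has nonempty interior) and $\epsilon\ge 0$ with $\dist(\bm{x},\Kld)\le\epsilon$ and $\langle\bm{x},\bm{n}\rangle\le\epsilon$; the case $\bm{x}=\bm{0}$ being trivial, set $t:=\|\bm{x}\|>0$ and $\bm{q}:=P_{\{\bm{n}\}^{\perp}}(\bm{x})$. Picking $\bm{p}\in\Kld$ with $\|\bm{x}-\bm{p}\|\le\epsilon$ and using $\bm{n}\in\Kld^{*}$ gives $\langle\bm{x},\bm{n}\rangle\ge\langle\bm{p},\bm{n}\rangle-\|\bm{x}-\bm{p}\|\,\|\bm{n}\|\ge-\epsilon\|\bm{n}\|$, so together with $\langle\bm{x},\bm{n}\rangle\le\epsilon$ we get $\|\bm{x}-\bm{q}\|=|\langle\bm{x},\bm{n}\rangle|/\|\bm{n}\|\le\max\{\epsilon,\epsilon/\|\bm{n}\|\}$; moreover $\|\bm{q}\|\le\|\bm{x}\|=t$ because projection onto a subspace is nonexpansive, so $\bm{q}\in\{\bm{n}\}^{\perp}\cap B(t)$. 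Applying Theorem~\ref{thm:entropic-error-bound-F-d} with $\eta=t$ to $\bm{q}$, recalling $\Fd=\Kld\cap\{\bm{n}\}^{\perp}$ and bounding $\dist(\bm{q},\Kld)\le\|\bm{q}-\bm{x}\|+\dist(\bm{x},\Kld)\le\epsilon+\max\{\epsilon,\epsilon/\|\bm{n}\|\}$, monotonicity of $\mathfrak{g}_{{\rm d}}$ yields $\dist(\bm{q},\Kld\cap\{\bm{n}\}^{\perp})\le\max\{2,2\gamma_{\bm{n},t}^{-1}\}\,\mathfrak{g}_{{\rm d}}(\epsilon+\max\{\epsilon,\epsilon/\|\bm{n}\|\})$. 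One final triangle inequality $\dist(\bm{x},\Kld\cap\{\bm{n}\}^{\perp})\le\|\bm{x}-\bm{q}\|+\dist(\bm{q},\Kld\cap\{\bm{n}\}^{\perp})$ assembles exactly $\psi_{\mathcal{K},\bm{n}}(\epsilon,\|\bm{x}\|)$, which completes the verification.

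There is no genuine obstacle: the whole argument is the standard reduction packaged in \cite[Lemma~3.9]{LiLoPo20}, so one could even phrase the proof as a one-line appeal to that lemma and Theorem~\ref{thm:entropic-error-bound-F-d}. The only two points I would be careful about are (i) upgrading the one-sided hypothesis $\langle\bm{x},\bm{n}\rangle\le\epsilon$ to the two-sided control needed to bound $\|\bm{x}-\bm{q}\|$, which is where membership $\bm{n}\in\Kld^{*}$ enters, and (ii) matching the fixed radius $\eta$ in Theorem~\ref{thm:entropic-error-bound-F-d} to the value $t=\|\bm{x}\|$ so that the constant appearing in the residual function is precisely $\gamma_{\bm{n},t}$, with the correct second argument $t$.
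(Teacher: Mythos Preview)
Your proof is correct and follows exactly the paper's approach: the paper simply invokes Theorem~\ref{thm:entropic-error-bound-F-d} together with \cite[Lemma~3.9]{LiLoPo20}, and what you have written is precisely an explicit unpacking of that lemma. As you note yourself, the one-line citation would suffice.
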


\subsubsection{\( \Fs \): the family of \( \texorpdfstring{({\rm sd}(d - \rank(\bm{n}_Z)) + 1) }{d-rank(n_Z)+1}\)-dimensional faces}\label{subsubsec:FRFs-F-3}

Let \( \eta > 0 \) and let \( \bm{n} \in \partial \Kld^{*} \) be such that \( \Fs = \Kld \cap \{\bm{n}\}^{\perp } \).
Let \( \gamma_{\bm{n}, \eta } \) be defined as in \eqref{eq:def-gamma} with \( \mathcal{F} = \Fs \) and some nondecreasing function \( \mathfrak{g} : \R_+ \to \R_+ \) with \( \mathfrak{g}(0) = 0 \) and \( \mathfrak{g} \geq | \cdot |^{\alpha} \) for some \( \alpha \in (0, 1] \).
If \( \gamma _{\bm{n}, \eta } = 0 \), in view of~\cite[Lemma 3.12]{LiLoPo20}, there exists \( \widehat{\bm{v}} \in \Fs \) and a sequence \( \{\bm{v}^k\} \subset \partial \Kld \cap B(\eta) \setminus \Fs \) such that \eqref{eq:for-contradiction} holds.
As we will see later in the proofs of Theorem~\ref{thm:Holderian-error-bound-F-3-ny>0} and Theorem~\ref{thm:log-type-error-bound-F-3} below, we will encounter the following three cases:
\begin{enumerate}[(I)]
	\item\label{item:ny>=0-vky=0} \( \bm{n}_y \geq 0 \) and \( \bm{v}^k \in \Fd \cap B(\eta) \setminus \Fs \) for all large $k$;
	\item\label{item:ny>0-vky>0} \( \bm{n}_y > 0 \) and \( \bm{v}^k \in \partial \Kld \cap B(\eta) \setminus \Fd \) infinitely often;
	\item\label{item:ny=0-vky>0} \( \bm{n}_y = 0 \) and \( \bm{v}^k \in \partial \Kld \cap B(\eta) \setminus \Fd \) infinitely often.
\end{enumerate}

For case (\ref{item:ny>=0-vky=0}), we have the following lemma which will aid in our further analysis.
One should notice that this lemma holds for both \( \Fs \) and \( \Ff \).

\begin{lemma}\label{lemma:ny>=0-vky=0-Holderian}
	Let \( \bm{n} = (0, \bm{n}_y, \bm{n}_Z) \in \partial \Kld^{*} \setminus \{\bm{0}\} \) with \( \bm{n}_y \geq 0 \) and \( \bm{n}_Z \succeq 0 \) such that \( \mathcal{F} = \Kld \cap \{\bm{n}\}^{\perp } \) with \( \mathcal{F} = \Fs \) or \( \Ff \).
	Let \( \overline{\bm{v}} \in \mathcal{F} \) be arbitrary and \( \{\bm{v}^k\} \subset \Fd \cap B(\eta) \setminus \mathcal{F} \) be such that
	\[ \lim_{k \to \infty} \bm{v}^k = \lim_{k \to \infty} \bm{w}^k = \overline{\bm{v}}, \]
	where \( \bm{w}^k = P_{\{\bm{n}\}^{\perp } } (\bm{v}^k), \bm{u}^k = P_{\mathcal{F}} (\bm{w}^k) \) and \( \bm{w}^k \neq \bm{u}^k \).
	Then
	\[ \liminf_{k \to \infty} \frac{\| \bm{w}^k - \bm{v}^k \| ^{\alpha }}{\| \bm{w}^k - \bm{u}^k \| } \in (0,\infty], \]
	where \( \alpha  \) is defined as in~\eqref{eq:alpha-psd-cone-error-bound} with \( Z \) being \( \bm{n}_Z \).
\end{lemma}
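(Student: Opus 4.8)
The plan is to reduce the whole estimate to the positive semidefinite error bound of Proposition~\ref{prop:psd-error-bound}. For both $\Fs$ and $\Ff$ the face $\mathcal{F}$ is linear in the $x$- and $y$-coordinates, so the only ``curvature'' that could obstruct an error bound lives in the $Z$-block, which is governed by $\psdc\cap\{\bm{n}_Z\}^{\perp}$.

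First I would set up the geometry. Since $\bm{v}^k\in\Fd\cap B(\eta)\setminus\mathcal{F}$ and $\Fd=\Kldii\subseteq\partial\Kld$ by~\eqref{eq:kldboundary}, write $\bm{v}^k=(\bm{v}^k_x,0,\bm{v}^k_Z)$ with $\bm{v}^k_x\le 0$, $\bm{v}^k_Z\succeq 0$ and $\|\bm{v}^k_Z\|_F\le\eta$. Because $\bm{n}=(0,\bm{n}_y,\bm{n}_Z)$ and $\bm{v}^k_y=0$, projecting onto the hyperplane $\{\bm{n}\}^{\perp}$ gives $\bm{w}^k=\bm{v}^k-\frac{\langle\bm{n},\bm{v}^k\rangle}{\|\bm{n}\|^2}\bm{n}$ with $\langle\bm{n},\bm{v}^k\rangle=\tr(\bm{n}_Z\bm{v}^k_Z)\ge 0$, so that $\|\bm{w}^k-\bm{v}^k\|=\tr(\bm{n}_Z\bm{v}^k_Z)/\|\bm{n}\|$.

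Next I would control the denominator. Lemma~\ref{lemma:norm-wk-uk} applies (the hypotheses $\bm{v}^k\in\partial\Kld\cap B(\eta)\setminus\mathcal{F}$, $\bm{w}^k\ne\bm{u}^k$, and $\mathcal{F}$ a nontrivial exposed face are all met), giving $\|\bm{w}^k-\bm{u}^k\|\le\dist(\bm{v}^k,\mathcal{F})$. Since $\mathcal{F}=\R_-\times\{0\}\times(\psdc\cap\{\bm{n}_Z\}^{\perp})$ --- with $\psdc\cap\{\bm{n}_Z\}^{\perp}=\{\bm{0}\}$ in the $\Ff$ case, where $\bm{n}_Z\succ 0$ --- and $\bm{v}^k$ already has nonpositive $x$-part and zero $y$-part, the block structure of the inner product separates coordinates, so $\dist(\bm{v}^k,\mathcal{F})=\dist(\bm{v}^k_Z,\psdc\cap\{\bm{n}_Z\}^{\perp})$. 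Now I would invoke Proposition~\ref{prop:psd-error-bound} with $Z=\bm{n}_Z$ (nonzero in both cases, since $0<\rank(\bm{n}_Z)<d$ for $\Fs$ and $\rank(\bm{n}_Z)=d$ for $\Ff$) and $Y=\bm{v}^k_Z\in\psdc\cap B(\eta)$; for the fixed $\bm{n}_Z$ and $\eta$ this produces $C_P>0$ with $\dist(\bm{v}^k_Z,\psdc\cap\{\bm{n}_Z\}^{\perp})\le C_P\,\tr(\bm{v}^k_Z\bm{n}_Z)^{\alpha}$, where $\alpha$ is precisely the exponent in the statement. Chaining the three relations,
\[
\|\bm{w}^k-\bm{u}^k\|\ \le\ C_P\,\tr(\bm{n}_Z\bm{v}^k_Z)^{\alpha}\ =\ C_P\|\bm{n}\|^{\alpha}\,\|\bm{w}^k-\bm{v}^k\|^{\alpha},
\]
and noting $\tr(\bm{n}_Z\bm{v}^k_Z)>0$ (otherwise $\bm{v}^k_Z\in\psdc\cap\{\bm{n}_Z\}^{\perp}$, which would force $\bm{v}^k\in\mathcal{F}$), we obtain $\|\bm{w}^k-\bm{v}^k\|^{\alpha}/\|\bm{w}^k-\bm{u}^k\|\ge(C_P\|\bm{n}\|^{\alpha})^{-1}>0$ for every $k$; taking $\liminf$ finishes the proof.

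There is no genuine obstacle here --- the work is carried out by Lemma~\ref{lemma:norm-wk-uk} and Proposition~\ref{prop:psd-error-bound}, and in fact the convergence hypotheses $\bm{v}^k\to\overline{\bm{v}}$, $\bm{w}^k\to\overline{\bm{v}}$ are not used (they are kept for compatibility with how the lemma is later invoked). The only points requiring a little care are the coordinate-wise reduction $\dist(\bm{v}^k,\mathcal{F})=\dist(\bm{v}^k_Z,\psdc\cap\{\bm{n}_Z\}^{\perp})$, which must handle both face types uniformly, and verifying that the exponent produced by the PSD error bound with $Z=\bm{n}_Z$ coincides with the $\alpha$ in the statement: $1/2$ when $\rank(\bm{n}_Z)<d$ (the $\Fs$ case) and $1$ when $\bm{n}_Z\succ 0$ (the $\Ff$ case).
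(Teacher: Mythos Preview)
Your proposal is correct and follows essentially the same approach as the paper: write $\bm{v}^k=(\bm{v}^k_x,0,\bm{v}^k_Z)$, compute $\|\bm{w}^k-\bm{v}^k\|=\tr(\bm{n}_Z\bm{v}^k_Z)/\|\bm{n}\|$, bound $\|\bm{w}^k-\bm{u}^k\|\le\dist(\bm{v}^k,\mathcal{F})=\dist(\bm{v}^k_Z,\psdc\cap\{\bm{n}_Z\}^{\perp})$ via Lemma~\ref{lemma:norm-wk-uk} and the block structure of $\mathcal{F}$, and then apply Proposition~\ref{prop:psd-error-bound} to obtain the lower bound $(C_P\|\bm{n}\|^{\alpha})^{-1}$. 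Your additional observations---that the convergence hypotheses on $\bm{v}^k,\bm{w}^k$ are not actually needed, and that $\tr(\bm{n}_Z\bm{v}^k_Z)>0$ follows from $\bm{v}^k\notin\mathcal{F}$---are correct refinements that the paper leaves implicit.
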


\begin{proof}
	Note that \( \{\bm{v}^k\} \subset \Fd \cap B(\eta) \setminus \mathcal{F}  \) implies \( \bm{v}^k = (\bm{v}_x^k, 0, \bm{v}_Z^k) \) with \( \bm{v}_x^k \leq 0 \) and \( \bm{v}_Z^k \in \psdc \) for all \( k \).
	Then, \( \langle \bm{n}, \bm{v}^k \rangle = \tr(\bm{v}_Z^k \bm{n}_Z)\), which is nonnegative since both \( \bm{v}_Z^k  \) and \( \bm{n}_Z  \) are positive semidefinite.
	Because \( \bm{w}^k = P_{\{\bm{n}\}^{\perp } }(\bm{v}^k) \) and \( \{\bm{n}\}^{\perp }  \) is a hyperplane, one can immediately see that for all \( k \),
	\[ \| \bm{w}^k - \bm{v}^k \| = \frac{| \langle \bm{n}, \bm{v}^k \rangle  | }{\| \bm{n} \| } = \frac{\tr(\bm{v}_Z^k \bm{n}_Z)}{\| \bm{n} \| }. \]
	On the other hand, by Lemma~\ref{lemma:norm-wk-uk} and the formula of \( \mathcal{F} \), we obtain that for all \( k \),
	\begin{align*}
		\| \bm{w}^k - \bm{u}^k \| \leq \dist(\bm{v}^k, \mathcal{F}) = \dist(\bm{v}_Z^k, \psdc \cap \{\bm{n}_Z\}^{\perp }) \leq  C_P \tr(\bm{v}_Z^k \bm{n}_Z)^{\alpha },
	\end{align*}
	where the final inequality comes from Proposition~\ref{prop:psd-error-bound} and \( \alpha  \) is defined as in~\eqref{eq:alpha-psd-cone-error-bound} with \( Z \) being \( \bm{n}_Z \).

	Now, we can conclude that
	\[ \liminf_{k \to \infty} \frac{\| \bm{w}^k - \bm{v}^k \| ^{\alpha }}{\| \bm{w}^k - \bm{u}^k \| } \geq \frac{1}{C_P \| \bm{n} \|^{\alpha }} > 0. \]
	This completes the proof.
\end{proof}

Now, we are ready to show the error bound concerning \( \Fs  \).
We first show that we have a H\"olderian error bound concerning \( \Fs  \) when \( \bm{n}_y > 0 \).
\begin{theorem}[H\"olderian error bound concerning \( \Fs  \) if \( \bm{n}_y > 0 \)]\label{thm:Holderian-error-bound-F-3-ny>0}
	Let \( \bm{n} = (0, \bm{n}_y, \bm{n}_Z) \in \partial \Kld^{*} \) with \( \bm{n}_y > 0\), \(\bm{n}_Z \succeq 0 \) and \( 0 < \rank(\bm{n}_Z) < d \) such that \( \Fs = \Kld \cap \{\bm{n}\}^{\perp } \).
	Let \( \eta > 0 \) and let \( \gamma _{\bm{n}, \eta } \) be defined as in~\eqref{eq:def-gamma} with \( \mathcal{F} = \Fs \) and \( \mathfrak{g} = | \cdot | ^{\frac{1}{2}} \).
	Then \( \gamma _{\bm{n}, \eta } \in (0, \infty] \) and
	\begin{equation}
		\label{eq:Holderian-error-bound-F-3}
		\dist(\bm{q}, \Fs) \leq \max \{2\eta ^{\frac{1}{2}}, 2\gamma _{\bm{n}, \eta }^{-1}\} \cdot (\dist(\bm{q}, \Kld))^{\frac{1}{2}} \quad\quad \forall \bm{q} \in \{\bm{n}\}^{\perp } \cap B(\eta).
	\end{equation}
\end{theorem}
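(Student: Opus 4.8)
The plan is to argue by contradiction, following the template of the proof of Theorem~\ref{thm:entropic-error-bound-F-d}. Suppose $\gamma_{\bm{n},\eta}=0$. Then by \cite[Lemma 3.12]{LiLoPo20} there exist $\widehat{\bm{v}}\in\Fs$ and a sequence $\{\bm{v}^k\}\subset\partial\Kld\cap B(\eta)\setminus\Fs$ satisfying \eqref{eq:for-contradiction} with $\mathfrak{g}=|\cdot|^{1/2}$ and $\stdFace=\Fs$, so it suffices to produce a subsequence along which $\|\bm{w}^k-\bm{v}^k\|^{1/2}/\|\bm{w}^k-\bm{u}^k\|$ stays bounded away from $0$. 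Since $\partial\Kld=\Kldie\cup\Kldii$ by \eqref{eq:kldboundary} and $\Kldii=\Fd$, after passing to a subsequence either $\bm{v}^k\in\Fd$ for every $k$ (so $\bm{v}_y^k=0$), which is case~(\ref{item:ny>=0-vky=0}), or $\bm{v}^k\in\partial\Kld\setminus\Fd$ for every $k$ (so $\bm{v}^k\in\Kldie$ and $\bm{v}_y^k>0$), which is case~(\ref{item:ny>0-vky>0}).

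Case~(\ref{item:ny>=0-vky=0}) is immediate from Lemma~\ref{lemma:ny>=0-vky=0-Holderian}: because $\rank(\bm{n}_Z)<d$, the exponent $\alpha$ from Proposition~\ref{prop:psd-error-bound} for $Z=\bm{n}_Z$ is $1/2$, and that lemma gives $\liminf_{k}\|\bm{w}^k-\bm{v}^k\|^{1/2}/\|\bm{w}^k-\bm{u}^k\|\in(0,\infty]$, contradicting \eqref{eq:for-contradiction}. The substance lies in case~(\ref{item:ny>0-vky>0}), where $\bm{v}^k=(\bm{v}_y^k\ldt(\bm{v}_Z^k/\bm{v}_y^k),\bm{v}_y^k,\bm{v}_Z^k)$ with $\bm{v}_y^k>0$ and $\bm{v}_Z^k\succ0$. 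Since $\bm{n}=(0,\bm{n}_y,\bm{n}_Z)$ with $\bm{n}_y>0$ and $\bm{n}_Z\succeq0$, both terms of $\langle\bm{n},\bm{v}^k\rangle=\bm{n}_y\bm{v}_y^k+\tr(\bm{n}_Z\bm{v}_Z^k)$ are nonnegative, and since $\{\bm{n}\}^{\perp}$ is a hyperplane,
\[
\|\bm{w}^k-\bm{v}^k\|=\frac{\langle\bm{n},\bm{v}^k\rangle}{\|\bm{n}\|}\geq\frac{\bm{n}_y}{\|\bm{n}\|}\bm{v}_y^k
\quad\text{and}\quad
\|\bm{w}^k-\bm{v}^k\|\geq\frac{1}{\|\bm{n}\|}\tr(\bm{n}_Z\bm{v}_Z^k).
\]
In particular $\bm{v}_y^k\to0$, using also that $\|\bm{w}^k-\bm{v}^k\|\to0$.

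The goal is then to bound $\|\bm{w}^k-\bm{u}^k\|$ by a constant multiple of $\|\bm{w}^k-\bm{v}^k\|^{1/2}$. By Lemma~\ref{lemma:norm-wk-uk}, $\|\bm{w}^k-\bm{u}^k\|\leq\dist(\bm{v}^k,\Fs)$, and comparing $\bm{v}^k$ with the point $\bigl(\min\{0,\bm{v}_y^k\ldt(\bm{v}_Z^k/\bm{v}_y^k)\},\,0,\,P_{\psdc\cap\{\bm{n}_Z\}^{\perp}}(\bm{v}_Z^k)\bigr)\in\Fs$ yields
\[
\dist(\bm{v}^k,\Fs)^2\leq\bigl((\bm{v}_y^k\ldt(\bm{v}_Z^k/\bm{v}_y^k))_+\bigr)^2+(\bm{v}_y^k)^2+\dist\bigl(\bm{v}_Z^k,\psdc\cap\{\bm{n}_Z\}^{\perp}\bigr)^2 .
\]
I would bound each summand by $O(\|\bm{w}^k-\bm{v}^k\|)$: the last via Proposition~\ref{prop:psd-error-bound} (exponent $1/2$, squaring to $1$) together with $\tr(\bm{n}_Z\bm{v}_Z^k)\leq\|\bm{n}\|\,\|\bm{w}^k-\bm{v}^k\|$; the middle since $(\bm{v}_y^k)^2\leq(\|\bm{n}\|/\bm{n}_y)^2\|\bm{w}^k-\bm{v}^k\|^2$ and $\|\bm{w}^k-\bm{v}^k\|\to0$; and the first, when $\bm{v}_y^k\ldt(\bm{v}_Z^k/\bm{v}_y^k)>0$, by feeding $\bm{v}^k$ into \eqref{eq:vky-ldt-vkz/vky-upper-bound-by-eta} to obtain $0\leq\bm{v}_y^k\ldt(\bm{v}_Z^k/\bm{v}_y^k)\leq d\bm{v}_y^k|\log\eta|-d\bm{v}_y^k\log\bm{v}_y^k$ and then using $t\leq\sqrt{t}$ and $-t\log t\leq\sqrt{t}$ from \eqref{eq:relationships-t-a-tlogt-1/logt} (with $t=\bm{v}_y^k$, which is small) to get $\bigl((\bm{v}_y^k\ldt(\bm{v}_Z^k/\bm{v}_y^k))_+\bigr)^2\leq C\bm{v}_y^k\leq C'\|\bm{w}^k-\bm{v}^k\|$. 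Altogether $\dist(\bm{v}^k,\Fs)^2\leq C''\|\bm{w}^k-\bm{v}^k\|$ for all large $k$, so $\|\bm{w}^k-\bm{v}^k\|^{1/2}/\|\bm{w}^k-\bm{u}^k\|\geq(C'')^{-1/2}>0$, again contradicting \eqref{eq:for-contradiction}.

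Both cases being impossible, $\gamma_{\bm{n},\eta}\in(0,\infty]$; then \eqref{eq:Holderian-error-bound-F-3} follows from \cite[Theorem 3.10]{LiLoPo20} and \cite[Lemma 3.9]{LiLoPo20}, exactly as \eqref{eq:entropic-error-bound-F-d} was deduced in Theorem~\ref{thm:entropic-error-bound-F-d}. I expect the main obstacle to be case~(\ref{item:ny>0-vky>0}), and within it the only delicate point to be the estimate for the $x$-coordinate $\bm{v}_y^k\ldt(\bm{v}_Z^k/\bm{v}_y^k)$ when it is positive: unlike in the $\Fd$ analysis the exposing normal $\bm{n}$ here carries a nonzero $Z$-block, so $\bm{w}^k$ has no tidy closed form, and one must instead exploit $\bm{v}^k\in B(\eta)$ and the logarithmic inequalities to see that this coordinate, when positive, is $o(\sqrt{\bm{v}_y^k})$ and hence negligible against $\|\bm{w}^k-\bm{v}^k\|^{1/2}$.
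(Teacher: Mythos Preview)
Your proposal is correct and follows essentially the same route as the paper: contradiction via \cite[Lemma~3.12]{LiLoPo20}, the same two-case split using $\partial\Kld=\Kldie\cup\Fd$, Lemma~\ref{lemma:ny>=0-vky=0-Holderian} for the $\Fd$ case, and in the $\Kldie$ case the combination of Lemma~\ref{lemma:norm-wk-uk}, Proposition~\ref{prop:psd-error-bound}, inequality~\eqref{eq:vky-ldt-vkz/vky-upper-bound-by-eta}, and \eqref{eq:relationships-t-a-tlogt-1/logt}. The only cosmetic differences are that you bound $\dist(\bm{v}^k,\Fs)^2$ by a sum of three squares (exploiting the product structure of $\Fs$) whereas the paper uses the triangle inequality to bound $\dist(\bm{v}^k,\Fs)$ by a sum of three terms, and you absorb the sign dichotomy for $\bm{v}_y^k\ldt(\bm{v}_Z^k/\bm{v}_y^k)$ into $(\cdot)_+$ where the paper treats the two signs as separate subcases; both lead to the same estimate $\|\bm{w}^k-\bm{u}^k\|\le C\,\|\bm{w}^k-\bm{v}^k\|^{1/2}$. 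One small slip: in your closing commentary the $x$-coordinate bound is $O(\sqrt{\bm{v}_y^k})$, not $o(\sqrt{\bm{v}_y^k})$, but your actual computation uses the correct $O$-estimate and this is all that is needed.
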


\begin{proof}
	If \( \gamma _{\bm{n}, \eta } = 0 \), in view of~\cite[Lemma 3.12]{LiLoPo20}, there exist \( \widehat{\bm{v}}\in \Fs \) and a sequence \( \{\bm{v}^k\}\subset \partial \Kld \cap B(\eta)\setminus \Fs \) such that \eqref{eq:for-contradiction} holds with \( \mathfrak{g} = | \cdot |^{\frac{1}{2}} \) and $\stdFace = \Fs$.
	Since \( \{\bm{v}^k\} \subset \partial \Kld \cap B(\eta) \setminus \Fs \), the equation for the boundary of $\Kld$ (see \eqref{eq:kldboundary} and \eqref{eq:relationships-faces}) implies that we have the following two cases:
	\begin{enumerate}[(i)]
		\item\label{item:F-3-ny>0-vk-not-F-d} \( \bm{v}^k \in \partial \Kld \cap B(\eta) \setminus \Fd \) infinitely often;
		\item\label{item:F-3-ny>0-vk-F-d} \( \bm{v}^k \in \Fd \cap B(\eta) \setminus \Fs \) for all large \( k \).
	\end{enumerate}

	(\ref{item:F-3-ny>0-vk-not-F-d})
	 Passing to a subsequence if necessary, we can assume that \( \bm{v}^k \in \partial \Kld \cap B(\eta) \setminus \Fd \) for all \( k \), that is,
	\[ \bm{v}^k = (\bm{v}_y^k \ldt (\bm{v}_Z^k / \bm{v}_y^k ), \bm{v}_y^k, \bm{v}_Z^k) \text{ with }  \bm{v}_y^k > 0, \bm{v}_Z^k \succ 0, \quad \text{for all } k. \]
	Then, \( \langle \bm{n}, \bm{v}^k \rangle = \bm{n}_y \bm{v}_y^k + \tr(\bm{v}_Z^k \bm{n}_Z) \), which is positive since \( \bm{n}_y > 0, \bm{v}_y^k > 0 \) and both \( \bm{v}_Z^k , \bm{n}_Z  \) are positive semidefinite.

	Now, one can check that
	\begin{equation}
    \label{eq:F-3-case-1}
    \| \bm{w}^k - \bm{v}^k \| = \frac{\langle \bm{n}, \bm{v}^k \rangle }{\| \bm{n} \| } = \frac{\bm{n}_y \bm{v}_y^k + \tr(\bm{v}_Z^k \bm{n}_Z)}{\| \bm{n} \| } .
  \end{equation}
	On the other hand, by Lemma~\ref{lemma:norm-wk-uk}, the formula of \( \Fs \) and Proposition~\ref{prop:psd-error-bound}, we  obtain the following inequality for all \( k \),
	\begin{equation}\label{eq:up-bd-wk-uk-F3}
      \| \bm{w}^k - \bm{u}^k \| \leq \dist(\bm{v}^k, \Fs) \leq ( \bm{v}_y^k \ldt (\bm{v}_Z^k / \bm{v}_y^k) )_+ + \bm{v}_y^k + C_P \tr(\bm{v}_Z^k \bm{n}_Z)^{\frac{1}{2}}.
  \end{equation}

	Let \( \tau ^k := \tr(\bm{v}_Z^k \bm{n}_Z) \) and \( \mathsf{r} := \rank(\bm{n}_Z) \).

	If \( \bm{v}_y^k \ldt (\bm{v}_Z^k / \bm{v}_y^k) \geq 0 \) infinitely often, then by extracting a subsequence if necessary, we may assume that \( \bm{v}_y^k \ldt (\bm{v}_Z^k / \bm{v}_y^k) \geq 0 \) for all \( k \).
  Then we have from \eqref{eq:up-bd-wk-uk-F3} and \eqref{eq:vky-ldt-vkz/vky-upper-bound-by-eta} that for all large \( k \),
	\begin{align*}
		\| \bm{w}^k - \bm{u}^k \| & \leq d |\log(\eta)|\bm{v}_y^k - d\bm{v}_y^k \log (\bm{v}_y^k) + \bm{v}_y^k + C_P(\tau^k)^{\frac{1}{2}}                                                                                                        \\
		                          & \overset{(\text{a})}{\leq } (d |\log(\eta)| + 1) (\bm{v}_y^k)^{\frac{1}{2}} + d (\bm{v}_y^k)^{\frac{1}{2}} + C_P(\tau^k)^{\frac{1}{2}}                                                                                               \\
		                          & \overset{(\text{b})}{\leq } \frac{(d |\log(\eta)| + d + 1)\| \bm{n} \| ^{\frac{1}{2}}}{(\bm{n}_y)^{\frac{1}{2}}} \| \bm{w}^k - \bm{v}^k \| ^{\frac{1}{2}} + C_P \| \bm{n} \| ^{\frac{1}{2}} \| \bm{w}^k - \bm{v}^k \| ^{\frac{1}{2}} \\
		                          & = \left[\frac{(d |\log(\eta)| + d + 1)\| \bm{n} \| ^{\frac{1}{2}}}{(\bm{n}_y)^{\frac{1}{2}}} + C_P \| \bm{n} \| ^{\frac{1}{2}}\right] \| \bm{w}^k - \bm{v}^k \| ^{\frac{1}{2}},
	\end{align*}
	where (a) holds by~\eqref{eq:relationships-t-a-tlogt-1/logt}  with \( \alpha=1 \) and the fact that \( \bm{v}_y^k \to 0 \) (since ${\bm v^k} \to \widehat{\bm v}\in \Fs$), (b) is true since \( \| \bm{w}^k - \bm{v}^k \|^{\frac{1}{2}} \geq (\bm{n}_y \bm{v}_y^k)^{\frac{1}{2}} / (\| \bm{n} \| )^{\frac{1}{2}} \) and \( \| \bm{w}^k - \bm{v}^k \| ^{\frac{1}{2}} \geq (\tau^k)^{\frac{1}{2}} / (\| \bm{n} \| )^{\frac{1}{2}} \) for all \( k \) thanks to \eqref{eq:F-3-case-1}.

	This contradicts~\eqref{eq:for-contradiction} with \( | \cdot |^{\frac{1}{2}} \) in place of \( \mathfrak{g} \) and hence this case cannot happen.

	If \( \bm{v}_y^k \ldt (\bm{v}_Z^k / \bm{v}_y^k) < 0 \) infinitely often, then by extracting a subsequence if necessary, we may assume that \( \bm{v}_y^k \ldt (\bm{v}_Z^k / \bm{v}_y^k) < 0 \) for all \( k \).
  Similar to the previous analysis, we have from \eqref{eq:up-bd-wk-uk-F3}, \eqref{eq:relationships-t-a-tlogt-1/logt} and \eqref{eq:F-3-case-1} that for all large \( k \),
	\[ \| \bm{w}^k - \bm{u}^k \|  \leq  \bm{v}_y^k + C_P (\tau^k)^{\frac{1}{2}} \leq (\bm{v}_y^k)^{\frac{1}{2}} + C_P (\tau^k)^{\frac{1}{2}} \leq \left[ (\| \bm{n} \| / \bm{n}_y)^{\frac{1}{2}} + C_P \| \bm{n} \| ^{\frac{1}{2}} \right] \| \bm{w}^k - \bm{v}^k \|^{\frac{1}{2}}. \]
	The above display contradicts~\eqref{eq:for-contradiction} with \( | \cdot |^{\frac{1}{2}} \) in place of \( \mathfrak{g} \) and hence this case cannot happen.

	(\ref{item:F-3-ny>0-vk-F-d}) By Lemma~\ref{lemma:ny>=0-vky=0-Holderian}, case (\ref{item:F-3-ny>0-vk-F-d}) also cannot happen.

	Hence, we conclude that \( \gamma _{\bm{n}, \eta } \in (0,\infty] \).
	In view of \cite[Theorem 3.10]{LiLoPo20}, we deduce that~\eqref{eq:Holderian-error-bound-F-3} holds.
\end{proof}

\begin{remark}[Tightness of~\eqref{eq:Holderian-error-bound-F-3}]\label{remark:tightness-Holderian-F-3}
  Fix any $0<{\sf r}<d$ (recall that we assumed $d\ge 2$; see the discussions following \eqref{eq:exponential-form-log-det-cone-dual}).
  Let \( \bm{n} = (0, \bm{n}_y, \bm{n}_Z) \) with \( \bm{n}_y > 0 \), \( \bm{n}_Z \succeq 0 \) and \( \rank(\bm{n}_Z) = {\sf r} \).
  Then, we have \( \Fs = \Kld \cap \{\bm{n}\}^{\perp} \) from \eqref{eq:n-dim-faces-F-n}.
  Let \( R \in \R^{d \times d} \) be such that \( \bm{n}_Z = R
  \begin{bmatrix}
    \bm{0} & \bm{0} \\
    \bm{0} & \Sigma_{\mathsf{r}}
  \end{bmatrix}R^{\top}
  \) where \( \Sigma_{\mathsf{r}}\in {\cal S}^{{\sf r}} \) is diagonal, \( \Sigma_{\mathsf{r}} \succ 0 \) and \( RR^{\top} = I_d \).
Then
  \begin{equation}\label{Fsharpexample}
   \Fs = \R_- \times \{0\} \times (\psdc \cap \{\bm{n}_Z\}^{\perp}) = \R_- \times \{0\} \times \left\{ R
     \begin{bmatrix}
       A & \bm{0} \\
       \bm{0} & \bm{0}
     \end{bmatrix}R^{\top} \, : \, A \in \mathcal{S}_+^{d-\mathsf{r}}
   \right\}.
  \end{equation}

  Fix a $Q\in \R^{{\sf r}\times(d-{\sf r})}$ with $0 < \lambda_{\max}(Q^\top Q)\le 1$. For every \( k > 0 \), we define
  \[
   \bm{w}^k = \left( -1, 0, R
     \begin{bmatrix}
       I_{d-\mathsf{r}} & \frac{Q^\top}{k} \\
       \frac{Q}{k} & \bm{0}
     \end{bmatrix}R^{\top}
   \right) \text{ and } \bm{v}^k = \left( -1, 0, R
     \begin{bmatrix}
       I_{d-\mathsf{r}} & \frac{Q^\top}{k} \\
       \frac{Q}{k} & \frac{I_{\mathsf{r}}}{k^2}
     \end{bmatrix}R^{\top}
    \right).
  \]
  Then there exists \( \eta > 0 \) such that \( \{\bm{w}^k\} \subset \{\bm{n}\}^{\perp} \cap B(\eta) \).
  We also observe that \( R
     \begin{bmatrix}
       I_{d-\mathsf{r}} & \frac{Q^\top}{k} \\
       \frac{Q}{k} & \frac{I_{\mathsf{r}}}{k^2}
     \end{bmatrix}R^{\top} \succeq 0 \) for all \( k \) based on standard arguments involving the Schur complement.
  Then \( \{\bm{v}^k\} \subset \Fd \subset \Kld \).
  With that, we have
  \[ \dist(\bm{w}^k, \Kld) \leq \| \bm{w}^k - \bm{v}^k \| = \left\| \frac{I_{\mathsf{r}}}{k^2} \right\|_F = \frac{\sqrt{\mathsf{r}}}{k^2}. \]

  Therefore, by applying \eqref{eq:Holderian-error-bound-F-3} and using \eqref{Fsharpexample}, there exists \( \kappa_B > 0 \) such that
  \[
   0 < \frac{\sqrt{2}\|Q\|_F}{k} = \dist(\bm{w}^k, \Fs) \leq \kappa_B \dist(\bm{w}^k, \Kld)^{\frac{1}{2}} \leq \frac{\kappa_B\mathsf{r}^{\frac{1}{4}}}{k}.
  \]
  Consequently, for all \( k \), we have
  \[
   0 < \frac{\sqrt{2}\|Q\|_F}{\mathsf{r}^{\frac{1}{4}}} \leq \frac{\dist(\bm{w}^k, \Fs)}{\dist(\bm{w}^k, \Kld)^{\frac{1}{2}}} \leq \kappa_B.
  \]
  Similar to the argument in Remark \ref{remark:tightness-F-d}, we conclude that the choice of \( | \cdot |^{\frac{1}{2}} \) is tight.
\end{remark}
Next, we consider the case where \( \bm{n}_y = 0 \).
Define \( \mathfrak{g}_{\log} \) as follows
\begin{equation}\label{eq:g-log}
	\mathfrak{g}_{\log}(t) := \left\{
	\begin{array}{ll}
		0                             & \text{ if } t = 0,                    \\
		-\frac{1}{\log (t)}           & \text{ if } 0 < t \leq \frac{1}{e^2}, \\
		\frac{1}{4} + \frac{1}{4}e^2t & \text{ if } t > \frac{1}{e^2}.
	\end{array}\right.
\end{equation}
We note that \( \mathfrak{g}_{\log} \) is increasing with \( \mathfrak{g}_{\log} (0) = 0 \) and \( | t | \leq \mathfrak{g}_{\log} (t) \) for all \( t \in \R_{+} \).
Moreover, \( \mathfrak{g}_{\log} (t) > \mathfrak{g}_{{\rm d}} (t) \) for any \( t \in (0, \frac{1}{e^2}) \).
With \( \mathfrak{g}_{\log} \), the next theorem shows that \( \gamma _{\bm{n}, \eta } \in (0, \infty] \) for \( \Fs \), which implies that a log-type error bound holds.

\begin{theorem}[Log-type error bound concerning \( \Fs \) if \( \bm{n}_y = 0 \)]\label{thm:log-type-error-bound-F-3}
	Let \( \bm{n} = (0, 0, \bm{n}_Z) \in \partial \Kld^{*} \) with \( \bm{n}_Z \succeq 0 \) and \( 0 < \rank(\bm{n}_Z) < d \) such that \( \Fs = \Kld \cap \{\bm{n}\}^{\perp } \).
	Let \( \eta > 0 \) and let \( \gamma _{\bm{n}, \eta } \) be defined as in~\eqref{eq:def-gamma} with \( \mathcal{F} = \Fs \) and \( \mathfrak{g} = \mathfrak{g}_{\log} \) in~\eqref{eq:g-log}.
	Then \( \gamma _{\bm{n}, \eta } \in (0, \infty] \) and
	\begin{equation}
		\label{eq:log-type-error-bound-F-3}
		\dist(\bm{q}, \Fs) \leq \max \{2, 2\gamma _{\bm{n}, \eta }^{-1}\} \cdot \mathfrak{g}_{\log} (\dist(\bm{q}, \Kld)) \quad\quad \forall \bm{q} \in \{\bm{n}\}^{\perp } \cap B(\eta).
	\end{equation}
\end{theorem}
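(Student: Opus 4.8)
The plan is to argue by contradiction that $\gamma_{\bm{n},\eta}>0$, following the template used in the proofs of Theorem~\ref{thm:entropic-error-bound-F-d} and Theorem~\ref{thm:Holderian-error-bound-F-3-ny>0}: once $\gamma_{\bm{n},\eta}\in(0,\infty]$ is in hand, \eqref{eq:log-type-error-bound-F-3} follows from \cite[Theorem~3.10]{LiLoPo20} (which also supplies the constant $\max\{2,2\gamma_{\bm{n},\eta}^{-1}\}$). So suppose $\gamma_{\bm{n},\eta}=0$. By \cite[Lemma~3.12]{LiLoPo20} there exist $\widehat{\bm{v}}\in\Fs$ and a sequence $\{\bm{v}^k\}\subset\partial\Kld\cap B(\eta)\setminus\Fs$ satisfying \eqref{eq:for-contradiction} with $\mathfrak{g}=\mathfrak{g}_{\log}$ and $\mathcal{F}=\Fs$, where $\bm{w}^k=P_{\{\bm{n}\}^{\perp}}(\bm{v}^k)$, $\bm{u}^k=P_{\Fs}(\bm{w}^k)\ne\bm{w}^k$. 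Since $\partial\Kld=\Kldie\cup\Fd$ by \eqref{eq:kldboundary} and \eqref{eq:d-dim-face-F-d}, and $\Fs\properideal\Fd$, after passing to a subsequence we land in case (I) or case (III) from the list preceding Lemma~\ref{lemma:ny>=0-vky=0-Holderian} (case (II) cannot occur since $\bm{n}_y=0$): either $\bm{v}^k\in\Fd\cap B(\eta)\setminus\Fs$ for all $k$, or $\bm{v}^k\in\Kldie\cap B(\eta)$ for all $k$. Case (I) is dispatched by Lemma~\ref{lemma:ny>=0-vky=0-Holderian}: since $0<\rank(\bm{n}_Z)<d$ we have $\alpha=\tfrac12$, so $\liminf_k\|\bm{w}^k-\bm{v}^k\|^{1/2}/\|\bm{w}^k-\bm{u}^k\|>0$; combining this with $\|\bm{w}^k-\bm{v}^k\|\to0$ and $t^{1/2}\le\mathfrak{g}_{\log}(t)$ for small $t>0$ (the third inequality in \eqref{eq:relationships-t-a-tlogt-1/logt}) contradicts \eqref{eq:for-contradiction}. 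It remains to exclude case (III).

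In case (III) write $\bm{v}^k=(\bm{v}_y^k\ldt(\bm{v}_Z^k/\bm{v}_y^k),\bm{v}_y^k,\bm{v}_Z^k)$ with $\bm{v}_y^k>0$ and $\bm{v}_Z^k\succ0$, and set $\mathsf{r}:=\rank(\bm{n}_Z)$, $\tau^k:=\tr(\bm{v}_Z^k\bm{n}_Z)$; by \eqref{eqn:traceproperties} $\tau^k>0$, while $\bm{v}^k\to\widehat{\bm{v}}\in\Fs$ forces $\bm{v}_y^k\to0$ and $\tau^k\to0$. Because $\bm{n}=(0,0,\bm{n}_Z)$ we have $\langle\bm{n},\bm{v}^k\rangle=\tau^k$, hence $\|\bm{w}^k-\bm{v}^k\|=\tau^k/\|\bm{n}\|$. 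Using Lemma~\ref{lemma:norm-wk-uk}, the form \eqref{eq:n-dim-faces-F-n} of $\Fs$, and Proposition~\ref{prop:psd-error-bound} (with $Z=\bm{n}_Z$, hence $\alpha=\tfrac12$), I will show
\[
\|\bm{w}^k-\bm{u}^k\|^2\le(\bm{v}_x^k)_+^2+(\bm{v}_y^k)^2+C_P^2\tau^k,\qquad \bm{v}_x^k:=\bm{v}_y^k\log\det(\bm{v}_Z^k)-d\bm{v}_y^k\log\bm{v}_y^k.
\]
The crucial preliminary estimate is that $\bm{v}_y^k\le M_0/|\log\tau^k|$ for all large $k$ and some $M_0>0$: indeed $|\bm{v}_x^k|\le\eta$ together with $d\bm{v}_y^k\log\bm{v}_y^k\to0$ gives $\bm{v}_y^k\,(-\log\det(\bm{v}_Z^k))\le2\eta$ for large $k$, whereas Lemma~\ref{lemma:rel-det-tr-Z-rank} applied to $\bm{v}_Z^k\in B(\eta)\cap\psdc$ (with $Z=\bm{n}_Z$) yields $-\log\det(\bm{v}_Z^k)\ge\mathsf{r}|\log\tau^k|-d\log C\ge\tfrac{\mathsf{r}}{2}|\log\tau^k|$ for large $k$; the same lemma also gives $\log\det(\bm{v}_Z^k)\le d\log C+\mathsf{r}\log\tau^k$, used below.

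To finish, I bound each of the three terms above by a constant multiple of $\mathfrak{g}_{\log}(\|\bm{w}^k-\bm{v}^k\|)=-1/\log(\tau^k/\|\bm{n}\|)$, which contradicts \eqref{eq:for-contradiction}. The terms $C_P^2\tau^k$ and $(\bm{v}_y^k)^2$ are routine using $t^{1/2}\le\mathfrak{g}_{\log}(t)$ near $0$, the estimate $\bm{v}_y^k\le M_0/|\log\tau^k|$, and the fact that $|\log\tau^k|$ and $|\log(\tau^k/\|\bm{n}\|)|$ differ by a vanishing ratio. The term $(\bm{v}_x^k)_+$ is the main obstacle: the naive bound $(\bm{v}_x^k)_+\le d\bm{v}_y^k|\log\bm{v}_y^k|$ combined with $\bm{v}_y^k\le M_0/|\log\tau^k|$ overshoots $\mathfrak{g}_{\log}$ by a spurious $\log|\log\tau^k|$ factor. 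To circumvent this I pass to a further subsequence according to the dichotomy $\bm{v}_y^k\ge(\tau^k)^{\mathsf{r}/d}$ or $\bm{v}_y^k<(\tau^k)^{\mathsf{r}/d}$. In the first case, $\log\det(\bm{v}_Z^k)\le d\log C+\mathsf{r}\log\tau^k$ forces $\bm{v}_x^k\le d\bm{v}_y^k\log C$, so $(\bm{v}_x^k)_+\le d(\log C)_+\,M_0/|\log\tau^k|$. In the second case, $\det(\bm{v}_Z^k)\to0$ gives $\bm{v}_x^k\le-d\bm{v}_y^k\log\bm{v}_y^k$ for large $k$, and monotonicity of $t\mapsto-t\log t$ near $0$ yields $-d\bm{v}_y^k\log\bm{v}_y^k\le\mathsf{r}(\tau^k)^{\mathsf{r}/d}|\log\tau^k|$; applying the fourth inequality in \eqref{eq:relationships-t-a-tlogt-1/logt} with exponent $\mathsf{r}/d$ and $s=1/\|\bm{n}\|$ converts the right-hand side into $\mathsf{r}\,\mathfrak{g}_{\log}(\|\bm{w}^k-\bm{v}^k\|)$. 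In both cases $\|\bm{w}^k-\bm{u}^k\|\le M\,\mathfrak{g}_{\log}(\|\bm{w}^k-\bm{v}^k\|)$ for all large $k$, contradicting \eqref{eq:for-contradiction}. Therefore $\gamma_{\bm{n},\eta}\in(0,\infty]$, and \eqref{eq:log-type-error-bound-F-3} follows from \cite[Theorem~3.10]{LiLoPo20}.
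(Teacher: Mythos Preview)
Your proof is correct and follows the same overall architecture as the paper (contradiction via \cite[Lemma~3.12]{LiLoPo20}, the same outer case split into $\bm{v}^k\in\Fd\setminus\Fs$ versus $\bm{v}^k\in\Kldie$, Lemma~\ref{lemma:ny>=0-vky=0-Holderian} for the former, and Lemma~\ref{lemma:rel-det-tr-Z-rank} plus Proposition~\ref{prop:psd-error-bound} for the latter), but the internal case analysis in the $\Kldie$ branch is organised differently. The paper splits on the sign of $\bm{v}_x^k=\bm{v}_y^k\ldt(\bm{v}_Z^k/\bm{v}_y^k)$: when $\bm{v}_x^k\ge0$ it deduces $\bm{v}_y^k\le C(\tau^k)^{\mathsf{r}/d}$ from Lemma~\ref{lemma:rel-det-tr-Z-rank} and obtains a H\"olderian bound $\|\bm{w}^k-\bm{u}^k\|\le C_\#(\tau^k)^{\rho}$ that is already majorised by $\mathfrak{g}_{\log}$; when $\bm{v}_x^k<0$ it works with the exponential form \eqref{eq:exponential-form-log-det-cone} to derive $\limsup_k(-\bm{v}_y^k\log\tau^k)\le -\widehat{\bm{v}}_x/\mathsf{r}$ and then estimates the $\mathfrak{g}_{\log}$ ratio directly. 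You instead extract the single uniform estimate $\bm{v}_y^k\le M_0/|\log\tau^k|$ up front (from $|\bm{v}_x^k|\le\eta$ combined with Lemma~\ref{lemma:rel-det-tr-Z-rank}) and then dichotomise on $\bm{v}_y^k$ versus $(\tau^k)^{\mathsf{r}/d}$ to control $(\bm{v}_x^k)_+$. Your route avoids the exponential form manipulation and gives a slightly more unified treatment of the $\bm{v}_y^k$ term; the paper's split has the advantage that its $\bm{v}_x^k\ge0$ sub-case yields the visibly stronger H\"olderian estimate and mirrors the pattern used in the proofs of Theorems~\ref{thm:Holderian-error-bound-F-3-ny>0} and~\ref{thm:log-type-error-bound-F-inf}. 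The key analytic ingredients---Lemma~\ref{lemma:rel-det-tr-Z-rank}, Proposition~\ref{prop:psd-error-bound}, and the last inequality in~\eqref{eq:relationships-t-a-tlogt-1/logt}---are the same in both.
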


\begin{proof}
	If \( \gamma _{\bm{n}, \eta } = 0 \), in view of~\cite[Lemma 3.12]{LiLoPo20}, there exists \( \widehat{\bm{v}} \in \Fs \) and sequences \( \{\bm{v}^k\}, \{\bm{w}^k\}, \{\bm{u}^k\} \) being defined as those therein, with the cone being \( \Kld \) and the face being \( \Fs \), such that~\eqref{eq:for-contradiction} holds with \( \mathfrak{g} = \mathfrak{g}_{\log} \) as in~\eqref{eq:g-log}.
	As in the proof of Theorem~\ref{thm:Holderian-error-bound-F-3-ny>0}, the condition \( \{\bm{v}^k\} \subset \partial \Kld \cap B(\eta) \setminus \Fs \) means that we need to consider the following two cases:
	\begin{enumerate}[(i)]
		\item\label{item:F-3-ny=0-vk-not-F-d} \( \bm{v}^k \in \partial \Kld \cap B(\eta) \setminus \Fd \) infinitely often;
		\item\label{item:F-3-ny=0-vk-F-d} \( \bm{v}^k \in \Fd \cap B(\eta) \setminus \Fs \) for all large \( k \).
	\end{enumerate}

	(\ref{item:F-3-ny=0-vk-not-F-d}) Passing to a subsequence if necessary, we can assume that \( \bm{v}^k \in \partial \Kld \cap B(\eta) \setminus \Fd \) for all \( k \), that is,
	\[ \bm{v}^k = (\bm{v}_y^k \ldt (\bm{v}_Z^k / \bm{v}_y^k ), \bm{v}_y^k, \bm{v}_Z^k) \text{ with }  \bm{v}_y^k > 0, \bm{v}_Z^k \succ 0, \quad \text{for all }k. \]
	Then \( \langle \bm{n}, \bm{v}^k \rangle = \tr(\bm{v}_Z^k \bm{n}_Z) \), which is nonnegative since \( \bm{n}_Z \succeq 0, \bm{v}_Z^k \succ 0 \).

	Now, one can check that for all \( k \),
	\begin{equation} \label{eqn:toptop}
		\| \bm{w}^k - \bm{v}^k \| = \frac{\langle \bm{n}, \bm{v}^k \rangle }{\| \bm{n} \| } = \frac{\tr(\bm{v}_Z^k \bm{n}_Z)}{\| \bm{n} \| }.
	\end{equation}
	On the other hand, by Lemma~\ref{lemma:norm-wk-uk}, the formula of \( \Fs \) and Proposition~\ref{prop:psd-error-bound}, we  obtain that for all \( k \),
	\begin{equation}\label{eq:norm-wk-uk-log-type-F-3}
		\| \bm{w}^k - \bm{u}^k \| \leq \dist(\bm{v}^k, \Fs) \leq ( \bm{v}_y^k \ldt (\bm{v}_Z^k / \bm{v}_y^k) )_+ + \bm{v}_y^k + C_P \tr(\bm{v}_Z^k \bm{n}_Z)^{\frac{1}{2}}.
	\end{equation}

	Let \( \tau ^k := \tr(\bm{v}_Z^k \bm{n}_Z) \) and \( \mathsf{r} := \rank(\bm{n}_Z) \).

	If \( \bm{v}_y^k \ldt (\bm{v}_Z^k / \bm{v}_y^k) \geq 0 \) infinitely often, then,
	by passing to a subsequence if necessary, we may assume that \( \det (\bm{v}_Z^k / \bm{v}_y^k) \geq 1 \) for all \( k \), and hence \( (\bm{v}_y^k)^d \leq \det(\bm{v}_Z^k) \) for all \( k \).
	Thus, upon invoking Lemma~\ref{lemma:rel-det-tr-Z-rank}, we obtain that for all \( k \),
	\begin{equation}\label{eq:rel-vky-det-ldt>0}
		\bm{v}_y^k \leq (\det(\bm{v}_Z^k))^{\frac{1}{d}} \leq C(\tau^k)^{\frac{\mathsf{r}}{d}}.
	\end{equation}
	Then, for all sufficiently large \( k \),
	\begin{align*}
		\| \bm{w}^k - \bm{u}^k \| & \overset{(\text{a})}{\leq } d\bm{v}_y^k |\log(\eta)| - d\bm{v}_y^k \log (\bm{v}_y^k) + \bm{v}_y^k + C_P( \tau^k)^{\frac{1}{2}}                                                           \\
		                          & \overset{(\text{b})}{\leq } (d|\log(\eta)| + 1)C(\tau^k)^{\frac{\mathsf{r}}{d}} - dC(\tau^k)^{\frac{\mathsf{r}}{d}} \log (C(\tau^k)^{\frac{\mathsf{r}}{d}}) + C_P (\tau^k)^{\frac{1}{2}} \\
		                          & = (d|\log(\eta)| + 1)C(\tau^k)^{\frac{\mathsf{r}}{d}} - Cd\log(C)(\tau^k)^{\frac{\mathsf{r}}{d}} - C\mathsf{r}(\tau^k)^{\frac{\mathsf{r}}{d}} \log (\tau^k) + C_P (\tau^k)^{\frac{1}{2}}  \\
		                          & \overset{(\text{c})}{\leq } (Cd|\log(\eta)| + C - Cd\log(C))(\tau^k)^{\frac{\mathsf{r}}{d}} + C\mathsf{r} (\tau^k)^{\frac{r}{2d}} + C_P(\tau^k)^{\frac{1}{2}}                            \\
		                          & \leq \Big|Cd|\log(\eta)| + C - Cd\log(C)\Big|(\tau^k)^{\rho } + C\mathsf{r}(\tau^k)^{\rho } + C_P(\tau^k)^{\rho }                                                                                \\
		                          & = C_{\#}(\tau^k)^{\rho },
	\end{align*}
	where \( \rho = \min \{\frac{\mathsf{r}}{2d}, \frac{1}{2}\} \) and \( C_{\#} := \Big|Cd|\log(\eta)|  + C - Cd\log(C)\Big| + C\mathsf{r} + C_P > 0 \), (a) comes from~\eqref{eq:norm-wk-uk-log-type-F-3} and~\eqref{eq:vky-ldt-vkz/vky-upper-bound-by-eta}, (b) holds because of \eqref{eq:rel-vky-det-ldt>0} and  the fact that \( x \mapsto -x\log (x) \) is increasing for all sufficiently small positive \( x \), (c) is true by~\eqref{eq:relationships-t-a-tlogt-1/logt} (with \( \alpha = \mathsf{r}/d>0 \)).

	Therefore, we conclude that
	\[ \lim_{k \to \infty} \frac{\mathfrak{g}_{\log}(\| \bm{w}^k - \bm{v}^k \| )}{\| \bm{w}^k - \bm{u}^k \| } \geq \liminf_{k \to \infty} \frac{\| \bm{w}^k - \bm{v}^k \| ^{\rho }}{\| \bm{w}^k - \bm{u}^k \| } \geq \lim_{k \to \infty} \frac{(\tau^k)^{\rho }}{\| \bm{n} \| ^{\rho } C_{\#} (\tau^k)^{\rho }} = \frac{1}{\| \bm{n} \| ^{\rho } C_{\#}} > 0. \]
	This contradicts~\eqref{eq:for-contradiction} with \( \mathfrak{g}_{\log} \) in place of \( \mathfrak{g} \) and hence this case cannot happen.

	If \( \bm{v}_y^k \ldt ( \bm{v}_Z^k  / \bm{v}_y^k  ) < 0 \) infinitely often, then by passing to a subsequence if necessary, we may assume that \( \bm{v}_y^k \ldt ( \bm{v}_Z^k  / \bm{v}_y^k  ) < 0 \) for all large \( k \).
  Moreover, recalling the exponential form of \( \Kld \) in~\eqref{eq:exponential-form-log-det-cone}, we have \( ( \bm{v}_y^k  )^d e^{\bm{v}_x^k / \bm{v}_y^k } = \det( \bm{v}_Z^k  ) \) for all \( k \).
	Invoking Lemma~\ref{lemma:rel-det-tr-Z-rank}, we then see that for all \( k \),
	\[ \bm{v}_y^k e^{\bm{v}_x^k / (d\bm{v}_y^k) } = (\det(\bm{v}_Z^k))^{\frac{1}{d}} \leq C ( \tau ^k )^{\frac{\mathsf{r}}{d}}. \]
	Thus, by taking logarithm on both sides, the above inequality becomes
	\[ \log (\bm{v}_y^k) + \frac{\bm{v}_x^k}{d \bm{v}_y^k} \leq  \log (C) + \frac{\mathsf{r}}{d} \log (\tau^k). \]
	Since \( \bm{v}_y^k \to 0\), \( \tau ^k \to 0 \), and both sequences are positive, we note that \( -\bm{v}_y^k \log (\tau^k) > 0 \) for all large \( k \).
	After multiplying \( -\bm{v}_y^k  \) on both sides of the above display and rearranging terms, we  see that for all large \( k \),
	\[ 0 < -\bm{v}_y^k \log (\tau^k) \leq \frac{d\log (C)\bm{v}_y^k }{\mathsf{r}} - \frac{d \bm{v}_y^k \log (\bm{v}_y^k)}{\mathsf{r}} - \frac{\bm{v}_x^k}{\mathsf{r}}. \]
	Then, by passing to the limit on both sides of the above display, we obtain that
  \begin{align}
    0 & \leq \limsup_{k \to \infty} - \bm{v}_y^k \log (\tau^k) \leq \limsup_{k \to \infty}\frac{d\log (C)\bm{v}_y^k }{\mathsf{r}} - \frac{d \bm{v}_y^k \log (\bm{v}_y^k)}{\mathsf{r}} - \frac{\bm{v}_x^k}{\mathsf{r}} \notag \\
    & = -\lim_{k \to \infty} \frac{\bm{v}_x^k}{\mathsf{r}} = -\frac{\widehat{\bm{v}}_x}{\mathsf{r}}. \label{eq:lim-vky-log(tauk)}
  \end{align}
	Therefore, we  conclude that
  \begin{align*}
			 & \quad \lim_{k \to \infty} \frac{\mathfrak{g}_{\log}(\| \bm{w}^k - \bm{v}^k \| )}{\| \bm{w}^k - \bm{u}^k \| } \overset{(\text{a})}{\geq} \liminf_{k \to \infty} -\frac{1}{\log (\tau^k) - \log (\| \bm{n} \| )}\frac{1}{\bm{v}_y^k + C_P(\tau^k)^{\frac{1}{2}}}       \\
			 & = \liminf_{k \to \infty} \frac{1}{\log (\| \bm{n} \| )(\bm{v}_y^k + C_P(\tau^k)^{\frac{1}{2}}) - \bm{v}_y^k \log (\tau^k) - C_P(\tau^k)^{\frac{1}{2}}\log (\tau^k)} \\
      & \overset{(\text{b})}{\geq } \lim_{k \to \infty} \frac{-\mathsf{r}}{\bm{v}_x^k } \in (0,\infty],
  \end{align*}
	where (a) is true owing to~\eqref{eqn:toptop} and~\eqref{eq:norm-wk-uk-log-type-F-3}, (b) comes from~\eqref{eq:lim-vky-log(tauk)} and the fact $\bm{v}_y^k \to 0, {\tau^k} \to 0$,  
	the last inequality holds because \( \widehat{\bm{v}}_x \leq 0 \) thanks to \( \bm{v}_y^k \ldt ( \bm{v}_Z^k  / \bm{v}_y^k  ) < 0 \) for all large \( k \).
	The above display contradicts~\eqref{eq:for-contradiction} with \( \mathfrak{g}_{\log} \) in place of \( \mathfrak{g} \) and so this case cannot happen.

	(\ref{item:F-3-ny=0-vk-F-d}) In this case, we have from Lemma~\ref{lemma:ny>=0-vky=0-Holderian} that
	$
	 \liminf_{k \to \infty} \frac{\| \bm{w}^k - \bm{v}^k \| ^{1/2 }}{\| \bm{w}^k - \bm{u}^k \| } \in (0,\infty]
	$, which implies
	 that
	 \[
	 \lim_{k \to \infty} \frac{\mathfrak{g}_{\log}(\| \bm{w}^k - \bm{v}^k \|)}{\| \bm{w}^k - \bm{u}^k \| } \geq \liminf_{k \to \infty} \frac{\| \bm{w}^k - \bm{v}^k \|^{1/2}}{\| \bm{w}^k - \bm{u}^k \| } \in (0,\infty],
	 \]
	 where we recall that $|t|^{1/2} \leq \mathfrak{g}_{\log}(t)$ for $t$ sufficiently small. In view of the definition of
	 $\gamma_{\bm{n},\eta} $, 	case (\ref{item:F-3-ny=0-vk-F-d}) also cannot happen.
	
	Hence, we  conclude that \( \gamma _{\bm{n}, \eta } \in (0,\infty] \). Using this together with \cite[Theorem 3.10]{LiLoPo20}, we deduce that~\eqref{eq:log-type-error-bound-F-3} holds.
\end{proof}

\begin{remark}[Tightness of~\eqref{eq:log-type-error-bound-F-3}]\label{remark:tightness-log-type-F-3}
	Let \( \bm{n} = (0, 0, \bm{n}_Z) \) with \( \bm{n}_Z \succeq 0\), \(0 < \rank(\bm{n}_Z) < d \).
	Then, we have \( \Fs = \{\bm{n}\}^{\perp} \cap \Kld \) from~\eqref{eq:n-dim-faces-F-n}.
	Consider the sequence \( \bm{w}^k = (-1, 1 / k, \bm{0}) \), \( \bm{v}^k = (-1, 1 / k, I_d / (ke^{\frac{k}{d}})) \) and \( \bm{u}^k = (-1, 0, \bm{0}) \) for every \( k \), we note that \( \bm{w}^k \in \{\bm{n}\}^{\perp}, \bm{v}^k \in \Kld \) and \( \bm{u}^k = P_{\Fs}(\bm{w}^k) \) for every \( k \).
	Moreover, there exists \( \eta > 0 \) such that \( \{\bm{w}^k\} \subseteq B(\eta) \).
	Therefore, applying~\eqref{eq:log-type-error-bound-F-3}, there exists \( \kappa _B > 0 \) such that
	\[ \frac{1}{k} = \dist(\bm{w}^k, \Fs) \leq \kappa _B \mathfrak{g}_{\log} (\dist(\bm{w}^k, \Kld)) \leq \kappa _B \mathfrak{g}_{\log} \left( \frac{\sqrt{d}}{ke^{\frac{k}{d}}} \right) \quad \forall k \in \mathbb{N}.  \]
	In view of the definition of \( \mathfrak{g}_{\log} \) (see~\eqref{eq:g-log}) and its monotonicity, for large enough \( k \) we have
	\[ \frac{1}{k} = \dist(\bm{w}^k, \Fs) \leq \kappa _B \mathfrak{g}_{\log} (\dist(\bm{w}^k, \Kld)) \le \frac{\kappa_B}{\log k + (k/d) - \log\sqrt{d}} \leq \kappa_B\frac{2d}{k}. \]
	Consequently, it holds that for all sufficiently large \( k \),
	\[ \frac{1}{2d} \leq \frac{\dist(\bm{w}^k, \Fs)}{\mathfrak{g}_{\log} (\dist(\bm{w}^k, \Kld))} \leq \kappa _B. \]
	Similar to the argument in Remark~\ref{remark:tightness-F-d}, we conclude that the choice of \( \mathfrak{g}_{\log}  \) is tight.
\end{remark}

Using Theorems~\ref{thm:Holderian-error-bound-F-3-ny>0} and~\ref{thm:log-type-error-bound-F-3} in combination with
\cite[Lemma~3.9]{LiLoPo20}, we obtain the following one-step facial residual functions for \( \Kld \) and \( \bm{n} \).

\begin{corollary}\label{corollary:FRFs-F-3}
	Let \( \bm{n} = (0, \bm{n}_y, \bm{n}_Z) \in \partial \Kld^{*} \) with \( \bm{n}_y \geq 0\), \(\bm{n}_Z \succeq 0 \) and \( 0 < \rank(\bm{n}_Z) < d \) such that \( \Fs = \Kld \cap \{\bm{n}\}^{\perp }  \).
	\begin{enumerate}[(i)]
		\item If \( \bm{n}_y > 0 \), let \( \gamma _{\bm{n}, t} \) be as in~\eqref{eq:def-gamma} with $\stdFace = \Fs$ and \( \mathfrak{g} = | \cdot  | ^{\frac{1}{2}} \).
		      Then the function \( \psi _{\mathcal{K}, \bm{n}} : \R_{+} \times \R_{+} \to \R_{+} \) defined by
		      \[ \psi _{\mathcal{K}, \bm{n}} (\epsilon, t) := \max \left\{\epsilon, \epsilon / \| \bm{n} \| \right\} + \max \left\{2t ^{\frac{1}{2}}, 2\gamma _{\bm{n}, t}^{-1}\right\}\left(\epsilon  + \max\left\{\epsilon , \epsilon / \| \bm{n} \| \right\}\right)^{\frac{1}{2}} \]
		      is a one-step facial residual function for \( \Kld  \) and \( \bm{n} \).
		\item If \( \bm{n}_y = 0 \), let \( \gamma _{\bm{n}, t } \) be as in~\eqref{eq:def-gamma} with $\stdFace = \Fs$ and \( \mathfrak{g} = \mathfrak{g}_{\log} \) in~\eqref{eq:g-log}.
		      Then the function \( \psi _{\mathcal{K}, \bm{n}} : \R_{+} \times \R_{+} \to \R_{+} \) defined by
		      \[ \psi _{\mathcal{K}, \bm{n}} (\epsilon, t) := \max \left\{\epsilon, \epsilon / \| \bm{n} \| \right\} + \max \left\{2, 2\gamma _{\bm{n}, t}^{-1}\right\} \mathfrak{g}_{\log}\left(\epsilon  + \max\left\{\epsilon , \epsilon / \| \bm{n} \| \right\}\right) \]
		      is a one-step facial residual function for \( \Kld  \) and \( \bm{n} \).
	\end{enumerate}
\end{corollary}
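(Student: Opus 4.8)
The plan is to obtain both one-step facial residual functions by feeding the error bounds of Theorem~\ref{thm:Holderian-error-bound-F-3-ny>0} and Theorem~\ref{thm:log-type-error-bound-F-3} into \cite[Lemma~3.9]{LiLoPo20}, exactly as was done for $\Fd$ in Corollary~\ref{corollary:FRFs-F-d}. So both items reduce to checking the hypotheses of \cite[Lemma~3.9]{LiLoPo20} for the relevant $\mathfrak{g}$ and then reading off the stated $\psi_{\mathcal{K},\bm{n}}$ from its conclusion.

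First I would recall the mechanism behind \cite[Lemma~3.9]{LiLoPo20}, which also explains the shape of $\psi_{\mathcal{K},\bm{n}}$. Given $\bm{x}\in\Span\Kld$ with $\dist(\bm{x},\Kld)\le\epsilon$ and $\langle\bm{x},\bm{n}\rangle\le\epsilon$, set $\bm{w}:=P_{\{\bm{n}\}^{\perp}}(\bm{x})$. Since $\bm{n}\in\Kld^{*}$ and $P_{\Kld}(\bm{x})\in\Kld$, we have $\langle P_{\Kld}(\bm{x}),\bm{n}\rangle\ge0$, hence by Cauchy--Schwarz $\langle\bm{x},\bm{n}\rangle\ge-\|\bm{n}\|\dist(\bm{x},\Kld)\ge-\|\bm{n}\|\epsilon$; together with $\langle\bm{x},\bm{n}\rangle\le\epsilon$ this gives $\|\bm{x}-\bm{w}\|=|\langle\bm{x},\bm{n}\rangle|/\|\bm{n}\|\le\max\{\epsilon,\epsilon/\|\bm{n}\|\}$. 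Consequently $\dist(\bm{w},\Kld)\le\epsilon+\max\{\epsilon,\epsilon/\|\bm{n}\|\}$ and $\|\bm{w}\|\le\|\bm{x}\|$, so applying the hyperplane error bound (with radius $\eta=\|\bm{x}\|$) and then the triangle inequality $\dist(\bm{x},\Fs)\le\|\bm{x}-\bm{w}\|+\dist(\bm{w},\Fs)$ yields precisely the displayed $\psi_{\mathcal{K},\bm{n}}$, with the constant of the hyperplane error bound reappearing with $\eta$ replaced by the second argument $t=\|\bm{x}\|$.

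It then remains to verify the side conditions. For (i), $\mathfrak{g}=|\cdot|^{1/2}$ is nonnegative, nondecreasing, vanishes at $0$, and satisfies $\mathfrak{g}\ge|\cdot|^{1/2}$; Theorem~\ref{thm:Holderian-error-bound-F-3-ny>0} supplies, for every $\eta>0$, the bound $\dist(\bm{q},\Fs)\le\max\{2\eta^{1/2},2\gamma_{\bm{n},\eta}^{-1}\}\dist(\bm{q},\Kld)^{1/2}$ on $\{\bm{n}\}^{\perp}\cap B(\eta)$, and the constant $\max\{2\eta^{1/2},2\gamma_{\bm{n},\eta}^{-1}\}$ is nondecreasing in $\eta$: $2\eta^{1/2}$ obviously, and $\gamma_{\bm{n},\eta}^{-1}$ because the infimum in \eqref{eq:def-gamma} is taken over $\partial\Kld\cap B(\eta)\setminus\Fs$, which grows with $\eta$, so $\gamma_{\bm{n},\eta}$ is nonincreasing (with the convention $\infty^{-1}=0$). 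For (ii), $\mathfrak{g}=\mathfrak{g}_{\log}$ is nonnegative, increasing, vanishes at $0$, and satisfies $|t|\le\mathfrak{g}_{\log}(t)$ so $\mathfrak{g}_{\log}\ge|\cdot|^{1}$; Theorem~\ref{thm:log-type-error-bound-F-3} gives the bound with constant $\max\{2,2\gamma_{\bm{n},\eta}^{-1}\}$, again nondecreasing in $\eta$ by the same monotonicity. Feeding these into \cite[Lemma~3.9]{LiLoPo20} produces the asserted $\psi_{\mathcal{K},\bm{n}}$, and one checks directly that each meets Definition~\ref{def:one-step-facial-residual-functions}(i): nonnegativity and monotonicity in each argument are inherited, and $\psi_{\mathcal{K},\bm{n}}(0,t)=0$ because both $\max\{\epsilon,\epsilon/\|\bm{n}\|\}$ and the $\mathfrak{g}$-term vanish at $\epsilon=0$.

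Since all the analytic difficulty was already discharged in the two preceding theorems, there is no real obstacle here; the one point needing a little care is confirming that the radius $\eta$ in the error-bound constants can legitimately be promoted to the monotone second argument $t$ of the facial residual function, i.e., that $t\mapsto\gamma_{\bm{n},t}^{-1}$ (and $t\mapsto 2t^{1/2}$ in case (i)) is nondecreasing, and that the resulting $\psi_{\mathcal{K},\bm{n}}$ still satisfies all requirements of Definition~\ref{def:one-step-facial-residual-functions}.
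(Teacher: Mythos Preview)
Your proposal is correct and follows essentially the same approach as the paper, which simply invokes Theorems~\ref{thm:Holderian-error-bound-F-3-ny>0} and~\ref{thm:log-type-error-bound-F-3} together with \cite[Lemma~3.9]{LiLoPo20} without spelling out the details. Your write-up is in fact more thorough than the paper's one-line justification, correctly unpacking the mechanism of \cite[Lemma~3.9]{LiLoPo20} and verifying the monotonicity of $t\mapsto\gamma_{\bm{n},t}^{-1}$ needed to replace the fixed radius $\eta$ by the running argument $t$.
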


\subsubsection{\( \Ff  \): the exceptional 1-dimensional face}\label{subsubsec:FRFs-F-inf}
We first show a Lipschitz error bound concerning \( \Ff  \) if \( \bm{n}_y > 0 \).
\begin{theorem}[Lipschitz error bound concerning \( \Ff  \) if \( \bm{n}_y > 0 \)]\label{thm:Lipschitz-error-bound-F-inf-n-y>0}
	Let \( \bm{n} = (0, \bm{n}_y, \bm{n}_Z) \in \partial\Kld^{*}  \) with \( \bm{n}_y > 0\) and \(\bm{n}_Z \succ 0 \) such that \( \Ff = \Kld \cap \{\bm{n}\}^{\perp }  \).
	Let \( \eta > 0 \) and let \( \gamma _{\bm{n}, \eta } \) be defined as in~\eqref{eq:def-gamma} with $\stdFace = \Ff$ and \( \mathfrak{g}= | \cdot  | \).
	Then \( \gamma _{\bm{n}, \eta } \in (0, \infty] \) and
	\begin{equation}\label{eq:Lipschitz-error-bound-F-inf-n-y>0}
		\dist(\bm{q}, \Ff) \leq \max \{2, 2\gamma _{\bm{n}, \eta }^{-1}\}\cdot \dist(\bm{q}, \Kld) \quad\quad \forall \bm{q} \in \{\bm{n}\}^{\perp } \cap B(\eta).
	\end{equation}
\end{theorem}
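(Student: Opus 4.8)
The plan is to follow the contradiction strategy already used in the proofs of Theorems~\ref{thm:entropic-error-bound-F-d}, \ref{thm:Holderian-error-bound-F-3-ny>0} and~\ref{thm:log-type-error-bound-F-3}. Assume $\gamma_{\bm{n},\eta}=0$; then by \cite[Lemma~3.12]{LiLoPo20} there exist $\widehat{\bm{v}}\in\Ff$ and a sequence $\{\bm{v}^k\}\subset\partial\Kld\cap B(\eta)\setminus\Ff$ satisfying \eqref{eq:for-contradiction} with $\mathfrak{g}=|\cdot|$ and $\stdFace=\Ff$. Since $\Ff=\R_-\times\{0\}\times\{\bm{0}\}$ by \eqref{eq:1-dim-face-F-inf}, we have $\widehat{\bm{v}}=(\widehat{\bm{v}}_x,0,\bm{0})$ with $\widehat{\bm{v}}_x\le 0$, and using $\partial\Kld=\Kldie\cup\Kldii$ (see \eqref{eq:kldboundary}) I split into two cases: (i) $\bm{v}^k\in\partial\Kld\cap B(\eta)\setminus\Fd$ infinitely often; (ii) $\bm{v}^k\in\Fd\cap B(\eta)\setminus\Ff$ for all large $k$. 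Case (ii) is immediate from Lemma~\ref{lemma:ny>=0-vky=0-Holderian} applied with $\mathcal{F}=\Ff$: since $\bm{n}_Z\succ 0$ has rank $d$, the exponent in \eqref{eq:alpha-psd-cone-error-bound} is $\alpha=1$, so $\liminf_k\|\bm{w}^k-\bm{v}^k\|/\|\bm{w}^k-\bm{u}^k\|\in(0,\infty]$, contradicting \eqref{eq:for-contradiction}.

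For case (i), after passing to a subsequence I may write $\bm{v}^k=(\bm{v}_y^k\ldt(\bm{v}_Z^k/\bm{v}_y^k),\bm{v}_y^k,\bm{v}_Z^k)$ with $\bm{v}_y^k>0$, $\bm{v}_Z^k\succ 0$; since $\bm{n}=(0,\bm{n}_y,\bm{n}_Z)$ with $\bm{n}_y>0$, $\bm{n}_Z\succ 0$, one gets $\langle\bm{n},\bm{v}^k\rangle=\bm{n}_y\bm{v}_y^k+\tau^k>0$ with $\tau^k:=\tr(\bm{v}_Z^k\bm{n}_Z)$, and hence $\|\bm{w}^k-\bm{v}^k\|=(\bm{n}_y\bm{v}_y^k+\tau^k)/\|\bm{n}\|$. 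By Lemma~\ref{lemma:norm-wk-uk} and the explicit form of $\Ff$, $\|\bm{w}^k-\bm{u}^k\|\le\dist(\bm{v}^k,\Ff)\le(\bm{v}_x^k)_+ +\bm{v}_y^k+\|\bm{v}_Z^k\|_F$. The first and third summands are estimated linearly in $\|\bm{w}^k-\bm{v}^k\|$ using the displayed formula: $\bm{v}_y^k\le(\|\bm{n}\|/\bm{n}_y)\|\bm{w}^k-\bm{v}^k\|$, and by \eqref{eqn:traceproperties} (together with $\tr(A)\ge\|A\|_F$ for $A\succeq 0$) $\|\bm{v}_Z^k\|_F\le\tr(\bm{v}_Z^k)\le\tau^k/\lambda_{\min}(\bm{n}_Z)\le(\|\bm{n}\|/\lambda_{\min}(\bm{n}_Z))\|\bm{w}^k-\bm{v}^k\|$.

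The main obstacle is the term $(\bm{v}_x^k)_+$: a naive use of \eqref{eq:vky-ldt-vkz/vky-upper-bound-by-eta} only bounds it by something of order $-\bm{v}_y^k\log\bm{v}_y^k$, which is too weak for a Lipschitzian estimate. Instead I use the exponential description \eqref{eq:exponential-form-log-det-cone}, namely $\bm{v}_y^k e^{\bm{v}_x^k/(d\bm{v}_y^k)}=(\det\bm{v}_Z^k)^{1/d}$, together with Lemma~\ref{lemma:rel-det-tr-Z-rank} in the full-rank case $\rank(\bm{n}_Z)=d$, which gives $(\det\bm{v}_Z^k)^{1/d}\le C\tau^k$ (legitimate since $\bm{v}_Z^k\in B(\eta)\cap\psdc$). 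When $\bm{v}_x^k\ge 0$ this yields $\bm{v}_y^k\le C\tau^k$ and $\bm{v}_x^k=d\bm{v}_y^k\log\!\big((\det\bm{v}_Z^k)^{1/d}/\bm{v}_y^k\big)\le d\bm{v}_y^k\log(C\tau^k/\bm{v}_y^k)$; writing $s=\bm{v}_y^k/(C\tau^k)\in(0,1]$ and using the elementary bound $s\log(1/s)\le 1/e$ gives $(\bm{v}_x^k)_+\le(dC/e)\tau^k\le(dC\|\bm{n}\|/e)\|\bm{w}^k-\bm{v}^k\|$ (and $(\bm{v}_x^k)_+=0$ when $\bm{v}_x^k<0$). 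Summing the three estimates produces $\|\bm{w}^k-\bm{u}^k\|\le M\|\bm{w}^k-\bm{v}^k\|$ for a constant $M$ depending only on $\bm{n}$, $\eta$, $d$, contradicting \eqref{eq:for-contradiction}. Therefore $\gamma_{\bm{n},\eta}\in(0,\infty]$, and \eqref{eq:Lipschitz-error-bound-F-inf-n-y>0} follows from \cite[Theorem~3.10]{LiLoPo20}.
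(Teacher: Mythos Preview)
Your proof is correct and follows the same contradiction architecture as the paper: the same case split into $\bm{v}^k\notin\Fd$ versus $\bm{v}^k\in\Fd\setminus\Ff$, the same formulas for $\|\bm{w}^k-\bm{v}^k\|$ and the same upper bound $\|\bm{w}^k-\bm{u}^k\|\le(\bm{v}_x^k)_+ +\bm{v}_y^k+\|\bm{v}_Z^k\|_F$, and the same appeal to Lemma~\ref{lemma:ny>=0-vky=0-Holderian} (with $\alpha=1$) for case~(ii).

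The one genuine difference is in how you control $(\bm{v}_x^k)_+$ linearly by $\tau^k$. The paper works eigenvalue by eigenvalue, using $\log x\le x+1$ to get $\bm{v}_y^k\ldt(\bm{v}_Z^k/\bm{v}_y^k)\le\tr(\bm{v}_Z^k)+d\bm{v}_y^k$, and only then invokes $\tr(\bm{v}_Z^k)\le\tau^k/\lambda_{\min}(\bm{n}_Z)$; this is entirely elementary and does not use Lemma~\ref{lemma:rel-det-tr-Z-rank}. You instead route through Lemma~\ref{lemma:rel-det-tr-Z-rank} in the full-rank case to obtain $(\det\bm{v}_Z^k)^{1/d}\le C\tau^k$, and then use the scalar bound $s\log(1/s)\le 1/e$ on $s=\bm{v}_y^k/(C\tau^k)$. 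Both arguments deliver a linear bound $(\bm{v}_x^k)_+\le{\rm const}\cdot\tau^k$; the paper's is slightly more self-contained, while yours reuses machinery already in place for the $\Fs$ and $\Ff$ log-type cases, which is a nice economy.
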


\begin{proof}
	If \( \gamma _{\bm{n}, \eta } = 0 \), in view of~\cite[Lemma 3.12]{LiLoPo20}, there exists \( \widehat{\bm{v}} \in \Ff \) and sequences \( \{\bm{v}^k\}, \{\bm{w}^k\}, \{\bm{u}^k\} \) being defined as those therein, with the cone being \( \Kld \) and the face being \( \Ff \), such that~\eqref{eq:for-contradiction} holds with \( \mathfrak{g} = | \cdot | \).
	Note that \( \{\bm{v}^k\} \subset \partial \Kld \cap B(\eta) \setminus \Ff \) means that we need to consider the following two cases:
	\begin{enumerate}[(i)]
		\item\label{item:F-inf-ny>0-vk-not-F-d} \( \bm{v}^k \in \partial \Kld \cap B(\eta) \setminus \Fd \) infinitely often;
		\item\label{item:F-inf-ny>0-vk-F-d} \( \bm{v}^k \in \Fd \cap B(\eta) \setminus \Ff \) for all large \( k \).
	\end{enumerate}

	(\ref{item:F-inf-ny>0-vk-not-F-d}) Without loss of generality, we  assume that \( \bm{v}^k \in \partial \Kld \cap B(\eta) \setminus \Fd \) for all \( k \) by passing to a subsequence if necessary, that is,
	\[ \bm{v}^k = (\bm{v}_y^k \ldt (\bm{v}_Z^k / \bm{v}_y^k), \bm{v}_y^k , \bm{v}_Z^k ) \text{ with } \bm{v}_y^k > 0, \bm{v}_Z^k \succ 0 \quad \text{for all }k. \]
	Then, \( \langle \bm{n}, \bm{v}^k \rangle = \bm{n}_y \bm{v}_y^k + \tr(\bm{v}_Z^k \bm{n}_Z) > 0 \) and
	\[ \| \bm{w}^k - \bm{v}^k \| = \frac{\bm{n}_y \bm{v}_y^k + \tr(\bm{v}_Z^k \bm{n}_Z)}{\| \bm{n} \| }. \]
	On the other hand, by Lemma~\ref{lemma:norm-wk-uk}, we  obtain that for all \( k \),
	\begin{equation}
    \label{eq:F-inf-case-1}
    \| \bm{w}^k - \bm{u}^k \| \leq \dist(\bm{v}^k, \Ff) \leq (\bm{v}_y^k \ldt (\bm{v}_Z^k / \bm{v}_y^k))_{+} + \bm{v}_y^k + \| \bm{v}_Z^k  \| _F.
  \end{equation}
	If \( \bm{v}_y^k \ldt (\bm{v}_Z^k / \bm{v}_y^k) \geq 0 \) infinitely often, by passing to a subsequence if necessary, we may assume that \( \bm{v}_y^k \ldt (\bm{v}_Z^k / \bm{v}_y^k) \geq 0 \) for all large \( k \) and hence, recalling
	that $\| \bm{v}_Z^k  \| _F \leq \tr(\bm{v}_Z^k)$ (since $\bm{v}_Z^k \succ 0$), we obtain
  \begin{align*}
    \| \bm{w}^k - \bm{u}^k \| & \leq \bm{v}_y^k \ldt (\bm{v}_Z^k / \bm{v}_y^k) + \bm{v}_y^k + \tr(\bm{v}_Z^k) = \bm{v}_y^k +\tr(\bm{v}_Z^k) + \bm{v}_y^k \log (\prod_{i=1}^d \lambda _i(\bm{v}_Z^k) / \bm{v}_y^k  )                                                 \\
                            & = \bm{v}_y^k + \tr(\bm{v}_Z^k) + \sum_{i=1}^d \bm{v}_y^k \log (\lambda _i(\bm{v}_Z^k) / \bm{v}_y^k ) \\
                            & \overset{(\text{a})}{\leq } \bm{v}_y^k + \tr(\bm{v}_Z^k) + \sum_{i=1}^d \bm{v}_y^k (\lambda _i(\bm{v}_Z^k) / \bm{v}_y^k  +  1) \\
                              & = \bm{v}_y^k + \tr(\bm{v}_Z^k) + \tr(\bm{v}_Z^k) + d\bm{v}_y^k  = (1 + d)\bm{v}_y^k + 2 \tr(\bm{v}_Z^k),
  \end{align*}
	where (a) holds because \( \log (x) \leq x + 1 \) for all \( x > 0 \).

	Combining these identities and using \eqref{eqn:traceproperties} yields:
	\[
    \begin{split}
      & \lim_{k \to \infty} \frac{\| \bm{w}^k - \bm{v}^k \| }{\| \bm{w}^k - \bm{u}^k \| } \geq \liminf_{k \to \infty} \frac{\frac{\bm{n}_y }{\| \bm{n} \| (1 + d)} (1 + d) \bm{v}_y^k + \frac{\lambda _{\min }(\bm{n}_Z)}{2 \| \bm{n} \| } 2 \tr(\bm{v}_Z^k) }{(1 + d)\bm{v}_y^k + 2 \tr(\bm{v}_Z^k)} \\
      \geq & \min \left\{\frac{\bm{n}_y }{\| \bm{n} \| (1 + d)}, \frac{\lambda _{\min }(\bm{n}_Z)}{2 \| \bm{n} \| }  \right\} > 0.
    \end{split}
  \]
	This contradicts~\eqref{eq:for-contradiction} with \( | \cdot | \) in place of \( \mathfrak{g} \) and hence this case cannot happen.

	If \( \bm{v}_y^k \ldt (\bm{v}_Z^k / \bm{v}_y^k) < 0 \) infinitely often, then by extracting a subsequence if necessary, we may assume that \( \bm{v}_y^k \ldt (\bm{v}_Z^k / \bm{v}_y^k) < 0 \) for all large \( k \) and hence \eqref{eq:F-inf-case-1} becomes
	\[ \| \bm{w}^k - \bm{u}^k \| \leq \bm{v}_y^k +\tr(\bm{v}_Z^k). \]
	Therefore,
	\[ \lim_{k \to \infty} \frac{\| \bm{w}^k - \bm{v}^k \| }{\| \bm{w}^k - \bm{u}^k \| } \geq \liminf_{k \to \infty} \frac{\frac{\bm{n}_y }{\| \bm{n} \| } \bm{v}_y^k + \frac{\lambda _{\min }(\bm{n}_Z)}{\| \bm{n} \| } \tr(\bm{v}_Z^k) }{\bm{v}_y^k + \tr(\bm{v}_Z^k)} \geq \min \left\{\frac{\bm{n}_y }{\| \bm{n} \| }, \frac{\lambda _{\min }(\bm{n}_Z) }{\| \bm{n} \| } \right\} > 0.  \]
	The above inequality contradicts~\eqref{eq:for-contradiction} with \( | \cdot | \) in place of \( \mathfrak{g} \) and hence this case cannot happen.

	(\ref{item:F-inf-ny>0-vk-F-d}) By Lemma~\ref{lemma:ny>=0-vky=0-Holderian}, case (\ref{item:F-inf-ny>0-vk-F-d}) also cannot happen.

	Overall, we conclude that \( \gamma _{\bm{n}, \eta } \in (0,\infty] \), and so by ~\cite[Theorem 3.10]{LiLoPo20},~\eqref{eq:Lipschitz-error-bound-F-inf-n-y>0} holds.
\end{proof}

Note that a Lipschitz error bound is always tight up to a constant, so~\eqref{eq:Lipschitz-error-bound-F-inf-n-y>0} is tight.

If \( \bm{n}_y = 0 \), we have the following Log-type error bound for \( \Kld \).

\begin{theorem}[Log-type error bound concerning \( \Ff \) if \( \bm{n}_y = 0 \)]\label{thm:log-type-error-bound-F-inf}
	Let \( \bm{n} = (0, 0, \bm{n}_Z) \in \partial\Kld^{*}  \) with \( \bm{n}_Z \succ 0 \) such that \( \Ff = \Kld \cap \{\bm{n}\}^{\perp }   \).
	Let \( \eta > 0 \) and let \( \gamma _{\bm{n}, \eta } \) be defined as in~\eqref{eq:def-gamma} with $\stdFace = \Ff$ and \( \mathfrak{g} = \mathfrak{g}_{\log} \) in \eqref{eq:g-log}.
	Then \( \gamma _{\bm{n}, \eta } \in (0, \infty] \) and
	\begin{equation}\label{eq:log-type-error-bound-F-inf}
		\dist(\bm{q}, \Ff) \leq \max \{2, 2\gamma _{\bm{n}, \eta }^{-1}\} \cdot \mathfrak{g}_{\log}(\dist(\bm{q}, \Kld)) \quad\quad \forall \bm{q} \in \{\bm{n}\}^{\perp } \cap B(\eta).
	\end{equation}
\end{theorem}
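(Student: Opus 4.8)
The plan is to mirror the proof of Theorem~\ref{thm:log-type-error-bound-F-3}, specializing to the case $\rank(\bm{n}_Z)=d$, which promotes the semidefinite error-bound exponent in Proposition~\ref{prop:psd-error-bound} from $\tfrac12$ to $1$ and collapses $\psdc\cap\{\bm{n}_Z\}^\perp$ to $\{\bm{0}\}$ (the $Z$-component of $\Ff$). First I would assume, towards a contradiction, that $\gamma_{\bm{n},\eta}=0$; by~\cite[Lemma~3.12]{LiLoPo20} this produces $\widehat{\bm v}\in\Ff$ and sequences $\{\bm{v}^k\}\subset\partial\Kld\cap B(\eta)\setminus\Ff$, $\bm{w}^k=P_{\{\bm{n}\}^\perp}(\bm{v}^k)$, $\bm{u}^k=P_{\Ff}(\bm{w}^k)\neq\bm{w}^k$ satisfying~\eqref{eq:for-contradiction} with $\mathfrak{g}=\mathfrak{g}_{\log}$. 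Since $\partial\Kld=\Kldie\cup\Fd$ by~\eqref{eq:kldboundary} and $\Ff\properideal\Fd$, exactly as in Theorem~\ref{thm:log-type-error-bound-F-3} I would reduce to two cases: (i) $\bm{v}^k\in\partial\Kld\cap B(\eta)\setminus\Fd$ infinitely often, and (ii) $\bm{v}^k\in\Fd\cap B(\eta)\setminus\Ff$ for all large $k$.

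Case (ii) follows directly from Lemma~\ref{lemma:ny>=0-vky=0-Holderian} with $\mathcal{F}=\Ff$ (here $\alpha=1$ because $\bm{n}_Z\succ0$): it yields $\liminf_k\|\bm{w}^k-\bm{v}^k\|/\|\bm{w}^k-\bm{u}^k\|\in(0,\infty]$, and since $|t|\le\mathfrak{g}_{\log}(t)$ for every $t\ge0$ this contradicts~\eqref{eq:for-contradiction}. For case (i) I would pass to a subsequence so that $\bm{v}^k=(\bm{v}_y^k\ldt(\bm{v}_Z^k/\bm{v}_y^k),\bm{v}_y^k,\bm{v}_Z^k)$ with $\bm{v}_y^k>0$, $\bm{v}_Z^k\succ0$ for all $k$, and note $\bm{v}_y^k\to0$, $\bm{v}_Z^k\to\bm{0}$ since $\bm{v}^k\to\widehat{\bm v}\in\Ff$. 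Writing $\tau^k:=\tr(\bm{v}_Z^k\bm{n}_Z)>0$, one has $\langle\bm{n},\bm{v}^k\rangle=\tau^k$ and $\|\bm{w}^k-\bm{v}^k\|=\tau^k/\|\bm{n}\|\to0$; and by Lemma~\ref{lemma:norm-wk-uk}, the formula~\eqref{eq:1-dim-face-F-inf} for $\Ff$, and Proposition~\ref{prop:psd-error-bound} (noting $\psdc\cap\{\bm{n}_Z\}^\perp=\{\bm 0\}$ and $\alpha=1$), one gets $\|\bm{w}^k-\bm{u}^k\|\le\dist(\bm{v}^k,\Ff)\le(\bm{v}_y^k\ldt(\bm{v}_Z^k/\bm{v}_y^k))_+ + \bm{v}_y^k + C_P\tau^k$.

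I would then split case (i) as in Theorem~\ref{thm:log-type-error-bound-F-3}. If $\bm{v}_y^k\ldt(\bm{v}_Z^k/\bm{v}_y^k)\ge0$ infinitely often, then $(\bm{v}_y^k)^d\le\det(\bm{v}_Z^k)$, and Lemma~\ref{lemma:rel-det-tr-Z-rank} with $Z=\bm{n}_Z$ (of rank $d$) gives $\bm{v}_y^k\le(\det\bm{v}_Z^k)^{1/d}\le C\tau^k$; plugging this into~\eqref{eq:vky-ldt-vkz/vky-upper-bound-by-eta}, using that $x\mapsto-x\log x$ is increasing near $0$, and invoking~\eqref{eq:relationships-t-a-tlogt-1/logt} gives $\|\bm{w}^k-\bm{u}^k\|\le C_{\#}(\tau^k)^{1/2}$ for large $k$, so that $\mathfrak{g}_{\log}(\|\bm{w}^k-\bm{v}^k\|)/\|\bm{w}^k-\bm{u}^k\|\ge(\tau^k/\|\bm{n}\|)^{1/2}/(C_{\#}(\tau^k)^{1/2})$ is bounded away from $0$ (using $t^{1/2}\le\mathfrak{g}_{\log}(t)$ near $0$), contradicting~\eqref{eq:for-contradiction}. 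If instead $\bm{v}_y^k\ldt(\bm{v}_Z^k/\bm{v}_y^k)<0$ infinitely often, then $(\bm{v}_y^k\ldt(\bm{v}_Z^k/\bm{v}_y^k))_+=0$, so $\|\bm{w}^k-\bm{u}^k\|\le\bm{v}_y^k+C_P\tau^k$; from the exponential form~\eqref{eq:exponential-form-log-det-cone} one has $\bm{v}_y^k e^{\bm{v}_x^k/(d\bm{v}_y^k)}=(\det\bm{v}_Z^k)^{1/d}\le C\tau^k$, and taking logarithms, multiplying by $-\bm{v}_y^k$ and rearranging yields $0<-\bm{v}_y^k\log\tau^k\le\bm{v}_y^k\log C-\bm{v}_y^k\log\bm{v}_y^k-\bm{v}_x^k/d$, hence $\limsup_k(-\bm{v}_y^k\log\tau^k)\le-\widehat{\bm v}_x/d$. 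Since $\widehat{\bm v}_x\le0$, expanding $\mathfrak{g}_{\log}(\|\bm{w}^k-\bm{v}^k\|)/\|\bm{w}^k-\bm{u}^k\|\ge -1/[(\log\tau^k-\log\|\bm{n}\|)(\bm{v}_y^k+C_P\tau^k)]$ and passing to $\liminf$ shows the quotient lies in $(0,\infty]$, a contradiction. Thus $\gamma_{\bm{n},\eta}\in(0,\infty]$, and~\eqref{eq:log-type-error-bound-F-inf} then follows from~\cite[Theorem~3.10]{LiLoPo20}.

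The main obstacle, exactly as in Theorem~\ref{thm:log-type-error-bound-F-3}, is the last subcase: one must keep the cross term $-\bm{v}_y^k\log\tau^k$ in the denominator under control, which relies on the logarithmic inequality above together with $\widehat{\bm v}_x$ being finite and nonpositive (with the borderline situation $\widehat{\bm v}_x=0$ only driving the quotient to $+\infty$, which is still enough to contradict~\eqref{eq:for-contradiction}). Everything else is a faithful transcription of the $\Fs$ argument with $\rank(\bm{n}_Z)=d$.
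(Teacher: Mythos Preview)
Your proposal is correct and follows essentially the same route as the paper's proof: the same case split via~\eqref{eq:kldboundary}, the same use of Lemma~\ref{lemma:ny>=0-vky=0-Holderian} for case~(ii), and the same two subcases in case~(i) with the same key ingredients (Lemma~\ref{lemma:rel-det-tr-Z-rank}, \eqref{eq:vky-ldt-vkz/vky-upper-bound-by-eta}, the exponential form~\eqref{eq:exponential-form-log-det-cone}, and the $\limsup$ bound on $-\bm{v}_y^k\log\tau^k$). The only noteworthy differences are organizational: you invoke Proposition~\ref{prop:psd-error-bound} once upfront to replace $\|\bm{v}_Z^k\|_F$ by $C_P\tau^k$, whereas the paper writes $\|\bm{v}_Z^k\|_F$ first and bounds it separately in each subcase; and in the subcase $\bm{v}_y^k\ldt(\bm{v}_Z^k/\bm{v}_y^k)\ge 0$ you bound $\|\bm{w}^k-\bm{u}^k\|$ by $C_\#(\tau^k)^{1/2}$ and use $t^{1/2}\le\mathfrak{g}_{\log}(t)$, while the paper bounds it by $C_\infty(-\tau^k\log\tau^k)$ and uses the last inequality in~\eqref{eq:relationships-t-a-tlogt-1/logt}. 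Both variants work and yours is slightly shorter.
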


\begin{proof}
	If \( \gamma _{\bm{n}, \eta } = 0 \), in view of~\cite[Lemma 3.12]{LiLoPo20}, there exists \( \widehat{\bm{v}} \in \Ff \) and sequences \( \{\bm{v}^k\}, \{\bm{w}^k\}, \{\bm{u}^k\} \) being defined as those therein, with the cone being \( \Kld \) and the face being \( \Ff \), such that~\eqref{eq:for-contradiction} holds with \( \mathfrak{g} = \mathfrak{g}_{\log} \) as in~\eqref{eq:g-log}.
	Note that \( \{\bm{v}^k\} \subset \partial \Kld \cap B(\eta) \setminus \Ff \) means that we need to consider the following two cases:
	\begin{enumerate}[(i)]
		\item\label{item:F-inf-ny=0-vk-not-F-d} \( \bm{v}^k \in \partial \Kld \cap B(\eta) \setminus \Fd \) infinitely often;
		\item\label{item:F-inf-ny=0-vk-F-d} \( \bm{v}^k \in \Fd \cap B(\eta) \setminus \Ff \) for all large \( k \).
	\end{enumerate}

	(\ref{item:F-inf-ny=0-vk-not-F-d}) Without loss of generality, we  assume that \( \bm{v}^k \in \partial \Kld \cap B(\eta) \setminus \Fd \) for all \( k \) by passing to a subsequence if necessary, that is,
	\[ \bm{v}^k = (\bm{v}_y^k \ldt (\bm{v}_Z^k / \bm{v}_y^k), \bm{v}_y^k , \bm{v}_Z^k ) \text{ with } \bm{v}_y^k > 0, \bm{v}_Z^k \succ 0 \quad \text{for all }k. \]
	Then \( \langle \bm{n}, \bm{v}^k \rangle = \tr(\bm{v}_Z^k \bm{n}_Z) \geq 0 \) and
	\begin{equation}
    \label{eq:F-d-case-1}
    \| \bm{w}^k - \bm{v}^k \| = \frac{\langle \bm{n}, \bm{v}^k \rangle }{\| \bm{n} \| } = \frac{\tr(\bm{v}_Z^k \bm{n}_Z)}{\| \bm{n} \| }.
  \end{equation}
	In addition, by Lemma~\ref{lemma:norm-wk-uk}, we obtain that for all \( k \),
	\begin{equation}\label{eq:norm-wk-uk-log-type-F-f}
		\| \bm{w}^k - \bm{u}^k \| \leq \dist(\bm{v}^k, \Ff) \leq ( \bm{v}_y^k \ldt (\bm{v}_Z^k / \bm{v}_y^k) )_+ + \bm{v}_y^k + \| \bm{v}_Z^k  \| _F.
	\end{equation}

	Let \( \tau ^k := \tr(\bm{v}_Z^k \bm{n}_Z) \).

	If \( \bm{v}_y^k \ldt (\bm{v}_Z^k / \bm{v}_y^k) \geq 0 \) infinitely often, then by passing to a subsequence if necessary, we may assume that \( \det (\bm{v}_Z^k / \bm{v}_y^k) \geq 1 \) for all \( k \).
  Hence we have \( (\bm{v}_y^k)^d \leq \det(\bm{v}_Z^k) \).
	Thus, combining Lemma~\ref{lemma:rel-det-tr-Z-rank} with \( \rank(\bm{n}_Z) = d \), we obtain that for all \( k \),
	\begin{equation}\label{eq:rel-vky-det-ldt>0-d}
		\bm{v}_y^k \leq (\det(\bm{v}_Z^k))^{\frac{1}{d}} \leq C\tau^k.
	\end{equation}
	Then, for sufficiently large \( k \),
	\begin{equation}\label{eq:up-bd-wk-uk-F-inf}
		\begin{aligned}
			&\| \bm{w}^k - \bm{u}^k \|  \overset{(\text{a})}{\leq }d|\log(\eta)| \bm{v}_y^k - d \bm{v}_y^k \log (\bm{v}_y^k) + \bm{v}_y^k  + \tr(\bm{v}_Z^k)                                  \\
			                          & =(d|\log(\eta)| + 1)\bm{v}_y^k - d\bm{v}_y^k \log (\bm{v}_y^k) + \frac{1}{\lambda _{\min }(\bm{n}_Z)} \lambda _{\min }(\bm{n}_Z) \tr(\bm{v}_Z^k)      \\
			                          & \overset{(\text{b})}{\leq } (Cd|\log(\eta)| + C)\tau ^k - Cd\tau ^k\log (C\tau^k) + \frac{1}{\lambda _{\min }(\bm{n}_Z)}\tau ^k                       \\
			                          & = (Cd|\log(\eta)| + C - Cd\log(C))\tau ^k - Cd\tau ^k\log (\tau^k) + \frac{1}{\lambda _{\min }(\bm{n}_Z)}\tau ^k                                      \\
			                          & \overset{(\text{c})}{\leq } \left(\Big|Cd|\log(\eta)|  + C - Cd\log(C)\Big| + Cd + \frac{1}{\lambda _{\min }(\bm{n}_Z)}\right) (-\tau^k \log(\tau^k)) \\
			                          & = C_{\infty } (-\tau^k \log(\tau^k)),
		\end{aligned}
  \end{equation}
	where \( C_{\infty } := \Big|Cd|\log(\eta)|  + C - Cd\log(C)\Big| + Cd + \frac{1}{\lambda _{\min }(\bm{n}_Z)} > 0 \), (a) comes from~\eqref{eq:norm-wk-uk-log-type-F-f} and~\eqref{eq:vky-ldt-vkz/vky-upper-bound-by-eta}, (b) holds because of \eqref{eqn:traceproperties}, ~\eqref{eq:rel-vky-det-ldt>0-d} and the fact that \( x \mapsto -x\log (x) \) is increasing for all sufficiently small positive \( x \), (c) is true because \( x\leq -x\log(x) \) for sufficiently small \( x \) and \( \tau^k \rightarrow 0 \) because \( \bm{v}_Z^k \rightarrow 0 \).

	Hence,
	\[
    \begin{split}
      & \lim_{k \to \infty} \frac{\mathfrak{g}_{\log}(\| \bm{w}^k - \bm{v}^k \| )}{\| \bm{w}^k - \bm{u}^k \| } \geq \liminf_{k \to \infty} -\frac{1}{\log \left( \frac{\tau^k}{\| \bm{n} \|} \right)} \frac{1}{C_{\infty }(-\tau^k \log(\tau^k))} \\
      \geq & \lim_{k \to \infty} \frac{- \tau ^k \log (\tau^k)}{C_{\infty } (-\tau^k \log(\tau^k))} = \frac{1}{C_{\infty}} > 0,
    \end{split}
  \]
	where the first inequality comes from \eqref{eq:F-d-case-1} and \eqref{eq:up-bd-wk-uk-F-inf}, the second inequality comes from~\eqref{eq:relationships-t-a-tlogt-1/logt} (with \( \alpha = 1 \) and \( s = \frac{1}{\|\bm{n} \|} \)).
	This contradicts~\eqref{eq:for-contradiction} with \( \mathfrak{g}_{\log} \) in place of \( \mathfrak{g} \) and hence this case cannot happen.

	If \( \bm{v}_y^k \ldt ( \bm{v}_Z^k  / \bm{v}_y^k  ) < 0 \) infinitely often, then by passing to a subsequence if necessary, we may assume that that \( \bm{v}_y^k \ldt ( \bm{v}_Z^k  / \bm{v}_y^k  ) < 0 \) for all \( k \).
  Moreover, recalling the exponential form of \( \Kld \) in~\eqref{eq:exponential-form-log-det-cone}, we have \( ( \bm{v}_y^k  )^d e^{\bm{v}_x^k / \bm{v}_y^k } = \det( \bm{v}_Z^k  ) \) for all \( k \).
	Upon invoking Lemma~\ref{lemma:rel-det-tr-Z-rank} with \( \rank(\bm{n}_Z) = d \), we  then see that for all \( k \),
	we have \[ \bm{v}_y^k e^{\bm{v}_x^k / (d\bm{v}_y^k) } = (\det(\bm{v}_Z^k))^{\frac{1}{d}} \leq C  \tau ^k . \]
	Thus, by taking the logarithm on both sides, the above inequality becomes
	\[ \log (\bm{v}_y^k) + \frac{\bm{v}_x^k}{d \bm{v}_y^k} \leq  \log (C) + \log (\tau^k). \]
	Since \( \bm{v}_y^k \to 0, \,\tau ^k \to 0 \) and $\{\bm{v}_y^k\}$, $\{\tau^k\}$ are positive sequences, we note that \( -\bm{v}_y^k \log (\tau^k) > 0 \) for all large \( k \).
	After multiplying \( -\bm{v}_y^k  \) on both sides of the above display and rearranging terms, we see that for all large \( k \),
	\[ 0 < -\bm{v}_y^k \log (\tau^k) \leq -\bm{v}_y^k \log (\bm{v}_y^k) - \frac{\bm{v}_x^k}{d} + \log (C) \bm{v}_y^k. \]
	Then, by passing to the limit on both sides of the above display, we obtain that
	\begin{equation}\label{eq:lim-vky-log(tauk)-d}
    \begin{split}
      0 & \leq \limsup_{k \to \infty} -\bm{v}_y^k \log (\tau^k) \leq \limsup_{k \to \infty} -\bm{v}_y^k \log (\bm{v}_y^k) - \frac{\bm{v}_x^k}{d} + \log (C) \bm{v}_y^k \\
      & = -\lim_{k \to \infty} \frac{\bm{v}_x^k}{d} = -\frac{\widehat{\bm{v}}_x}{d}.
    \end{split}
	\end{equation}
	Note also that since \( \bm{n}_Z \) has full rank, we have upon invoking the equivalence in \eqref{eqn:traceproperties} that \( \{\bm{n}_Z\}^\perp \cap \mathcal{S}_+^d = \{\bm{0} \} \).
  Then Proposition~\ref{prop:psd-error-bound} guarantees that \( C_P \tau^k \geq \|\bm{v}_Z^k\|_F \).

	Therefore, altogether we  conclude that
	\[
		\begin{aligned}
			 & \quad \lim_{k \to \infty} \frac{\mathfrak{g}_{\log}(\| \bm{w}^k - \bm{v}^k \| )}{\| \bm{w}^k - \bm{u}^k \| } \overset{(\text{a})}{\geq} \liminf_{k \to \infty}  -\frac{1}{\log (\tau^k) - \log (\| \bm{n} \| )}\frac{1}{\bm{v}_y^k + C_P\tau^k} \\
			 & \geq \liminf_{k \to \infty} \frac{1}{\log (\| \bm{n} \| )(\bm{v}_y^k + C_P\tau^k) - \bm{v}_y^k \log (\tau^k) - C_P\tau^k\log (\tau^k)} \overset{(\text{b})}{\geq } \lim_{k \to \infty} \frac{-d}{\bm{v}_x^k } \in (0,\infty],
		\end{aligned}
	\]
	where (a) is true owing to~\eqref{eq:F-d-case-1}, \eqref{eq:norm-wk-uk-log-type-F-f}, (b) comes from~\eqref{eq:lim-vky-log(tauk)-d}, \( \tau^k \log (\tau^k) \to 0 \) and \( \bm{v}_y^k + C_P\tau^k \to 0 \), the last inequality holds because \( \widehat{\bm{v}}_x \leq 0 \).
	The above display contradicts~\eqref{eq:for-contradiction} with \( \mathfrak{g}_{\log} \) in place of \( \mathfrak{g} \) and hence this case cannot happen.

	(\ref{item:F-inf-ny=0-vk-F-d}) Analogously to the proof of Theorem~\ref{thm:log-type-error-bound-F-3}, by Lemma~\ref{lemma:ny>=0-vky=0-Holderian}, case (\ref{item:F-inf-ny=0-vk-F-d}) cannot happen.

	Therefore, we  obtain that \( \gamma _{\bm{n}, \eta } \in (0,\infty] \). Using this together with~\cite[Theorem 3.10]{LiLoPo20}, we deduce that~\eqref{eq:log-type-error-bound-F-3} holds.
\end{proof}

\begin{remark}[Tightness of~\eqref{eq:log-type-error-bound-F-inf}]\label{remark:tightness-F-f}
	Let \( \bm{n} = (0, 0, \bm{n}_Z) \) with \( \bm{n}_Z \succ 0 \).
	Then, \( \Ff = \{\bm{n}\}^{\perp} \cap \Kld \).
	Consider the same sequences \( \{\bm{v}^k\}, \{\bm{w}^k\}, \{\bm{u}^k\} \) in Remark~\ref{remark:tightness-log-type-F-3}, i.e., for every \( k \),
	\[ \bm{v}^k = (-1, 1 / k, I_d / (k e^{\frac{k}{d}})), \quad \bm{w}^k = (-1, 1 / k, \bm{0}), \quad \bm{u}^k = (-1, 0, \bm{0}). \]
	Note that there exists \( \eta > 0 \) such that \( \bm{w}^k \in \{\bm{n}\}^{\perp} \cap B(\eta), \, \bm{v}^k \in \Kld \) and \( \bm{u}^k = P_{\Ff}(\bm{w}^k) \) for any \( k \).
	Therefore, applying~\eqref{eq:log-type-error-bound-F-inf}, there exists \( \kappa _B > 0 \) such that
	\[ \frac{1}{k} = \dist(\bm{w}^k, \Ff) \leq \kappa _B \mathfrak{g}_{\log } (\dist(\bm{w}^k, \Kld)) \leq \kappa _B \mathfrak{g}_{\log} \left( \frac{\sqrt{d}}{ke^{\frac{k}{d}}} \right) \quad \forall k \in \mathbb{N}. \]
	In view of the definition of \( \mathfrak{g}_{\log} \) (see~\eqref{eq:g-log}) and its monotonicity, for large enough \( k \) we have
	\[ \frac{1}{k} = \dist(\bm{w}^k, \Ff) \leq \kappa _B \mathfrak{g}_{\log} (\dist(\bm{w}^k, \Kld)) \le \frac{\kappa_B}{\log k + (k/d) - \log\sqrt{d}}\leq \kappa_B\frac{2d}{k}. \]
	Consequently, it holds that for all sufficiently large \( k \),
	\[ \frac{1}{2d} \leq \frac{\dist(\bm{w}^k, \Ff)}{\mathfrak{g}_{\log} (\dist(\bm{w}^k, \Kld))} \leq \kappa _B. \]
	Similar to the argument in Remark~\ref{remark:tightness-F-d}, we conclude that the choice of \( \mathfrak{g}_{\log}  \) is tight.
\end{remark}

Using Theorem~\ref{thm:Lipschitz-error-bound-F-inf-n-y>0} and Theorem~\ref{thm:log-type-error-bound-F-inf} in combination with
\cite[Lemma~3.9]{LiLoPo20}, we deduce the following one-step facial residual function for \( \Kld \) and \( \bm{n} \).

\begin{corollary}\label{corollary:FRFs-F-inf}
	Let \( \bm{n} = (0, \bm{n}_y, \bm{n}_Z) \in \partial \Kld^{*} \) with \( \bm{n}_y \geq 0\) and \(\bm{n}_Z \succ 0 \) such that \( \Ff = \Kld \cap \{\bm{n}\}^{\perp }  \).
	\begin{enumerate}[(i)]
		\item If \( \bm{n}_y > 0 \), let \( \gamma _{\bm{n}, t } \) be as in~\eqref{eq:def-gamma} with $\stdFace = \Ff$ and  \( \mathfrak{g} = | \cdot  | \).
		      Then the function
		      \[ \psi _{\mathcal{K}, \bm{n}}(\epsilon, t) := \max \left\{\epsilon , \epsilon / \| \bm{n} \| \right\} + \max \left\{2, 2\gamma _{\bm{n}, t}^{-1}\right\} \left(\epsilon + \max\left\{\epsilon , \epsilon / \| \bm{n} \| \right\}\right) \]
		      is a one-step facial residual function for \( \Kld  \) and \( \bm{n} \).
		\item If \( \bm{n}_y = 0 \), let \( \gamma _{\bm{n}, t } \) be as in~\eqref{eq:def-gamma} with $\stdFace = \Ff$ and  \( \mathfrak{g}_{\log} \) defined in~\eqref{eq:g-log}.
		      Then the function
		      \[ \psi _{\mathcal{K}, \bm{n}}(\epsilon, t) := \max \left\{\epsilon , \epsilon / \| \bm{n} \| \right\} + \max \left\{2, 2\gamma _{\bm{n}, t}^{-1}\right\} \mathfrak{g}_{\log}\left(\epsilon + \max\left\{\epsilon , \epsilon / \| \bm{n} \| \right\}\right) \]
		      is a one-step facial residual function for \( \Kld  \) and \( \bm{n} \).
	\end{enumerate}
\end{corollary}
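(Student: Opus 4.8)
The plan is to read off both one-step facial residual functions directly from the error bounds already established in Theorems~\ref{thm:Lipschitz-error-bound-F-inf-n-y>0} and~\ref{thm:log-type-error-bound-F-inf}, feeding them into the general conversion mechanism of \cite[Lemma~3.9]{LiLoPo20}. This is the very same argument that produced Corollaries~\ref{corollary:FRFs-F-d} and~\ref{corollary:FRFs-F-3} from their respective theorems; only the inputs (the face $\Ff$, the choice of $\mathfrak{g}$, and the resulting error-bound modulus) change.

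Concretely, I would recall how \cite[Lemma~3.9]{LiLoPo20} operates. Given $\bm{n}\in\partial\Kld^{*}\setminus\{\bm0\}$ with $\Ff=\Kld\cap\{\bm{n}\}^{\perp}$, and given that Theorem~\ref{thm:Lipschitz-error-bound-F-inf-n-y>0} or Theorem~\ref{thm:log-type-error-bound-F-inf} guarantees $\gamma_{\bm{n},\eta}\in(0,\infty]$ for every $\eta>0$ together with the restricted bound $\dist(\bm{q},\Ff)\le\max\{2,2\gamma_{\bm{n},\eta}^{-1}\}\,\mathfrak{g}(\dist(\bm{q},\Kld))$ on $\{\bm{n}\}^{\perp}\cap B(\eta)$, one upgrades to a bound valid on all of $\Span\Kld$. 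For $\bm{x}$ with $\dist(\bm{x},\Kld)\le\epsilon$ and $\langle\bm{x},\bm{n}\rangle\le\epsilon$, set $\bm{q}=P_{\{\bm{n}\}^{\perp}}(\bm{x})$; since $\bm{n}\in\Kld^{*}$, any $\bm{p}\in\Kld$ within $\epsilon$ of $\bm{x}$ has $\langle\bm{p},\bm{n}\rangle\ge0$, so $|\langle\bm{x},\bm{n}\rangle|\le\max\{\epsilon,\epsilon\|\bm{n}\|\}$ and hence $\|\bm{x}-\bm{q}\|\le\max\{\epsilon,\epsilon/\|\bm{n}\|\}$. Two triangle inequalities (using $\dist(\bm{q},\Kld)\le\epsilon+\max\{\epsilon,\epsilon/\|\bm{n}\|\}$ and $\|\bm{q}\|\le\|\bm{x}\|$, so that the restricted bound is invoked with $\eta=\|\bm{x}\|$) then yield exactly a function of the form $\psi_{\mathcal{K},\bm{n}}(\epsilon,t)=\max\{\epsilon,\epsilon/\|\bm{n}\|\}+\max\{2,2\gamma_{\bm{n},t}^{-1}\}\,\mathfrak{g}(\epsilon+\max\{\epsilon,\epsilon/\|\bm{n}\|\})$, which one checks is nonnegative, nondecreasing in each argument, and zero when $\epsilon=0$, hence a legitimate one-step facial residual function in the sense of Definition~\ref{def:one-step-facial-residual-functions}.

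It then remains to plug in the two cases. In case (i), $\bm{n}_y>0$ with $\bm{n}_Z\succ0$ forces $\Ff=\Kld\cap\{\bm{n}\}^{\perp}$, and Theorem~\ref{thm:Lipschitz-error-bound-F-inf-n-y>0} supplies the hypotheses of \cite[Lemma~3.9]{LiLoPo20} with $\mathfrak{g}=|\cdot|$, giving the first displayed $\psi$. In case (ii), $\bm{n}_y=0$ with $\bm{n}_Z\succ0$ likewise gives $\Ff=\Kld\cap\{\bm{n}\}^{\perp}$, and Theorem~\ref{thm:log-type-error-bound-F-inf} supplies the hypotheses with $\mathfrak{g}=\mathfrak{g}_{\log}$ from~\eqref{eq:g-log}, which is nondecreasing, vanishes at $0$, and satisfies $\mathfrak{g}_{\log}\ge|\cdot|^{1/2}$, yielding the second displayed $\psi$. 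I do not anticipate a real obstacle: all analytic content already resides in Theorems~\ref{thm:Lipschitz-error-bound-F-inf-n-y>0} and~\ref{thm:log-type-error-bound-F-inf}, and this corollary is bookkeeping; the only points needing care are checking that $\mathfrak{g}$ (either $|\cdot|$ or $\mathfrak{g}_{\log}$) meets the structural requirements of \cite[Lemma~3.9]{LiLoPo20}, and that the radius $\eta$ of the restricted bound correctly becomes the second argument $t=\|\bm{x}\|$ of $\psi$ — exactly as in the proofs of Corollaries~\ref{corollary:FRFs-F-d} and~\ref{corollary:FRFs-F-3}.
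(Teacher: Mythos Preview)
Your proposal is correct and matches the paper's approach exactly: the paper simply states that the corollary follows by combining Theorems~\ref{thm:Lipschitz-error-bound-F-inf-n-y>0} and~\ref{thm:log-type-error-bound-F-inf} with \cite[Lemma~3.9]{LiLoPo20}, just as was done for Corollaries~\ref{corollary:FRFs-F-d} and~\ref{corollary:FRFs-F-3}. Your write-up actually unpacks the mechanism of \cite[Lemma~3.9]{LiLoPo20} in more detail than the paper does, but the underlying argument is identical.
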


\subsubsection{\( \Fr  \): the family of 1-dimensional faces}\label{subsubsec:FRFs-F-r}
\begin{theorem}[H\"olderian error bound concerning \( \Fr  \)]\label{thm:Holderian-error-bound-F-r}
	Let \( \bm{n} = (\bm{n}_x, \bm{n}_x(\ldt (-\bm{n}_Z / \bm{n}_x ) + d ), \bm{n}_Z) \in \partial\Kld^{*} \) with \( \bm{n}_x < 0\) and \(\bm{n}_Z \succ 0 \) such that \( \Fr = \Kld \cap \{\bm{n}\}^{\perp } \).
	Let \( \eta > 0 \) and let \( \gamma _{\bm{n}, \eta } \) be defined as~\eqref{eq:def-gamma} with $\stdFace = \Fr$ and  \( \mathfrak{g} = | \cdot  | ^{\frac{1}{2}} \).
	Then \( \gamma _{\bm{n}, \eta } \in (0, \infty] \) and
	\begin{equation}\label{eq:Holderian-error-bound-F-r}
		\dist(\bm{q}, \Fr) \leq \max \{2\eta ^{\frac{1}{2}}, 2\gamma _{\bm{n}, \eta }^{-1}\}\cdot (\dist(\bm{q}, \Kld))^{\frac{1}{2}} \quad\quad \forall \bm{q} \in \{\bm{n}\}^{\perp } \cap B(\eta).
	\end{equation}
\end{theorem}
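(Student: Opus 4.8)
The plan is to follow the contradiction scheme already used for $\Fd$, $\Fs$ and $\Ff$. Suppose $\gamma_{\bm n,\eta}=0$; by \cite[Lemma~3.12]{LiLoPo20} there exist $\widehat{\bm v}\in\Fr$ and a sequence $\{\bm v^k\}\subset\partial\Kld\cap B(\eta)\setminus\Fr$ satisfying \eqref{eq:for-contradiction} with $\mathfrak g=|\cdot|^{\frac12}$, where $\bm w^k=P_{\{\bm n\}^\perp}(\bm v^k)$, $\bm u^k=P_{\Fr}(\bm w^k)$ and $\bm u^k\ne\bm w^k$. As usual $\|\bm w^k-\bm v^k\|=\langle\bm n,\bm v^k\rangle/\|\bm n\|$, and by Lemma~\ref{lemma:norm-wk-uk} $\|\bm w^k-\bm u^k\|\le\dist(\bm v^k,\Fr)$, so it suffices to prove $\dist(\bm v^k,\Fr)\le C\langle\bm n,\bm v^k\rangle^{1/2}$ for large $k$, which contradicts \eqref{eq:for-contradiction}. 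Since $\partial\Kld=\Kldie\cup\Kldii$ (see \eqref{eq:kldboundary}), after passing to a subsequence either (i) $\bm v^k\in\Kldii$ for all $k$, or (ii) $\bm v^k\in\Kldie$ for all $k$. This split plays the role of the ``$\bm v^k\in\Fd$'' / ``$\bm v^k\notin\Fd$'' dichotomy in the earlier proofs, but Lemma~\ref{lemma:ny>=0-vky=0-Holderian} is not available here because $\bm n_x<0$.

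Case (i) is easy. Writing $\bm v^k=(\bm v^k_x,0,\bm v^k_Z)$ with $\bm v^k_x\le0$ and $\bm v^k_Z\succeq0$, we get $\langle\bm n,\bm v^k\rangle=\bm n_x\bm v^k_x+\tr(\bm n_Z\bm v^k_Z)\ge(-\bm n_x)|\bm v^k_x|+\lambda_{\min}(\bm n_Z)\|\bm v^k_Z\|_F$ by \eqref{eqn:traceproperties}, while $\dist(\bm v^k,\Fr)\le\|\bm v^k\|\le|\bm v^k_x|+\|\bm v^k_Z\|_F$ since $\bm 0\in\Fr$. Hence $\|\bm w^k-\bm v^k\|/\|\bm w^k-\bm u^k\|\ge\min\{-\bm n_x,\lambda_{\min}(\bm n_Z)\}/\|\bm n\|>0$, and because $\|\bm w^k-\bm v^k\|\to0$ this gives $\mathfrak g(\|\bm w^k-\bm v^k\|)/\|\bm w^k-\bm u^k\|\ge\|\bm w^k-\bm v^k\|/\|\bm w^k-\bm u^k\|$ bounded below, contradicting \eqref{eq:for-contradiction}.

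Case (ii) is the heart of the matter. Set $N:=-\bm n_x\bm n_Z^{-1}\succ0$, so that $\bm f_{{\rm r}}=(\ldt N,1,N)$; write $p^k:=\bm v^k_y>0$ and let $s^k_1,\dots,s^k_d>0$ be the eigenvalues of $N^{-1/2}(\bm v^k_Z/p^k)N^{-1/2}$. Using $\bm v^k_x=p^k\ldt(\bm v^k_Z/p^k)$, the definition of $\bm n_y$, and $\ldt(\bm v^k_Z/p^k)-\ldt N=\sum_i\log s^k_i$, $\tr(N^{-1}\bm v^k_Z)=p^k\sum_i s^k_i$, a direct simplification yields
\[
\langle\bm n,\bm v^k\rangle=(-\bm n_x)\,p^k\sum_{i=1}^d\bigl(s^k_i-1-\log s^k_i\bigr)\;\geq\;0,
\]
which vanishes exactly when $\bm v^k_Z=p^kN$, i.e. $\bm v^k\in\Fr$. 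Comparing $\bm v^k$ with the competitor $p^k\bm f_{{\rm r}}\in\Fr$ gives $\dist(\bm v^k,\Fr)^2\le\|\bm v^k-p^k\bm f_{{\rm r}}\|^2\le(p^k)^2\sum_i\bigl[d(\log s^k_i)^2+\lambda_{\max}(N)^2(s^k_i-1)^2\bigr]$. Invoking $\tfrac{\sum a_i}{\sum b_i}\le\max_i\tfrac{a_i}{b_i}$, the target bound $\dist(\bm v^k,\Fr)^2\le C\langle\bm n,\bm v^k\rangle$ reduces to showing that $p^k\,h(s^k_i)$ stays bounded in $k$ for each $i$, where $h(s):=\tfrac{d(\log s)^2+\lambda_{\max}(N)^2(s-1)^2}{s-1-\log s}$, extended continuously by $h(1)=2d+2\lambda_{\max}(N)^2$. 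The function $h$ is continuous on $(0,\infty)$ but blows up as $s\to0^+$ (like $d|\log s|$) and as $s\to\infty$ (like $\lambda_{\max}(N)^2 s$), so this is not automatic: passing to a subsequence along which $s^k_i\to s^*\in[0,\infty]$, the case $s^*\in(0,\infty)$ is trivial since $h(s^k_i)$ is then bounded and $p^k\le\eta$; the cases $s^*\in\{0,\infty\}$ force $\widehat{\bm v}=\bm 0$ (otherwise $\bm v^k_Z/p^k\to N$ and $s^k_i\to1$), hence $p^k\to0$, and then from $p^k\sum_i s^k_i=\tr(N^{-1}\bm v^k_Z)\to0$ and $p^k\sum_i\log s^k_i=\bm v^k_x-p^k\ldt N\to0$ a squeezing argument produces $p^k s^k_i\to0$ and $p^k|\log s^k_i|\to0$, which outpace the growth of $h$. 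This yields the contradiction, so $\gamma_{\bm n,\eta}\in(0,\infty]$ and \eqref{eq:Holderian-error-bound-F-r} follows from \cite[Theorem~3.10]{LiLoPo20}.

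I expect the main difficulty to lie entirely in Case (ii): extracting the closed-form identity for $\langle\bm n,\bm v^k\rangle$ through the simultaneous diagonalization against $N$, and, above all, the uniform control of $p^k\,h(s^k_i)$ near the degenerate regimes $s^k_i\to0$ and $s^k_i\to\infty$, where one must squeeze the decay rates $p^k s^k_i\to0$ and $p^k|\log s^k_i|\to0$ out of the mere convergence $\bm v^k\to\widehat{\bm v}\in\Fr$.
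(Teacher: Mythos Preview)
Your proposal is correct and follows the same overall scheme as the paper: the contradiction setup via \cite[Lemma~3.12]{LiLoPo20}, the split $\Kldii$/$\Kldie$, and in Case~(ii) the eigenvalue parametrization against $N=-\bm n_x\bm n_Z^{-1}$ (your $s^k_i$ is exactly $e^{t^k_i}$ in the paper's notation, and your identity $\langle\bm n,\bm v^k\rangle=(-\bm n_x)p^k\sum_i(s^k_i-1-\log s^k_i)$ coincides with the paper's \eqref{eq:form-nvkip-F-r}). Case~(i) is handled identically in both.

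The organization of Case~(ii) is genuinely different, and somewhat cleaner. The paper splits into two sub-cases: (I) $\liminf\sum_i h(t^k_i)=0$, where $t^k_i\to0$ and a Taylor expansion at the origin yields a H\"olderian estimate; and (II) $\liminf\sum_i h(t^k_i)<0$, where an auxiliary function $H(\bm t)$ on $\{\|\bm t\|_\infty\ge\epsilon\}$ is introduced and $\inf H>0$ is shown via a further three-way index split ($\zeta^l_i$ converging to a nonzero real, to $0$, or to $\pm\infty$), yielding a Lipschitz estimate. You instead work with the single continuous function $h(s)=\frac{d(\log s)^2+\lambda_{\max}(N)^2(s-1)^2}{s-1-\log s}$ on $(0,\infty)$ and reduce everything to bounding $p^k\max_i h(s^k_i)$; the case $s^*_i\in(0,\infty)$ absorbs the paper's sub-case~(I) and the ``constant'' indices of~(II), while the degenerate regimes $s^*_i\in\{0,\infty\}$ are handled by the observation that they force $\widehat{\bm v}=\bm 0$, after which the squeeze $p^k s^k_i\to0$ (from $p^k\sum_j s^k_j=\tr(N^{-1}\bm v^k_Z)\to0$ with all terms positive) and $p^k|\log s^k_i|\to0$ (first for $s^*_i\in(0,\infty]$ directly, then for $s^*_i=0$ by subtracting those from $p^k\sum_j\log s^k_j=\bm v^k_x-p^k\ldt N\to0$ and using that the remaining terms are all negative) matches the asymptotics of $h$. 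This avoids both the Taylor argument and the function $H$, at the cost of the slightly more delicate sign-splitting in the squeeze for $p^k|\log s^k_i|$ when $s^*_i=0$; the paper's route trades that for a longer case analysis but keeps each step more elementary.
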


\begin{proof}
	If \( \gamma _{\bm{n}, \eta } = 0 \), in view of~\cite[Lemma 3.12]{LiLoPo20}, there exists \( \widehat{\bm{v}} \in \Fr \) and sequences \( \{\bm{v}^k\}, \{\bm{w}^k\}, \{\bm{u}^k\} \) being defined as those therein, with the cone being \( \Kld \) and the face being \( \Fr \), such that~\eqref{eq:for-contradiction} holds with \( \mathfrak{g} = | \cdot | ^{\frac{1}{2}} \).
	We consider two different cases.
	\begin{enumerate}[(i)]
		\item\label{item:vky=0} \( \bm{v}^k \in \Fd \) infinitely often, i.e., \( \bm{v}_y^k = 0 \) infinitely often (wherefore $\widehat{\bm{v}}=\mathbf{0}$);
		\item\label{item:vky>0} \( \bm{v}^k \notin \Fd \) for all large \( k \), i.e., \( \bm{v}_y^k > 0 \) for all large \( k \).
	\end{enumerate}

	(\ref{item:vky=0}) If \( \bm{v}_y^k = 0 \) infinitely often, by extracting a subsequence if necessary, we may assume that
	\[ \bm{v}^k = (\bm{v}_x^k, 0, \bm{v}_Z^k) \text{ with } \bm{v}_x^k \leq 0, \bm{v}_Z^k \succeq 0 \quad \text{for all }k. \]

	Combining this with the definition of \( \bm{n} \), we have
	\begin{align*}
		| \langle \bm{n}, \bm{v}^k \rangle | = & | \bm{n}_x \bm{v}_x^k + \tr(\bm{n}_Z\bm{v}_Z^k) | = -\bm{n}_x | \bm{v}_x^k | + \tr(\bm{n}_Z \bm{v}_Z^k) \geq -\bm{n}_x | \bm{v}_x^k | + \lambda _{\min }(\bm{n}_Z) \tr(\bm{v}_Z^k) \\
		\geq                                   & \min \{-\bm{n}_x , \lambda _{\min }(\bm{n}_Z)\} (| \bm{v}_x^k | + \tr(\bm{v}_Z^k) ) \geq \min \{-\bm{n}_x , \lambda _{\min }(\bm{n}_Z)\} \| \bm{v}^k \|.
	\end{align*}
	Here, we recall that \( \tr(\bm{n}_Z \bm{v}_Z^k) \geq 0, \lambda _{\min }(\bm{n}_Z) > 0, \tr(\bm{v}_Z^k) \geq 0 \).

	Since projections are non-expansive, we have \( \| \bm{w}^k \| \leq \| \bm{v}^k \|  \).
	Moreover, since \( \bm{0} \in \Fr \), we have \( \dist(\cdot, \Fr) \leq \| \cdot  \| \).
	Thus,
	\begin{align*}
		\| \bm{w}^k - \bm{u}^k \| = & \dist(\bm{w}^k, \Fr) \leq \| \bm{w}^k \| \leq \| \bm{v}^k \|                                                                                                                          \\
		\leq                        & \frac{1}{\min \{-\bm{n}_x , \lambda _{\min }(\bm{n}_Z) \}} | \langle \bm{n}, \bm{v}^k \rangle  | = \frac{\| \bm{n} \| }{\min \{-\bm{n}_x , \lambda _{\min }(\bm{n}_Z)\}}  \| \bm{w}^k - \bm{v}^k \|.
	\end{align*}
	This display shows that~\eqref{eq:for-contradiction} for \( \mathfrak{g} = | \cdot | \) does not hold in this case.
	Since $|t|^{1/2} \geq |t|$ holds for small $t > 0$,
	we conclude that \eqref{eq:for-contradiction} for \( \mathfrak{g} = | \cdot |^{1/2} \) does not hold as well.

	(\ref{item:vky>0}) If \( \bm{v}_y^k > 0 \) for all large \( k \), by passing to a subsequence if necessary, we can assume that
	\[ \bm{v}^k = (\bm{v}_y^k \ldt (\bm{v}_Z^k / \bm{v}_y^k), \bm{v}_y^k, \bm{v}_Z^k) \text{ with } \bm{v}_y^k > 0, \bm{v}_Z^k \succ 0, \quad \text{for all }k. \]
	Thus, we have
	\begin{equation}
		\label{eq:norm-wk-vk-F-r}
		\| \bm{w}^k - \bm{v}^k \| = \frac{| \langle \bm{n}, \bm{v}^k \rangle | }{\| \bm{n} \| },
	\end{equation}
	and
  \begin{align}
      & \quad \langle \bm{n}, \bm{v}^k \rangle = \bm{n}_x \bm{v}_y^k \ldt (\bm{v}_Z^k / \bm{v}_y^k) + \bm{n}_x \bm{v}_y^k (\ldt (-\bm{n}_Z / \bm{n}_x ) + d) + \tr(\bm{n}_Z \bm{v}_Z^k)                                                                                                                         \notag \\
      & = \bm{n}_x \bm{v}_y^k \left(\ldt \left( -\frac{\bm{v}_Z^k \bm{n}_Z }{\bm{v}_y^k \bm{n}_x }\right) + d + \tr\left(\frac{\bm{v}_Z^k \bm{n}_Z }{\bm{v}_y^k \bm{n}_x } \right) \right)                                                                                                                      \notag \\
      & = \bm{n}_x \bm{v}_y^k \left(\ldt \left( -\frac{\bm{n}_Z ^{\frac{1}{2}} \bm{v}_Z^k \bm{n}_Z^{\frac{1}{2}} }{\bm{v}_y^k \bm{n}_x }\right) + d + \tr\left(\frac{\bm{n}_Z ^{\frac{1}{2}} \bm{v}_Z^k \bm{n}_Z^{\frac{1}{2}} }{\bm{v}_y^k \bm{n}_x } \right) \right)                                          \notag \\
      & = \bm{n}_x \bm{v}_y^k \sum_{i=1}^d \left(\log\left(\lambda_i \left( -\frac{\bm{n}_Z ^{\frac{1}{2}} \bm{v}_Z^k \bm{n}_Z^{\frac{1}{2}} }{\bm{v}_y^k \bm{n}_x }\right)\right) + 1 + \lambda_i \left( \frac{\bm{n}_Z ^{\frac{1}{2}} \bm{v}_Z^k \bm{n}_Z^{\frac{1}{2}} }{\bm{v}_y^k \bm{n}_x }\right)\right) \notag \\
      & = \bm{n}_x \bm{v}_y^k \sum_{i=1}^d \left( t_i^k + 1 - e^{t_i^k}\right) \geq 0, \label{eq:form-nvkip-F-r}
  \end{align}
	where \( t_i^k := \log \left(\lambda_i \left( -\frac{\bm{n}_Z ^{\frac{1}{2}} \bm{v}_Z^k \bm{n}_Z^{\frac{1}{2}} }{\bm{v}_y^k \bm{n}_x }\right)\right) \) for \( i = 1, 2, \dots, d  \) and \( k \geq 1 \), and the nonnegativity comes from the observation that \( t + 1 - e^t \leq 0 \) for all \( t \in \R \) and  the facts that \( \bm{n}_x < 0 \) and \( \bm{v}_y^k > 0 \); recall that here \( \bm{v}_Z^k \succ 0, \bm{n}_Z \succ 0, \bm{n}_x < 0, \) and \( \bm{v}_y^k > 0\), then \( \lambda_i ( -\frac{\bm{n}_Z ^{\frac{1}{2}} \bm{v}_Z^k \bm{n}_Z^{\frac{1}{2}} }{\bm{v}_y^k \bm{n}_x }) > 0 \) for all \( i \), and hence \( t_i^k \) is well-defined.

	Next, we turn to compute \( \| \bm{w}^k - \bm{u}^k \|  \).
	Using Lemma~\ref{lemma:norm-wk-uk},~\eqref{eq:1d-face-F-r} and~\eqref{eq:def-f-r}, one can see for all \( k \),
	\begin{align*}
		 & \,\quad \| \bm{w}^k - \bm{u}^k \| \leq \dist( \bm{v}^k , \Fr) \overset{(\text{a})}{\leq } \| \bm{v}^k - \bm{v}_y^k \bm{f}_{{\rm r}} \|                       \\
		 & = \| ( \bm{v}_y^k \ldt (\bm{v}_Z^k / \bm{v}_y^k) - \bm{v}_y^k \ldt (- \bm{n}_x \bm{n}_Z^{-1}), 0, \bm{v}_Z^k + \bm{v}_y^k \bm{n}_x \bm{n}_Z ^{-1} )  \|                        \\
		 & \leq \bm{v}_y^k (|\ldt (- (\bm{n}_Z ^{\frac{1}{2}}\bm{v}_Z^k \bm{n}_Z^{\frac{1}{2}}) / (\bm{v}_y^k \bm{n}_x) )| + \| \bm{v}_Z^k / \bm{v}_y^k  + \bm{n}_x \bm{n}_Z ^{-1} \|_F ) \\
		 & \leq \bm{v}_y^k \left( \left(\sum_{i=1}^d | t_i^k |\right) + \| \bm{v}_Z^k / \bm{v}_y^k  + \bm{n}_x \bm{n}_Z ^{-1} \|_F \right),
	\end{align*}
	where (a) holds because \( \bm{v}_y^k \bm{f}_{{\rm r}} \in \Fr \). We remark that \( \bm{v}_Z^k / \bm{v}_y^k  + \bm{n}_x \bm{n}_Z^{-1} \) is a symmetric matrix.
	Let
	\[ A_k := \frac{\bm{v}_Z^k}{\bm{v}_y^k } + \bm{n}_x \bm{n}_Z^{-1}, \quad B := -\bm{n}_x \bm{n}_Z ^{-1}, \quad D_k := \frac{\bm{v}_Z^k \bm{n}_Z}{\bm{v}_y^k \bm{n}_x }, \quad \widehat{D}_k := \frac{\bm{n}_Z^{\frac{1}{2}} \bm{v}_Z^k \bm{n}_Z^{\frac{1}{2}}}{\bm{v}_y^k \bm{n}_x }. \]
	We notice that \( D_k = \bm{n}_Z ^{-\frac{1}{2}} \widehat{D}_k \bm{n}_Z ^{\frac{1}{2}} \) and \( e^{t_i^k} = \lambda _i(-\widehat{D}_k) \) for \( i = 1, 2, \dots, d \) and \( k \geq 1 \).
	Then, we have for all \( k \),\footnote{For \( X \in \R^{n \times n} \), we denote the nuclear norm and spectral norm of \( X \) by \( \| X \|_{*} \) and \( \| X \|_{2} \), respectively.}
	\begin{align*}
		 & \,\quad\| A_k \| _F \leq \| A_k \| _{*} = \| A_k (\bm{n}_Z / \bm{n}_x) (\bm{n}_x \bm{n}_Z^{-1}) \|_{*} = \| (D_k+I)B \| _{*}                                                                                                                                  \\
		 & \overset{(\text{a})}{=} \sup_{\| W \| _{2} \leq 1} \tr(W(D_k+I)B) = \sup_{\| W \| _{2} \leq 1} \tr\left(BW\left(\bm{n}_Z^{-\frac{1}{2}}\widehat{D}_k\bm{n}_Z^{\frac{1}{2}} + I\right)\right)                                                                              \\
		 & = \sup_{\| W \| _{2} \leq 1} \tr\left(\bm{n}_Z^{\frac{1}{2}} BW\bm{n}_Z^{-\frac{1}{2}}(\widehat{D}_k+I)\right) \overset{(\text{b})}{\leq} \| \widehat{D}_k+I \| _{*} \sup_{\| W\| _{2} \leq 1} \| \bm{n}_Z ^{\frac{1}{2}} BW \bm{n}_Z ^{-\frac{1}{2}} \| _{2} \\
		 & = \beta \sum_{i=1}^d | \lambda _i(\widehat{D}_k + I) |   = \beta \sum_{i=1}^d | \lambda _i(\widehat{D}_k) + 1 |   = \beta \sum_{i=1}^d | \lambda _i(-\widehat{D}_k) - 1 | = \beta \sum_{i=1}^d | e^{t_i^k} - 1 |,
	\end{align*}
	where \( \beta := \sup_{\| W \| _{2} \leq 1} \| \bm{n}_Z ^{\frac{1}{2}}BW\bm{n}_Z ^{-\frac{1}{2}} \|_{2} \in (0, \infty) \), and (a) and (b) hold since the dual norm of nuclear norm \( \| \cdot \| _{*}\) is the spectral norm \( \| \cdot \| _{2}\).
	Hence, we obtain that for all \( k \),
	\begin{equation}
    \label{eq:F-r-case-II}
    \| \bm{w}^k - \bm{u}^k \| \leq \bm{v}_y^k \left(\sum_{i=1}^d | t_i^k | + \beta | e^{t_i^k} - 1 |  \right).
  \end{equation}

  Before moving on, we define two auxiliary functions and discuss some useful properties.
  Define
  \begin{equation}
    \label{eq:def-h-t}
    h(t) :=  t + 1 - e^t \quad \text{ and } \quad  g(t) := | t | + \beta | e^t - 1 |.
  \end{equation}
  We observe that
  \begin{equation}
    \label{eq:property-h}
    \begin{aligned}
    h(t) = 0 \Longleftrightarrow t = 0,\\
     \lim_{t \to \infty} h(t)=\lim_{t \to -\infty} h(t) = -\infty,\\
     h'(t) = 1 - e^t,\quad h'(0) = 0,\\
      h''(t) = -e^t,\quad h''(0) = -1.
    \end{aligned}
  \end{equation}
  In addition, \( g(t) \geq 0 \) for all \( t \in \R \) and \( g(t) = 0 \) if and only if \( t = 0 \).

	Now, recall from the setting of \( \{\bm{v}^k\} \) that \( \bm{v}^k \to \widehat{\bm{v}} \) and \( \langle \bm{n}, \bm{v}^k \rangle \to 0 \).
	This and the formula of \( \langle \bm{n}, \bm{v}^k \rangle \) in \eqref{eq:form-nvkip-F-r} reveal that we need to consider the following two cases:
	\begin{enumerate}[(I)]
		\item\label{item:v-bar-y>0} \( \liminf _{k \to \infty } \sum_{i=1}^d h(t_i^k) = 0 \);
		\item\label{item:v-bar-y=0} \( \liminf _{k \to \infty } \sum_{i=1}^d h(t_i^k) \in [-\infty, 0) \).
	\end{enumerate}

  For notational simplicity, we define \( \bm{t}^k := (t_i^k)_{i=1}^d \) for all \( k \).

	(\ref{item:v-bar-y>0})
  Without loss of generality, by passing to a further subsequence, we assume that \( \lim_{k \to \infty} \sum_{i=1}^d h(t_i^k) = 0 \).
	Combining this assumption and the fact that \( h(t) \leq 0 \) for all \( t \in \R \) with \eqref{eq:property-h}, we know that \( \bm{t}^k \to \bm{0} \).
	Now, consider the Taylor expansion of \( h(t) \) at \( t = 0 \), that is,
	\[ h(t) = -0.5t^2 + O(|t|^3), \quad t \to 0. \]
	It follows that there exists \( \epsilon > 0 \) such that for any \( t \) satisfying \( | t | < \epsilon \), \( h(t) \leq -0.25t^2 \leq 0 \).
	Thus, we have for all large \( k \) that,
	\begin{equation}\label{eq:Taylor-expansion-inequality}
		0 \leq \sum_{i=1}^d 0.25(t_i^k)^2 \leq \sum_{i=1}^d | h(t_i^k) |.
	\end{equation}

  We can deduce the lower bound of \( \| \bm{w}^k - \bm{v}^k \| ^{\frac{1}{2}} \) for sufficiently large \( k \) as follows:
	\begin{align*}
		 & \,\quad \| \bm{w}^k - \bm{v}^k \| ^{\frac{1}{2}} \overset{(\text{a})}{=} \frac{| \langle \bm{n}, \bm{v}^k \rangle  | ^{\frac{1}{2}}}{\| \bm{n} \| ^{\frac{1}{2}}} \overset{(\text{b})}{=} \frac{| \bm{n}_x  | ^{\frac{1}{2}} | \bm{v}_y^k  | ^{\frac{1}{2}}}{\| \bm{n} \| ^{\frac{1}{2}}} \left( \sum_{i=1}^d | h(t_i^k) | \right)^{\frac{1}{2}}  \\
		 & \overset{(\text{c})}{\geq } \frac{| \bm{n}_x  | ^{\frac{1}{2}} | \bm{v}_y^k  | ^{\frac{1}{2}}}{2\| \bm{n} \| ^{\frac{1}{2}}} \left(\sum_{i=1}^d (t_i^k)^2 \right)^{\frac{1}{2}} \overset{(\text{d})}{\geq } \frac{(| \bm{n}_x  | | \bm{v}_y^k |) ^{\frac{1}{2}}}{2(d\| \bm{n} \|) ^{\frac{1}{2}}} \left(\sum_{i=1}^d | t_i^k| \right),
	\end{align*}
	where (a) comes from~\eqref{eq:norm-wk-vk-F-r}, (b) comes from~\eqref{eq:form-nvkip-F-r} and~\eqref{eq:def-h-t}, (c) holds by~\eqref{eq:Taylor-expansion-inequality}, (d) comes from the root-mean inequality.

	Next, to derive a bound for \( \| \bm{w}^k - \bm{u}^k \| \), we shall relate \( |e^{t_i^k} - 1| \) to \( |t_i^k| \).
	To this end, notice that \( \lim_{t \to  0} (e^t - 1) / t = 1. \)
	Then, there exists \( C_1 > 0 \) such that for any \( i = 1, 2, \dots , d \),
	\[ | e^{t_i^k} - 1 | \leq C_1 | t_i^k | \quad \text{ for sufficiently large } k. \]
	Therefore, by \eqref{eq:F-r-case-II}, for all sufficiently large \( k \),
	\[ \| \bm{w}^k - \bm{u}^k \| \leq \bm{v}_y^k (\beta C_1 + 1) \sum_{i=1}^d | t_i^k |.  \]
	We thus conclude that
	\[
    \begin{split}
      \lim_{k \to \infty} \frac{\| \bm{w}^k - \bm{v}^k \| ^{\frac{1}{2}}}{\| \bm{w}^k - \bm{u}^k \| } \geq & \liminf_{k \to \infty} \frac{(| \bm{n}_x  | | \bm{v}_y^k  |) ^{\frac{1}{2}}}{ 2(d\| \bm{n} \| )^{\frac{1}{2}}} \frac{(\sum_{i=1}^d | t_i^k | )}{\bm{v}_y^k (\beta C_1 + 1) (\sum_{i=1}^d | t_i^k |) } \\
      \overset{(\text{a})}{\geq} & \frac{| \bm{n}_x  | ^{\frac{1}{2}}}{2(d\| \bm{n} \| \eta )^{\frac{1}{2}} (\beta C_1 + 1)} > 0,
    \end{split}
	\]
  where (a) holds since \( 0 < \bm{v}_y^k \leq \| \bm{v}^k \| \leq \eta \).
	This contradicts~\eqref{eq:for-contradiction} with \( | \cdot |^{\frac{1}{2}} \) in place of \( \mathfrak{g} \) and hence this case cannot happen.

	(\ref{item:v-bar-y=0})
	In this case, in view of \eqref{eq:property-h}, by passing to a further subsequence if necessary, we can assume that there exist \( \epsilon > 0 \) and \( i_0 \) such that \( |t_{i_0}^k| \geq \epsilon \) for all large \( k \), that is, \( g(t_{i_0}^k) > 0 \) for all large \( k \).
	Then, \( \| \bm{t}^k \| _{\infty } \geq \epsilon  \) for all large \( k \).
	Now, consider the following function
	\[ H(\bm{t}) :=
		\begin{cases}
			\frac{\sum_{i=1}^d | h(t_i) |  }{\sum_{i=1}^d g(t_i) } & \text{ if } \| \bm{t} \| _{\infty } \geq \epsilon, \\
			\infty                                                 & \text{otherwise},
		\end{cases}
	\]
	where \( h \) is defined as in~\eqref{eq:def-h-t}.
	Since \( \| \bm{t} \| _{\infty } \geq \epsilon  \) implies \( g(t_i) > 0 \) for some \( i \), we see that \( H \) is well-defined.
	Moreover, one can check that \( H \) is lower semi-continuous and never zero.

	We claim that \( \inf H > 0 \).
	Granting this, we have
	\begin{align*}
		 & \,\quad \lim_{k \to \infty} \frac{\| \bm{w}^k - \bm{v}^k \|^{\frac{1}{2}} }{\| \bm{w}^k - \bm{u}^k \| } \geq \liminf_{k \to \infty} \frac{\| \bm{w}^k - \bm{v}^k \| }{\| \bm{w}^k - \bm{u}^k \| } \overset{(\text{a})}{\ge} \liminf_{k \to \infty} \frac{| \bm{n}_x  | }{\| \bm{n} \| } \frac{\sum_{i=1}^d | h(t_i^k) |  }{\sum_{i=1}^d g(t_i^k)} \\
		 & \overset{(\text{b})}{=} \liminf_{k\to \infty } \frac{| \bm{n}_x  | }{\| \bm{n} \| } H(\bm{t}^k) \geq\frac{| \bm{n}_x  | }{\| \bm{n} \| } \inf H > 0,
	\end{align*}
  where (a) comes from \eqref{eq:norm-wk-vk-F-r}, \eqref{eq:form-nvkip-F-r}, \eqref{eq:F-r-case-II} and the definition of \( h \) and \( g \) in \eqref{eq:def-h-t}, (b) holds thanks to the definition of \( H \).
	The above display contradicts~\eqref{eq:for-contradiction} with \( | \cdot |^{\frac{1}{2}} \) in place of \( \mathfrak{g} \) and hence this case cannot happen.
	Therefore, we obtain that \( \gamma _{\bm{n}, \eta } \in (0,\infty] \) with \( \mathfrak{g} = | \cdot  | ^{\frac{1}{2}} \).
	Together with \cite[Theorem 3.10]{LiLoPo20}, we deduce that~\eqref{eq:Holderian-error-bound-F-r} holds.

	Now it remains to show that \( \inf H > 0 \).
	Since \( H \) is lower semi-continuous and never zero, it suffices to show that \( \liminf_{\|\bm{t}\| \to \infty} H(\bm{t}) > 0 \); see this footnote.\footnote{Suppose that $\inf H = 0$. Then, there exists a sequence $\{\bm{\zeta}^l\}$ such that $ H(\bm{\zeta}^l) \to 0$. If $\{\bm{\zeta}^l\}$ is unbounded, we can find a subsequence $\{\bm{\zeta}^{l_k}\}$ such that $\|\bm{\zeta}^{l_k}\|\to \infty$ and $ H(\bm{\zeta}^{l_k}) \to 0$ holds, which would contradict $\liminf_{\|\bm{t}\| \to \infty} H(\bm{t}) > 0$. So $\{\bm{\zeta}^l\}$ must be bounded and passing to a subsequence we may assume it converges to some $\bar{\bm{\zeta}}$. By lower semicontinuity, we have $H(\bar{\bm{\zeta}}) \leq \lim \inf_{\bm{t} \to \bar{\bm{\zeta}}} H(\bm{t}) \leq \lim _{l \to \infty}H(\bm{\zeta}^l)=  0$. However, $H$ is always positive, so this cannot happen either. Therefore, $ \liminf_{\|\bm{t}\| \to \infty} H(\bm{t}) > 0 $ implies $\inf H > 0$.}

	To this end, consider a sequence \( \{\bm{\zeta}^l\} \) such that \( \| \bm{\zeta}^l \| \to \infty  \) and
	\[ \lim_{l \to \infty} H(\bm{\zeta}^l) = \liminf_{\| \bm{t} \| \to \infty } H(\bm{t}), \]
	then there exists at least one \( i_0 \in \{1, 2, \dots ,d\} \) such that \( |\zeta_{i_0}^l| \to \infty \). Consequently, \( | h(\zeta_{i_0}^l) | \to \infty\) and \( g(\zeta_{i_0}^l) \to \infty  \), and so both \( \sum_{i=1}^d | h(\zeta_i^l) | \) and \( \sum_{i=1}^d g(\zeta_i^l)  \) tend to \( \infty  \).
	Passing to a subsequence, we can assume that for each \( i \), \( \lim_{l \to \infty} \zeta_i^l \in  [-\infty, \infty] \) exists and we can split \( \bm{\zeta}^l \) into three parts:
	\begin{enumerate}[(1)]
		\item \( \zeta_i^l \to \overline{\zeta}_i \in \R \setminus \{0\} \), then \( | h(\zeta_i^l) | \to | h(\overline{\zeta}_i) | \neq 0 \), \( g(\zeta_i^l) \to g(\overline{\zeta}_i) \neq 0 \).
		      Denote the set of indices of these components by \( \mathcal{I}_{\bm{\zeta}}^C \) where \( C \) refers to ``constant''.

		      For any \( i \in \mathcal{I}_{\bm{\zeta}}^C \), we have
		      \[ \lim_{l \to \infty} \frac{| h(\zeta_i^l) | }{g(\zeta_i^l)} = \frac{| h(\overline{\zeta}_i)| }{g(\overline{\zeta}_i)} > 0. \]
		      Thus, there exists a constant \( C_C > 0 \) such that for all sufficiently large \( l \) and all \( i \in \mathcal{I}_{\bm{\zeta}}^C \),
		      \[ | h(\zeta_i^l) | \geq C_C g(\zeta_i^l). \]
		\item \( \zeta_i^l \to 0 \), then \( | h(\zeta_i^l) | \to 0 \), \( g(\zeta_i^l) \to 0 \).
		      Denote the set of indices of these components by \( \mathcal{I}_{\bm{\zeta}}^0 \).
		\item \( |\zeta_i^l| \to \infty  \), then \( | h(\zeta_i^l) | \to \infty \), \( g(\zeta_i^l) \to \infty  \).
		      Denote the set of these components by \( \mathcal{I}_{\bm{\zeta}}^{\infty } \).
		      We have \( \mathcal{I}_{\bm{\zeta}}^{\infty } \neq \emptyset \), since otherwise \( \| \bm{\zeta}^l\|  \not\to \infty  \).

		      For any \( i \in \mathcal{I}_{\bm{\zeta}}^{\infty } \), we  notice that
		      \[
\liminf_{l \to \infty} \frac{| h(\zeta_i^l)| }{g(\zeta_i^l)}\ge \min\left\{\liminf_{t\to-\infty}\frac{|h(t)|}{g(t)},\liminf_{t\to\infty}\frac{|h(t)|}{g(t)}\right\}
= \min \left\{1, \frac{1}{\beta }\right\} := \widehat{\beta } > 0.
\]
		      Thus, for all sufficiently large \( l \) and all \( i \in \mathcal{I}_{\bm{\zeta}}^C \),
		      \[ | h(\zeta_i^l) | \geq \frac{\widehat{\beta }}{2} g(\zeta_i^l). \]
	\end{enumerate}
	Combining the above three cases, we obtain
	\begin{align}
			 & \,\quad \liminf_{\| \bm{t} \|  \to \infty } H(\bm{t}) = \lim_{l \to \infty} H(\bm{\zeta}^l)                                                                                                                                                                                                                                                                                \notag\\
			 & \geq \lim_{l \to \infty } \frac{C_C\sum_{i \in \mathcal{I}_{\bm{\zeta}}^C} g(\zeta_i^l) + \frac{\widehat{\beta }}{2} \sum_{i \in \mathcal{I}_{\bm{\zeta}}^{\infty }} g(\zeta_i^l) + \sum_{i \in \mathcal{I}_{\bm{\zeta}}^0} g(\zeta_i^l)}{\sum_{i=1}^d g(\zeta_i^l) } + \frac{\sum_{i \in \mathcal{I}_{\bm{\zeta}}^0} | h(\zeta_i^l) | - \sum_{i \in \mathcal{I}_{\bm{\zeta}}^0} g(\zeta_i^l) }{\sum_{i=1}^d g(\zeta_i^l)} \notag\\
			 & \overset{(\text{a})}{\geq } \min \left\{C_C, \frac{\widehat{\beta }}{2}, 1\right\} > 0,\label{eq:lim-H>0}
  \end{align}
	where (a) comes from the fact that
	\[ \lim_{l \to \infty} \frac{\sum_{i \in \mathcal{I}_{\bm{\zeta}}^0} | h(\zeta_i^l) | - \sum_{i \in \mathcal{I}_{\bm{\zeta}}^0} g(\zeta_i^l) }{\sum_{i=1}^d g(\zeta_i^l)} = 0, \]
	which holds because the numerator tends to 0 while the denominator tends to infinity.
\end{proof}

\begin{remark}[Tightness of~\eqref{eq:Holderian-error-bound-F-r}]\label{remark:tightness-F-r}
	Let \( \bm{n} \) be defined as in Proposition \ref{prop:facial-structure-log-det-cone}.(a)) and
	\[ \bm{v}^k = \left( \ldt (-\bm{n}_x \bm{n}_Z^{-1}) + \frac{1}{k}, 1, e^{\frac{1}{dk}} (-\bm{n}_x \bm{n}_Z^{-1}) \right), \quad \bm{w}^k = P_{\{\bm{n}\}^{\perp } } (\bm{v}^k), \quad \bm{u}^k = P_{\Fr}(\bm{w}^k), \]
	so that \( \Fr = \Kld \cap \{\bm{n}\}^{\perp} \), \( \{\bm{v}^k\} \subset \Kld \) and there exists \( \eta > 0 \) such that \( \{\bm{w}^k\} \subseteq  B(\eta) \).
	Then we have
	\[ \| \bm{w}^k - \bm{v}^k \| = \frac{| \langle \bm{n}, \bm{v}^k \rangle | }{\| \bm{n} \| } = \frac{-\bm{n}_x | \frac{1}{k} + d - de^{\frac{1}{dk}} | }{\| \bm{n} \| }. \]
  For the sake of notational simplicity, we denote \( \xi := \frac{1}{k} \), then
  \begin{equation}\label{eq:form-wk-vk-F-r}
    \| \bm{w}^k - \bm{v}^k \| = \frac{-\bm{n}_x| \xi + d - de^{\frac{\xi}{d}} |}{\| \bm{n} \|}.
  \end{equation}
  Consider the Taylor expansion of \( \xi + d - de^{\frac{\xi}{d}} \) with respect to \( \xi \) at \( 0 \), we have
  \begin{equation}
    \label{eq:Taylor-w-v-xi}
    \xi + d - de^{\frac{\xi}{d}} = \xi + d - d \left( 1 + \frac{\xi}{d} + \frac{\xi^2}{2d^2} \right) + o(\xi^2) = -\frac{\xi^2}{2d} + o(\xi^2), \text{ as } \xi \to 0.
  \end{equation}

	Next, upon invoking the definitions of \( \Fr  \) and \( \bm{f}_{{\rm r}} \) (see~\eqref{eq:1d-face-F-r} and~\eqref{eq:def-f-r}, respectively), we can see that
	\[
		\begin{split}
			&\,\quad \| \bm{v}^k - \bm{u}^k \| ^2 = \dist^2(\bm{v}^k, \Fr) = \min _{y\geq 0} \| \bm{v}^k - y\bm{f}_{{\rm r}} \|^2 \\
			&=\min _{y\geq 0} \left\| \left( (1-y) \ldt (-\bm{n}_x \bm{n}_Z^{-1}) + \xi, 1-y, -(e^{\frac{\xi}{d}} -y)\bm{n}_x  \bm{n}_Z ^{-1} \right)  \right\| ^2\\
      &=\min _{y\geq 0} \Bigg\{ \underbrace{ \left[ (1 - y)\log \det (-\bm{n}_x \bm{n}_Z^{-1}) + \xi \right]^2 + (1 - y)^2 + (e^{\frac{\xi}{d}} - y)^2 \bm{n}_x^2 \| \bm{n}_Z^{-1} \|_F^2}_{F(y)} \Bigg\} \\
		\end{split}
	\]
	For the sake of brevity, we denote \( \mu := \ldt (-\bm{n}_x \bm{n}_Z^{-1})\) and \( \nu  := \bm{n}_x ^2 \| \bm{n}_Z^{-1}  \|_F ^2 \).
  Then we have
  \[
   F(y) = \left( \mu (y - 1) - \xi  \right)^2 + (y - 1)^2 + \nu ( y - e^{\frac{\xi}{d}})^2.
  \]
  Noting
  \[
    \begin{split}
      F'(y) &= 2\mu \left( \mu (y - 1) - \xi  \right) + 2(y - 1) + 2\nu (y - e^{\frac{\xi}{d}}) \\
      & = (2\mu^2 + 2 + 2\nu )y - (2\mu^2 + 2 + 2\mu \xi  + 2\nu e^{\xi / d}),
    \end{split}
  \]
	we know that \( F \) attains its minimum at \( \bar{y} = \frac{\mu^2 + 1 + \mu \xi  + \nu e^{\xi / d}}{\mu^2 + \nu  + 1} \), which is larger than 0 for sufficiently large \( k \) (or, equivalently, sufficiently small \( \xi \)).

  Next we move towards the analysis of \( \| \bm{v}^k - \bm{u}^k \|^2 = F(\bar{y}) \).
  Consider the Taylor expansion of \( \bar{y} - 1 \) and \( \bar{y} - e^{\frac{\xi}{d}} \) with respect to \( \xi \) at \( 0 \), we have that
  \[
   \bar{y} - 1 = \frac{\mu^2 + 1 + \mu \xi  + \nu e^{\xi / d}}{\mu^2 + \nu  + 1} - 1 = \frac{\mu \xi + \nu (e^{\frac{\xi}{d}} - 1)}{\mu ^2 + \nu + 1} = \frac{\mu \xi + \nu \frac{\xi}{d} }{\mu ^2 + \nu + 1} + o(\xi),
  \]
  and
  \[
   \bar{y} - e^{\frac{\xi}{d}} = \bar{y} - 1 - \frac{\xi}{d} + o(\xi) = \left( \frac{\mu + \frac{\nu}{d}}{\mu^2 + \nu + 1} - \frac{1}{d} \right)\xi + o(\xi) = -\left( \frac{\frac{\mu^2}{d} - \mu + \frac{1}{d}}{\mu^2 + \nu + 1} \right)\xi + o(\xi).
  \]
	Then, for all sufficiently large \( k \),
  \begin{align}
    & \| \bm{v}^k - \bm{u}^k \|^2 = F(\bar{y}) \notag\\
    = & \left( \mu (\bar{y} - 1) - \xi  \right)^2 + (\bar{y} - 1)^2 + \nu ( \bar{y} - e^{\frac{\xi}{d}})^2 \notag\\
    = & \left( \frac{\mu^2 + \mu\frac{\nu}{d}}{\mu^2 + \nu + 1} - 1 \right)^2 \xi^2 + \left( \frac{\mu + \frac{\nu}{d}}{\mu^2 + \nu + 1} \right)^2\xi^2 + \nu \left( \frac{\frac{\mu^2}{d} - \mu + \frac{1}{d}}{\mu^2 + \nu + 1} \right)^2 \xi^2 + o(\xi^2) \notag\\
    = & \Bigg[ \underbrace{\left( \frac{\nu(1 - \frac{\mu}{d}) + 1}{\mu^2 + \nu + 1} \right)^2 + \left( \frac{\mu + \frac{\nu}{d}}{\mu^2 + \nu + 1} \right)^2 + \nu \left( \frac{\frac{\mu^2}{d} - \mu + \frac{1}{d}}{\mu^2 + \nu + 1} \right)^2}_{C_{{\rm r}} \geq 0} \Bigg]\xi^2 + o(\xi^2).  \label{eq:F-r-dist-lower-bound}
  \end{align}
  Next we show \( C_{{\rm r}} > 0 \).
  Suppose that \( \frac{\mu^2}{d} - \mu + \frac{1}{d} = 0 \), then\footnote{Note that this quadratic in $\mu$ has real roots because $d\ge 2$; see the discussions following \eqref{eq:exponential-form-log-det-cone-dual}.} \( \mu = \frac{d \pm \sqrt{d^2 - 4}}{2} > 0 \) and \( \mu (1 - \frac{\mu}{d}) = \frac{1}{d} \).
  This implies that \( 1 - \frac{\mu}{d} > 0 \) and hence \( \nu (1 - \frac{\mu}{d}) + 1 > 0 \) thanks to \( \nu > 0 \).
  Therefore, we can see that \( C_{{\rm r}} > 0 \) because either \( \frac{\mu^2}{d} - \mu + \frac{1}{d} \neq 0 \) or \( \nu (1 - \frac{\mu}{d}) + 1 \neq 0 \).

  Using  Lemma \ref{lemma:norm-wk-uk}, \eqref{eq:form-wk-vk-F-r}, \eqref{eq:Taylor-w-v-xi} and \eqref{eq:F-r-dist-lower-bound}, we deduce that
  \begin{align}
    &\,\quad L_{{\rm r}} := \lim_{k \to \infty} \frac{\| \bm{w}^k - \bm{u}^k \|}{\| \bm{w}^k - \bm{v}^k \|^{\frac{1}{2}}} = \lim_{k \to \infty} \frac{\sqrt{\| \bm{v}^k - \bm{u}^k \|^2 - \| \bm{w}^k - \bm{v}^k \|^2}}{\| \bm{w}^k - \bm{v}^k \|^{\frac{1}{2}}} \notag \\
    &= \lim_{k \to \infty} \frac{\| \bm{n} \| ^{\frac{1}{2}}}{\left(-\bm{n}_x| \frac{1}{k} + d - de^{\frac{1}{dk}} | \right)^{\frac{1}{2}}} \frac{\sqrt{\| \bm{n} \| ^2 \| \bm{v}^k - \bm{u}^k \| ^2 - ( \frac{1}{k} + d - de^{\frac{1}{dk}} )^2 \bm{n}_x ^2}}{\| \bm{n} \| }\notag \\
    &= \frac{1}{\| \bm{n} \| ^{\frac{1}{2}}} \lim_{k \to \infty} \sqrt{\frac{\| \bm{n} \|^2 }{-\bm{n}_x } \frac{\| \bm{v}^{k} - \bm{u}^k \|^2 }{ | \frac{1}{k} + d - d e^{\frac{1}{dk}} | } + \Big|\frac{1}{k} + d - de^{\frac{1}{dk}}\Big|\cdot \bm{n}_x } \notag \\
    &= \frac{1}{\| \bm{n} \|^{\frac{1}{2}}} \lim_{\xi \to 0} \sqrt{\frac{\| \bm{n} \|^2}{-\bm{n}_x} \frac{C_{{\rm r}}\xi^2 + o(\xi^2)}{\frac{\xi^2}{2d} + o(\xi^2)} + \bm{n}_x \left( \frac{\xi^2}{2d} + o(\xi^2) \right)} \notag\\
    &= \frac{1}{\| \bm{n} \|^{\frac{1}{2}}} \sqrt{\frac{2\| \bm{n} \|^2C_{{\rm r}}d}{-\bm{n}_x}} = \sqrt{\frac{2\| \bm{n} \|C_{{\rm r}}d}{-\bm{n}_x}} > 0. \label{eq:L-r-tightness-F-r}
  \end{align}

	By contrast, applying~\eqref{eq:Holderian-error-bound-F-r}, there exists \( \kappa _B > 0 \) such that
	\[ \| \bm{w}^k - \bm{u}^k \| = \dist(\bm{w}^k, \Fr) \leq \kappa _B \dist(\bm{w}^k, \Kld)^{\frac{1}{2}} \leq \kappa _B \| \bm{w}^k - \bm{v}^k \| ^{\frac{1}{2}}. \]
  This shows that \( L_{{\rm r}} \leq \kappa_B < \infty \).
	Moreover, from~\eqref{eq:L-r-tightness-F-r}, for large enough \( k \), we have \( \| \bm{w}^k - \bm{u}^k \| \geq \frac{L_{{\rm r}}}{2} \| \bm{w}^k - \bm{v}^k \| ^{\frac{1}{2}} \).
	Therefore, for sufficiently large \( k \), we have
	\[ \frac{L_{{\rm r}}}{2} \| \bm{w}^k - \bm{v}^k \| ^{\frac{1}{2}} \leq \dist(\bm{w}^k , \Fr) \leq \kappa _B \dist(\bm{w}^k, \Kld)^{\frac{1}{2}} \leq \kappa _B \| \bm{w}^k - \bm{v}^k \| ^{\frac{1}{2}}. \]
	Consequently, it holds that for all large enough \( k \),
	\[ \frac{L_{{\rm r}}}{2} \leq \frac{\dist(\bm{w}^k, \Fr)}{\dist(\bm{w}^k, \Kld)^{\frac{1}{2}}} \leq \kappa _B. \]
	Similar to the argument in Remark~\ref{remark:tightness-F-d}, we conclude that the choice of \( | \cdot  | ^{\frac{1}{2}} \) is tight.
\end{remark}

By Theorem~\ref{thm:Holderian-error-bound-F-r}, we have the following one-step facial residual function for \( \Kld \) and \( \bm{n} \).

\begin{corollary}\label{corollary:FRFs-F-r}
	Let \( \bm{n} = (\bm{n}_x, \bm{n}_x(\ldt (-\bm{n}_Z / \bm{n}_x ) + d ), \bm{n}_Z) \in \partial\Kld^{*} \) with \( \bm{n}_x < 0\) and \(\bm{n}_Z \succ 0 \) such that \( \Fr = \Kld \cap \{\bm{n}\}^{\perp }  \).
	Let \( \gamma _{\bm{n}, t } \) be as in~\eqref{eq:def-gamma} with $\stdFace = \Fr$ and \( \mathfrak{g} = | \cdot  | ^{\frac{1}{2}} \).
	Then the function \( \psi _{\mathcal{K}, \bm{n}} : \R_{+} \times \R_{+} \to \R_{+} \) defined by
	\[ \psi _{\mathcal{K}, \bm{n}} (\epsilon, t) := \max \left\{\epsilon, \epsilon / \| \bm{n} \| \right\} + \max \left\{2t ^{\frac{1}{2}}, 2\gamma _{\bm{n}, t}^{-1}\right\} \left(\epsilon  + \max\left\{\epsilon , \epsilon / \| \bm{n} \| \right\}\right)^{\frac{1}{2}} \]
	is a one-step facial residual function for \( \Kld  \) and \( \bm{n} \).
\end{corollary}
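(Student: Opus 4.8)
The plan is to obtain this corollary as an immediate consequence of Theorem~\ref{thm:Holderian-error-bound-F-r} combined with the general mechanism of \cite[Lemma~3.9]{LiLoPo20}, in exactly the same way that Corollary~\ref{corollary:FRFs-F-d} follows from Theorem~\ref{thm:entropic-error-bound-F-d}, that Corollary~\ref{corollary:FRFs-F-3} follows from Theorems~\ref{thm:Holderian-error-bound-F-3-ny>0} and~\ref{thm:log-type-error-bound-F-3}, and that Corollary~\ref{corollary:FRFs-F-inf} follows from Theorems~\ref{thm:Lipschitz-error-bound-F-inf-n-y>0} and~\ref{thm:log-type-error-bound-F-inf}. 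Theorem~\ref{thm:Holderian-error-bound-F-r} already supplies the two required ingredients for $\mathcal{F} = \Fr$ and $\mathfrak{g} = |\cdot|^{\frac12}$: first, that $\gamma_{\bm{n},\eta} \in (0,\infty]$ for every $\eta>0$, and second, the accompanying bound $\dist(\bm{q},\Fr) \le \max\{2\eta^{\frac12}, 2\gamma_{\bm{n},\eta}^{-1}\}\cdot\dist(\bm{q},\Kld)^{\frac12}$ valid on $\{\bm{n}\}^{\perp}\cap B(\eta)$. What remains is to promote this hyperplane-restricted estimate to a one-step facial residual function in the sense of Definition~\ref{def:one-step-facial-residual-functions}, whose defining implication must hold for all $\bm{x}\in\Span\Kld$.

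To carry this out I would invoke \cite[Lemma~3.9]{LiLoPo20} with $\mathfrak{g} = |\cdot|^{\frac12}$, which is nondecreasing, satisfies $\mathfrak{g}(0)=0$, and obeys $\mathfrak{g}\ge|\cdot|^{\alpha}$ with $\alpha=\frac12$, as demanded in \eqref{eq:def-gamma}. Concretely, given $\bm{x}\in\Span\Kld$ with $\dist(\bm{x},\Kld)\le\epsilon$ and $\langle\bm{x},\bm{n}\rangle\le\epsilon$, set $\bm{q}:=P_{\{\bm{n}\}^{\perp}}(\bm{x})$; then $\|\bm{x}-\bm{q}\| = |\langle\bm{x},\bm{n}\rangle|/\|\bm{n}\| \le \epsilon/\|\bm{n}\|$, so by the triangle inequality $\dist(\bm{q},\Kld)\le\epsilon+\max\{\epsilon,\epsilon/\|\bm{n}\|\}$ and $\bm{q}$ lies in a ball whose radius is controlled by $\|\bm{x}\|$ up to the additive term $\epsilon/\|\bm{n}\|$. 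Applying \eqref{eq:Holderian-error-bound-F-r} to $\bm{q}$ and then $\dist(\bm{x},\Fr)\le\|\bm{x}-\bm{q}\|+\dist(\bm{q},\Fr)$ yields precisely $\psi_{\mathcal{K},\bm{n}}(\epsilon,t) = \max\{\epsilon,\epsilon/\|\bm{n}\|\} + \max\{2t^{\frac12},2\gamma_{\bm{n},t}^{-1}\}\bigl(\epsilon+\max\{\epsilon,\epsilon/\|\bm{n}\|\}\bigr)^{\frac12}$, where the second argument $t$ enters as a surrogate for $\|\bm{x}\|$ through the radius $\eta$ in the constant $\max\{2\eta^{\frac12},2\gamma_{\bm{n},\eta}^{-1}\}$. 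Here one uses that $\gamma_{\bm{n},\eta}$ is nonincreasing in $\eta$ (the infimum in \eqref{eq:def-gamma} is over a set that grows with $\eta$), so that $\max\{2t^{\frac12},2\gamma_{\bm{n},t}^{-1}\}$ is nondecreasing in $t$, which is what makes $\psi_{\mathcal{K},\bm{n}}$ satisfy the monotonicity requirement (i) of Definition~\ref{def:one-step-facial-residual-functions}.

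It then only remains to observe that the closed-form $\psi_{\mathcal{K},\bm{n}}$ is nonnegative, nondecreasing in each argument, and satisfies $\psi_{\mathcal{K},\bm{n}}(0,t)=0$ because $\mathfrak{g}(0)=0$; together with the implication established above this verifies all conditions of Definition~\ref{def:one-step-facial-residual-functions}. There is essentially no analytic obstacle at this stage: all of the genuine work specific to $\Fr$ — the case split governed by $\liminf_k\sum_i h(t_i^k)$ with $h(t)=t+1-e^t$, the Taylor-expansion estimate \eqref{eq:Taylor-expansion-inequality}, and the lower-semicontinuity argument proving $\inf H>0$ — has already been absorbed into the proof of Theorem~\ref{thm:Holderian-error-bound-F-r}. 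The only point requiring any care is the bookkeeping of the constants and of the pair $(\epsilon,t)$ when feeding Theorem~\ref{thm:Holderian-error-bound-F-r} into \cite[Lemma~3.9]{LiLoPo20}; since the resulting expression is formally identical to the one in Corollary~\ref{corollary:FRFs-F-3}(i), the argument there can simply be cited verbatim.
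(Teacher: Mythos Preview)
Your proposal is correct and matches the paper's approach: the corollary is obtained by feeding Theorem~\ref{thm:Holderian-error-bound-F-r} (which supplies $\gamma_{\bm{n},\eta}\in(0,\infty]$ and the bound \eqref{eq:Holderian-error-bound-F-r}) into \cite[Lemma~3.9]{LiLoPo20}, exactly as in the earlier corollaries. The paper records this as a single sentence before the statement, so your more detailed unpacking of the mechanism is faithful and complete.
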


\subsection{Error bounds}\label{subsec:error-bounds}
In this subsection, we combine all the previous analysis to deduce the error bound concerning~\eqref{eq:conic-feasibility-problem} with \( \mathcal{K} = \Kld \).
We proceed as follows.

We consider ~\eqref{eq:conic-feasibility-problem} with \( \mathcal{K} = \Kld \) and we suppose~\eqref{eq:conic-feasibility-problem} is feasible.
We also let \( \mathfrak{d} := \dpps(\Kld, \mathcal{L} + \bm{a}) \), where we recall that $\dpps$ denotes the distance to the PPS condition, i.e., the minimum number of facial reduction steps necessary to find a face $\stdFace$ such that $\stdFace$ and ${\cal L}+{\bm a}$ satisfy the PPS condition; see \cite[Section~2.4.1]{L17}.

In particular,  invoking \cite[Proposition~5]{L17}, there exists a chain of faces
\begin{equation}\label{eq:chain}
\mathcal{F}_{\mathfrak{d} + 1} \subsetneq \mathcal{F}_{\mathfrak{d}} \subsetneq \dots \subsetneq \mathcal{F}_2 \subsetneq \mathcal{F}_1 = \Kld
\end{equation}
together with $\bm{n}^1, \ldots, \bm{n}^{\mathfrak{d}} $ satisfying the following properties:
\begin{enumerate}[{\rm (a)}]
	\item For all $i \in \{1,\ldots, \mathfrak{d}\}$ we have
	\begin{align*}
	\bm{n}^{i} \in \mathcal{F}_i^* \cap \mathcal{L}^\perp \cap \{\bm{a}\}^\perp\ \ {\rm and}\ \
	\mathcal{F}_{i+1} = \mathcal{F}_{i} \cap \{\bm{n}^i\}^\perp.
	\end{align*}
		\item $\mathcal{F}_{\mathfrak{d} + 1}\cap( \mathcal{L} + \bm{a}) = \Kld\cap ( \mathcal{L} + \bm{a})$ and
	$\mathcal{F}_{\mathfrak{d} + 1}$ and $\mathcal{L}+\bm{a}$ satisfy the PPS condition.
\end{enumerate}

In order to get the final error bound for \eqref{eq:conic-feasibility-problem} we aggregate the one-step facial residual functions for each of $\mathcal{F}_i$ and $\bm{n}^i$ using the recipe described in \cite[Theorem~3.8]{LiLoPo20}.

So far, we only computed facial residual functions for $\mathcal{F}_1=\Kld$ and
$\bm{n}^1 \in \Kld^*$, but we need the ones for the other $\mathcal{F}_i$ and $\bm{n}^i$.
Fortunately, thanks to the facial structure of
$\Kld$, if $\mathfrak{d} \geq 2$, then $\mathcal{F}_2$ must be a face of the form $\Fd$ or $\Fs$ (see \eqref{eq:d-dim-face-F-d} and \eqref{eq:n-dim-faces-F-n}).
This is because all other possibilities correspond to non-exposed faces or faces of dimension $1$ (for which the PPS condition is automatically satisfied).

$\Fs$ and $\Fd$ are symmetric cones \cite{FK94,FB08} since they are linearly isomorphic to a direct product of $\R_-$ and a face of a positive semidefinite cone (which are symmetric cones on their own right, e.g., \cite[Proposition~31]{L17}).
The conclusion is that for the faces ``down the chain'' we can compute the one-step facial residual functions using the general result for symmetric cones given in \cite[Theorem~35]{L17}.
We note this as a lemma.
\begin{lemma}\label{lem:symmetric}
Let $\bar{\mathcal{F}}$ be a face of $\Fd$.  Let
$\bm{n} \in \bar{\mathcal{F}}^*\cap{\mathcal{L}}^\perp \cap \{\bm{a}\}^\perp$. 	
Then, there exists a constant $\kappa > 0$ such that the function
\[ \psi _{\bar{\mathcal{F}}, \bm{n}} (\epsilon, t) \coloneqq
\kappa \epsilon + \kappa \sqrt{\epsilon t}
\]
is a one-step facial residual function for \( \bar{\mathcal{F}}  \) and \( \bm{n} \).
\end{lemma}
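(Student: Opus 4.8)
The plan is to reduce the statement to the already-available one-step facial residual functions for symmetric cones. First I would record that $\Fd = \R_- \times \{0\} \times \psdc$, regarded inside its own span $\Span\Fd = \R \times \{0\} \times \mathcal{S}^d$ with the inherited inner product, is (isometrically identified with) the product $\R_- \times \psdc$, which is a symmetric cone \cite{FK94,FB08}, being a direct product of symmetric cones. A face of a symmetric cone is again a symmetric cone: a face of a product is a product of faces of the factors, the faces of $\R_-$ are $\{0\}$ and $\R_-$, and every face of $\psdc$ is linearly isomorphic to $\mathcal{S}_+^{r}$ for some $0 \le r \le d$ (see, e.g., \cite[Proposition~31]{L17}). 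Concretely, every face of $\Fd$ has the form $\{0\}\times\{0\}\times\stdFace_0$ or $\R_-\times\{0\}\times\stdFace_0$ with $\stdFace_0 \unlhd \psdc$; in particular $\bar{\mathcal{F}}$ is a symmetric cone sitting inside $\Span\bar{\mathcal{F}}$. This also covers the face $\Fs$ (see \eqref{eq:relationships-faces}), so the lemma applies to every face $\mathcal{F}_i$, $i \ge 2$, appearing in the chain \eqref{eq:chain}.

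For the purposes of this lemma only the membership $\bm{n} \in \bar{\mathcal{F}}^*$ matters (computed inside $\Span\bar{\mathcal{F}}$); the extra condition $\bm{n} \in \mathcal{L}^\perp \cap \{\bm{a}\}^\perp$ is relevant only later, when the $\psi_{\bar{\mathcal{F}},\bm{n}}$ are chained together along \eqref{eq:chain}. I would then simply invoke the general computation of one-step facial residual functions for symmetric cones, \cite[Theorem~35]{L17}: for any symmetric cone $\stdCone$ and any $\bm{n} \in \stdCone^*$, there is a constant $\kappa > 0$ such that $(\epsilon, t) \mapsto \kappa\epsilon + \kappa\sqrt{\epsilon t}$ is a one-step facial residual function for $\stdCone$ and $\bm{n}$. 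Applying this with $\stdCone = \bar{\mathcal{F}}$ yields the assertion.

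The one point needing a sentence of care is that \cite[Theorem~35]{L17} is phrased for a symmetric cone in its standard Euclidean Jordan algebra coordinates, whereas $\bar{\mathcal{F}}$ is presented as a subset of $\Span\bar{\mathcal{F}} \subseteq \R\times\R\times\mathcal{S}^d$. Since a linear isomorphism between ambient spaces that carries one cone, its dual, and the exposing vector to another distorts distances and norms by at most a fixed factor (its condition number), a one-step facial residual function is preserved under such a change of coordinates up to a multiplicative constant, which can be absorbed into $\kappa$; hence the bound transfers to the presentation used here. I do not anticipate a genuine difficulty: the lemma is essentially a repackaging of the symmetric-cone theory, and the only things to watch are that the constants stay finite through the isomorphism and that $\bm{n}$ indeed lies in the dual of $\bar{\mathcal{F}}$ relative to $\Span\bar{\mathcal{F}}$, which is immediate from the hypothesis $\bm{n} \in \bar{\mathcal{F}}^*$.
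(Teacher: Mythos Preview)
Your proposal is correct and takes essentially the same approach as the paper: both arguments observe that $\bar{\mathcal{F}}$, being a face of $\Fd \cong \R_- \times \psdc$, is itself a symmetric cone, and then invoke \cite[Theorem~35]{L17} directly with $\stdCone = \bar{\mathcal{F}}$. The paper's proof is a one-line citation, while you have supplied the supporting details (product-of-faces structure, the role of the linear isomorphism, and why the extra hypothesis $\bm{n}\in\mathcal{L}^\perp\cap\{\bm{a}\}^\perp$ is irrelevant here), but the core idea is identical.
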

\begin{proof}
Follows by  invoking \cite[Theorem~35]{L17} with  $\stdCone \coloneqq \bar{\mathcal{F}}$, $\mathcal{F}\coloneqq \bar{\mathcal{F}}$ and
	$z \coloneqq \bm{n}$.
\end{proof}

We are now positioned to prove our main result in this paper.
\begin{theorem}[Error bounds for~\eqref{eq:conic-feasibility-problem} with \( \mathcal{K} = \Kld \)]\label{thm:error-bounds-log-det-cone}
	Consider~\eqref{eq:conic-feasibility-problem} with \( \mathcal{K} = \Kld \).
	Suppose~\eqref{eq:conic-feasibility-problem} is feasible and let \( \mathfrak{d} := \dpps(\Kld, \mathcal{L} + \bm{a}) \) and consider a chain of faces as in \eqref{eq:chain}. Then \( \mathfrak{d} \leq \min \{d - 1, \dim (\mathcal{L}^{\perp } \cap \{\bm{a}\}^{\perp })\} + 1 \) and
	 the following items hold:
	\begin{enumerate}[(i)]
		\item If \( \mathfrak{d} = 0 \), then \eqref{eq:conic-feasibility-problem} satisfies a Lipschitzian error bound.
		\item If \( \mathfrak{d} = 1 \), we have \( \mathcal{F}_2 = \{\bm{0}\} \) or \( \mathcal{F}_2 = \Fd \) or \( \mathcal{F}_2 = \Fs \) or \( \mathcal{F}_2 = \Fr \) or \( \mathcal{F}_2 = \Ff \).
		      \begin{enumerate}[(a)]
			      \item If \( \mathcal{F}_2 = \{\bm{0}\} \), then \eqref{eq:conic-feasibility-problem} satisfies a Lipschitzian error bound.
			      \item If \( \mathcal{F}_2 = \Fd \), then \eqref{eq:conic-feasibility-problem} satisfies an entropic error bound.\footnote{\label{footnote:def-gs} An entropic error bound is an error bound with the residual function being \( \mathfrak{g}_{{\rm d}} \), see Definition~\ref{def:error-bounds}. A log-type error bound refers to an error bound with the residual function being \( \mathfrak{g}_{\log} \). See \eqref{eq:BS-entropy} and \eqref{eq:g-log} for the definitions of \( \mathfrak{g}_{{\rm d}} \) and \( \mathfrak{g}_{\log} \), respectively.}
			      \item If \( \mathcal{F}_2 = \Fs \) 
			            and \( \bm{n}^1_y > 0 \), then \eqref{eq:conic-feasibility-problem} satisfies a H\"olderian error bound with exponent \( \frac{1}{2} \).
			            If  \( \mathcal{F}_2 = \Fs \) and \( \bm{n}^1_y = 0 \), then \eqref{eq:conic-feasibility-problem} satisfies a log-type error bound.\footref{footnote:def-gs}
			      \item If \( \mathcal{F}_2 = \Fr \), then \eqref{eq:conic-feasibility-problem} satisfies a H\"olderian error bound with exponent \( \frac{1}{2} \).
			      \item If \( \mathcal{F}_2 = \Ff \)
			            and \( \bm{n}^1_y > 0 \), then \eqref{eq:conic-feasibility-problem} satisfies a Lipschitzian error bound.
			            If \( \mathcal{F}_2 = \Ff \) and \( \bm{n}^1_y = 0 \), then \eqref{eq:conic-feasibility-problem} satisfies a log-type error bound.\footref{footnote:def-gs}
		      \end{enumerate}
		\item If \( \mathfrak{d} \geq 2 \) 
		       we have \( \mathcal{F}_2 = \Fd \) or \( \mathcal{F}_2 \) is of form \( \Fs \).
		      Then, an error bound with residual function \( \underbrace{\mathfrak{h} \circ \mathfrak{h} \circ \cdots \circ \mathfrak{h}}_{\mathfrak{d} - 1} \circ \, \bar{\mathfrak{g}} \) holds, where \( \mathfrak{h} = | \cdot | ^{\frac{1}{2}} \) and
		      \begin{equation}
			      \label{eq:def-g-in-error-bound-whole-log-det-cone}
			      \bar{\mathfrak{g}} =
			      			      \begin{cases}
			      \mathfrak{g}_{{\rm d}}  & \text{ if } \mathcal{F}_2 = \Fd,                                                                            \\[0.1cm]
			      \mathfrak{g}_{\log}     & \text{ if } \mathcal{F}_2 = \Fs \text{ and } \bm{n}^1_y = 0, \\[0.1cm]
			      | \cdot |^{\frac{1}{2}} & \text{ if } \mathcal{F}_2 = \Fs \text{ and }  \bm{n}^1_y > 0.
			      \end{cases}
		      \end{equation}
	\end{enumerate}
\end{theorem}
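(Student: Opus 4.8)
The plan is to run the facial-reduction-based recipe of \cite[Section~3]{LiLoPo20}: fix a chain of faces as in \eqref{eq:chain} realizing $\mathfrak{d}=\dpps(\Kld,\mathcal{L}+\bm{a})$, attach to each pair $(\mathcal{F}_i,\bm{n}^i)$ a one-step facial residual function, and then aggregate all of them via \cite[Theorem~3.8]{LiLoPo20} to obtain an error bound for \eqref{eq:conic-feasibility-problem}. All the analytic work is already in place: Proposition~\ref{prop:facial-structure-log-det-cone} classifies the possible $\mathcal{F}_2=\Kld\cap\{\bm{n}^1\}^\perp$ (a proper exposed face of $\Kld$), Corollaries~\ref{corollary:FRFs-F-d},~\ref{corollary:FRFs-F-3},~\ref{corollary:FRFs-F-inf} and~\ref{corollary:FRFs-F-r} supply the one-step facial residual functions for $\Kld$ and $\bm{n}^1$, and Lemma~\ref{lem:symmetric} supplies those for the pairs further down the chain; so the proof is mostly assembly and case distinction.

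First I would settle the bound on $\mathfrak{d}$. Since $\mathcal{F}_2$ is a proper exposed face of $\Kld$, Proposition~\ref{prop:facial-structure-log-det-cone} forces $\mathcal{F}_2\in\{\{\bm{0}\},\Fr,\Ff,\Fd\}$ or $\mathcal{F}_2=\Fs$ for a suitable $\bm{n}^1$. If $\mathcal{F}_2\in\{\{\bm{0}\},\Fr,\Ff\}$, or $\mathcal{F}_2=\Fs$ with $\rank(\bm{n}^1_Z)=d-1$, then $\mathcal{F}_2$ is polyhedral (a ray, a halfline, or $\R_-\times\R_+$), hence it satisfies the PPS condition with every affine subspace and $\mathfrak{d}\le 1$. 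Otherwise $\mathcal{F}_2=\Fd$ or $\mathcal{F}_2=\Fs$ with $\rank(\bm{n}^1_Z)\le d-2$, and $\mathcal{F}_2$ is linearly isomorphic to $\R_-\times\mathcal{S}_+^{m}$ for some $2\le m\le d$; the remaining $\mathfrak{d}-1$ facial reduction steps take place inside this symmetric cone, each one strictly decreasing the rank of the $\mathcal{S}_+^{m}$-block, so $\mathfrak{d}-1\le d-1$ by the known bound on the length of facial reduction chains for positive semidefinite cones (cf.\ \cite{L17}). Observing moreover that the certificates in \cite[Proposition~5]{L17} lie in $\mathcal{L}^\perp\cap\{\bm{a}\}^\perp$ gives $\mathfrak{d}-1\le\dim(\mathcal{L}^\perp\cap\{\bm{a}\}^\perp)$, and combining the two yields $\mathfrak{d}\le\min\{d-1,\dim(\mathcal{L}^\perp\cap\{\bm{a}\}^\perp)\}+1$.

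The case analysis then proceeds as follows. If $\mathfrak{d}=0$, then $\Kld$ and $\mathcal{L}+\bm{a}$ satisfy the PPS condition, so a Lipschitzian error bound holds by \cite[Corollary~3]{BBL99}. If $\mathfrak{d}=1$, then $\mathcal{F}_2$ is one of $\{\bm{0}\},\Fd,\Fs,\Fr,\Ff$ and satisfies the PPS condition with $\mathcal{L}+\bm{a}$; a single use of \cite[Theorem~3.8]{LiLoPo20} turns the appropriate one-step facial residual function into the stated error bound: $\{\bm{0}\}$ (where $\bm{n}^1\in\ri\Kld^*$, which admits a Lipschitzian one-step residual function) gives a Lipschitzian bound; $\Fd$ gives the entropic bound via Corollary~\ref{corollary:FRFs-F-d}; $\Fs$ gives a H\"olderian bound with exponent $\frac{1}{2}$ if $\bm{n}^1_y>0$ and a log-type bound if $\bm{n}^1_y=0$, via Corollary~\ref{corollary:FRFs-F-3}; $\Fr$ gives a H\"olderian bound with exponent $\frac{1}{2}$ via Corollary~\ref{corollary:FRFs-F-r}; and $\Ff$ gives a Lipschitzian bound if $\bm{n}^1_y>0$ and a log-type bound if $\bm{n}^1_y=0$, via Corollary~\ref{corollary:FRFs-F-inf}. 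If $\mathfrak{d}\ge 2$, the discussion above forces $\mathcal{F}_2=\Fd$ or $\mathcal{F}_2$ a non-polyhedral $\Fs$; the one-step facial residual function for $(\Kld,\bm{n}^1)$, read off from Corollary~\ref{corollary:FRFs-F-d} or~\ref{corollary:FRFs-F-3}, is, after fixing a bounded set and using monotonicity in $t=\|\bm{x}\|$, dominated by $\epsilon\mapsto c(\epsilon+\bar{\mathfrak{g}}(c'\epsilon))$ with $\bar{\mathfrak{g}}$ as in \eqref{eq:def-g-in-error-bound-whole-log-det-cone}, while for $i\ge 2$ the face $\mathcal{F}_i$ is a face of $\Fd$ (using $\Fs\properideal\Fd$), so Lemma~\ref{lem:symmetric} gives the one-step residual function $\kappa\epsilon+\kappa\sqrt{\epsilon t}$ for $(\mathcal{F}_i,\bm{n}^i)$, dominated on the bounded set by $\epsilon\mapsto c\sqrt{\epsilon}$, i.e.\ $\mathfrak{h}=|\cdot|^{\frac{1}{2}}$. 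Composing these $\mathfrak{d}$ functions via \cite[Theorem~3.8]{LiLoPo20} and simplifying — using that $\bar{\mathfrak{g}}$, $\mathfrak{g}_{{\rm d}}$, $\mathfrak{g}_{\log}$ and $|\cdot|^{\frac{1}{2}}$ all dominate the identity near $0$, so the ``$\epsilon+(\cdot)$'' summands get absorbed — yields an error bound with residual function $\underbrace{\mathfrak{h}\circ\cdots\circ\mathfrak{h}}_{\mathfrak{d}-1}\circ\,\bar{\mathfrak{g}}$, as claimed.

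The main obstacle is the bookkeeping in this last step rather than any isolated hard point: one must check that the composition produced by \cite[Theorem~3.8]{LiLoPo20} — which at each level also involves $\dist(\bm{x},\mathcal{L}+\bm{a})$ and the norm arguments — truly collapses, after restriction to a bounded set, to the clean nested composition $\mathfrak{h}^{\circ(\mathfrak{d}-1)}\circ\bar{\mathfrak{g}}$, with $\bar{\mathfrak{g}}$ innermost because it belongs to the first step $\Kld\to\mathcal{F}_2$. A secondary point requiring care is the claim that whenever a facial reduction step past $\mathcal{F}_2$ is needed, $\mathcal{F}_2$ must be $\Fd$ or a non-polyhedral $\Fs$; this rests on the fact that the $1$-dimensional faces $\Fr$, $\Ff$, the trivial face, and the polyhedral instances of $\Fs$ all satisfy the PPS condition automatically and hence end the chain.
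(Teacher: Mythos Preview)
Your proposal is correct and follows essentially the same route as the paper's proof: both fix a chain as in \eqref{eq:chain}, identify the possible $\mathcal{F}_2$ via Proposition~\ref{prop:facial-structure-log-det-cone}, use the one-step facial residual functions from Corollaries~\ref{corollary:FRFs-F-d}--\ref{corollary:FRFs-F-r} for the first step and Lemma~\ref{lem:symmetric} for all later steps, and aggregate via \cite[Theorem~3.8]{LiLoPo20}. The only notable differences are cosmetic: the paper obtains the bound $\dpps(\mathcal{F}_2,\mathcal{L}+\bm{a})\le\min\{d-1,\dim(\mathcal{L}^\perp\cap\{\bm{a}\}^\perp)\}$ by a direct black-box appeal to \cite[Proposition~24, Remark~39]{L17} (exploiting that $\mathcal{F}_2$ is a product of a rank-$1$ polyhedral cone and a PSD cone of rank $\le d$), whereas you argue by hand that the polyhedral possibilities for $\mathcal{F}_2$ already satisfy PPS and that each subsequent step drops the rank of the PSD block; and the paper handles $\mathcal{F}_2=\{\bm{0}\}$ via \cite[Proposition~27]{L17} rather than via ``$\bm{n}^1\in\ri\Kld^*$ gives a Lipschitz $\mathds{1}$-FRF'', which amounts to the same thing.
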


\begin{proof}
	Following the discussion so far,
	if $\mathfrak{d} \geq 2$, it is because \( \mathcal{F}_2 = \Fd \) or \( \mathcal{F}_2 \) is of the form  \( \Fs \). Also, as
	remarked previously, in this case, $\mathcal{F}_2$ is a symmetric cone that is a direct product of a polyhedral cone (of rank at most $1$) and a symmetric cone of rank at most $d$.
	Considering the conic feasibility problem with \( \mathcal{K} = \mathcal{F}_2 \), it follows from~\cite[Proposition 24, Remark 39]{L17} that
	\[ \dpps(\mathcal{F}_2, \mathcal{L} + \bm{a}) \leq \min \{d - 1, \dim (\mathcal{L}^{\perp } \cap \{\bm{a}\}^{\perp })\}. \]
	Hence, by adding the first facial reduction step to get \( \mathcal{F}_2 \), we obtain the bound on $\mathfrak{d}$. 
Next, we examine the possibilities for $\mathfrak{d}$.
	

	\begin{enumerate}[$(i)$]
		\item If \( \mathfrak{d} = 0 \), then \eqref{eq:conic-feasibility-problem} satisfies the PPS condition and so a Lipschitzian error bound holds because of \cite[Corollary~6]{BBL99}.
		\item If \( \mathfrak{d} = 1\),
		then the possibilities for $\mathcal{F}_2$ are \( \{\bm{0}\},\, \Fd\), \( \, \Fs \), \( \Ff \) or \( \Fr  \).
		      Then, except for the case $\{\bm{0}\}$, the  error bound then follows from \cite[Theorem 3.8]{LiLoPo20} and
		      the facial residual functions computed in Corollaries~\ref{corollary:FRFs-F-d}, \ref{corollary:FRFs-F-3}, \ref{corollary:FRFs-F-inf} and \ref{corollary:FRFs-F-r}. The case
		      $\mathcal{F}_2=\{\bm{0}\}$ follows from  \cite[Proposition~27]{L17}.
		\item In this case, it must hold that \( \mathcal{F}_2 = \Fd \) or \( \mathcal{F}_2 \) is of form \( \Fs \).
		Both cases, as discussed previously, correspond to symmetric cones.
		      The error bound is obtained by invoking \cite[Theorem 3.8]{LiLoPo20} and
		      using the facial residual functions constructed in  Corollaries~\ref{corollary:FRFs-F-d} and~\ref{corollary:FRFs-F-3} and Lemma~\ref{lem:symmetric}.
	\end{enumerate}
\end{proof}
From Theorem~\ref{thm:error-bounds-log-det-cone} we
see the presence of \textit{non-H\"olderian} behaviour in the cases of entropic and logarithmic error bounds.
A similar phenomenon was observed in the study of error bounds for the exponential cone, see \cite[Section~4.4]{LiLoPo20}.
The analysis of convergence rates of algorithms under non-H\"olderian error bounds is still a challenge (see \cite[Sections~5 and 6]{LL22}) and $\Kld$ is thus another interesting test bed for research ideas on this topic.


\bibliographystyle{abbrvurl}
\bibliography{bib_plain}

\end{document}